\documentclass[10pt]{amsart}
\usepackage{amsmath,amssymb,latexsym,cancel,rotating}
\usepackage{caption}
\usepackage{graphicx,amssymb,mathrsfs,amsmath,color,fancyhdr,amsthm}
\usepackage[all]{xy}
\usepackage{setspace}

\newtheorem{theorem}{Theorem}[section]

\newtheorem{corollary}[theorem]{Corollary}

\newtheorem{proposition}[theorem]{Proposition}

\theoremstyle{definition}
\newtheorem{remark}[theorem]{Remark}

\begin{document}
\title[MHM on quivers and preprojective KHA]
{Associated graded modules of mixed Hodge modules on quivers and preprojective K-theoretic Hall algebra}
\author[Lan]{Yixin Lan}

\address{Max Planck Institute for mathematics,Vivatsgasse,Bonn, 53111,North Rhine-Westphalia,German}
\email{lanyx1997@mpim-bonn.mpg.de (Y.Lan)}

\bibliographystyle{abbrv}

\begin{abstract}
	We study the connection between mixed Hodge modules on quiver moduli spaces and the K-theoretic Hall algebra of preprojective algebras by taking associated graded modules. Furthermore, we show that the functor taking associated graded modules commutes with reflection functors.
\end{abstract}
\maketitle

\begin{spacing}{1}
\section{Introduction}
\subsection{Quivers and quantum groups}
Given a quiver $Q=(Q_{0},Q_{1},s,t)$ which allows loops and multiple arrows, where $Q_{0}$ is the set of vertices, $Q_{1}$ is the set of arrows and $s,t:Q_{1} \rightarrow Q_{0}$ are maps taking the source and target respectively, the Cartan matrix  $A=(a_{i,j})_{i,j \in Q_{0}}$ of $Q$ is given by $a_{i,j}=2\delta_{i,j}- |\{e \in Q_{1}| \{s(e),t(e)\}= \{ i,j \}| $.  The Cartan matrix defines a bilinear form of $\mathbb{Z}Q_{0}$, which is called the symmetric Euler form, and we denote it by $(-,-)$.  If there is no loop at $i \in Q_{0}$, we call $i$ a real vertex, otherwise we call $i$ an imaginary vertex. Denote the set of real and imaginary vertices by $Q^{re}_{0}$ and $Q^{im}_{0}$ respectively, then there is a generalized Kac-Moody algebra $\mathbf{U}_{v}(\mathfrak{n}^{+}_{Q})$ (which is also called Borcherds-Bozec algebra) considered in \cite{Bo15}, whose positive simple roots are parameterized by $I_{\infty}=(Q^{re}_{0}\times \{1\} ) \coprod (Q^{im}_{0}\times \mathbb{Z}_{\geqslant 1}). $ More precisely, $\mathbf{U}_{v}(\mathfrak{n}^{+}_{Q})$ is the $\mathbb{Q}(v)$-algebra generated by $E_{i'}, i' \in I_{\infty}$ with the following relations
\begin{enumerate}
	\item  For $i',j' \in I_{\infty}$, $[E_{i'},E_{j'}]=0$ if $(i',j')=0$.  Here $(-,-)$ is defined by $((i,n),(j,m) )=mn (i,j),$ where the second $(-,-)$ is the symmetric Euler form.
	\item  For $j' \in Q^{re}_{0} \times \{1\}$ and $ i'\neq j' \in I_{\infty}$,
	$$ \sum\limits_{k+l= 1-(j',i')} (-1)^{k} \begin{pmatrix}
		1- (j',i')\\ k
	\end{pmatrix}_{v}   E^{k}_{j'}E_{i'}E^{l}_{j'}=0 ,$$
	 where the quantum binomial coefficients are defined by
	$$[n]_{v}=\frac{v^{n}-v^{-n}}{v-v^{-1}}, \quad [n]_{v}!=\prod\limits_{m=1}^{n} [m]_{v}, \quad \begin{pmatrix}
		m\\ n
	\end{pmatrix}_{v} = \frac{[m]_{v}!}{[n]_{v}! [m-n]_{v}!}.  $$
\end{enumerate}  
The Lusztig integral form $\mathbf{U}^{\mathbb{Z}}_{v}(\mathfrak{n}^{+}_{Q})$ is the $\mathbb{Z}[v^{\pm}]$ subalgebra generated by $E_{i'},i' \in I_{\infty}$ and $E^{(a)}_{i}= \frac{E^{a}_{i}}{[a]_{v}!},  a \geqslant 1, i \in Q^{re}_{0}$. Here we denote $E_{i,1}$ by $E_{i}$ if $i$ is a real vertex.

This paper focuses on the geometric realizations of the inetgral form $\mathbf{U}^{\mathbb{Z}}_{v}(\mathfrak{n}^{+}_{Q})$ and the comparison between them, and similar problem for enveloping algebras has been studied in \cite{HLCC}.  The geometric realizations of $\mathbf{U}^{\mathbb{Z}}_{v}(\mathfrak{n}^{+}_{Q})$ are provided by
\begin{enumerate}
	\item Certain mixed Hodge modules on the stack of representations of $Q$. (See \cite{achar2021perverse} for mixed Hodge modules of quivers of finite type, \cite{L90} \cite{Lu91} and \cite{Bo15} for underlying constructible sheaves of general quivers.)
	\item Certain subalgebra of the K-theoretic Hall algebra of the strongly seminilpotent stack of the preprojective algebra $\Pi_{Q}$ of $Q$. (See \cite{Tu23} for general quivers with potentials and \cite{KHAVV} for preprojective algebras.)
\end{enumerate}

\subsection{Reflection functors and Lusztig symmetries}
For a real vertex $i \in Q_{0}$, Lusztig symmetries $T'_{i,-1},T''_{i,1}$ induce linear isomorphisms between certain quotient $\mathbb{Z}[v^{\pm}]$-modules $$^{(i)}\mathbf{U}^{\mathbb{Z}}_{v}(\mathfrak{n}^{+}_{Q} )= \mathbf{U}^{\mathbb{Z}}_{v}(\mathfrak{n}^{+}_{Q} )/\sum\limits_{a\geqslant 1}E^{(a)}_{i}\mathbf{U}^{\mathbb{Z}}_{v}(\mathfrak{n}^{+}_{Q} ), $$
$$ \mathbf{U}^{\mathbb{Z},(i)}_{v}(\mathfrak{n}^{+}_{Q} )= \mathbf{U}^{\mathbb{Z}}_{v}(\mathfrak{n}^{+}_{Q} )/\sum\limits_{a\geqslant 1}\mathbf{U}^{\mathbb{Z}}_{v}(\mathfrak{n}^{+}_{Q} )E^{(a)}_{i}.$$
Taking $j' \in I_{\infty}$ such that $j' \neq (i,1)$ and $m \in \mathbb{Z}$, the formulas of Lusztig symmetries are given by 
$$ T''_{i,1} (f (i,j',m) ) =f'(i,j',-m-(i,j') ),$$ 
$$T'_{i,-1}(f'(i,j',m))=  f(i,j',-m-(i,j') ),$$
where  $$f(i,j',m)= \sum\limits_{k+l= m} (-1)^{k} v^{-k(1-m-(i,j'))} E^{(k)}_{i}E_{j'} E^{(l)}_{i}, $$
$$f'(i,j',m)= \sum\limits_{k+l= m} (-1)^{k} v^{-k(1-m-(i,j'))} E^{(l)}_{i}E_{j'} E^{(k)}_{i}. $$
We remark that Lusztig defines $T'_{i,-1},T''_{i,1}$ as isomorphisms between submodules in \cite{Lu97} and \cite[38.1]{lusztig2010introduction}, but his definition also works for quotients.

 Another purpose of this paper is to study the realization of Lusztig symmetries in terms of geometric models and the comparison between them. The Lusztig symmetries are realized by 
\begin{enumerate}
	\item The isomorphism between stacks of representations of $Q$ induced from   BGP reflection functors. (See \cite{BGP} for underlying constructible sheaves.)
	\item The isomorphism between $K$-theory of  the strongly seminilpotent stack of the preprojective algebra $\Pi_{Q}$ arising from the tilting theory of  $\Pi_{Q}$ modules. (See \cite{SY13} and \cite{BKT14}). 
\end{enumerate}

\subsection{Main results}
\subsubsection{Geometric realization of the integral form}
For a dimension vector $\alpha \in \mathbb{N}Q_{0}$ of a fixed quiver $Q$, let $\mathbf{E}_{\alpha,Q}$ be the moduli space of representations of $Q$ with dimension vector $\alpha$, and the algebraic group $\mathbf{G}_{\alpha}$ acts on it. Let $\mathcal{Q}_{\alpha}$ be the category of (complexes of) $\mathbf{G}_{\alpha}$-equivariant mixed hodge modules whose underlying complexes of constructible sheaves are those semisimple complexes considered in \cite{Lu91} for loop free case and \cite{Bo15} for quivers with loops. (We remark that the objects considered in this paper differs from those in \cite{Bo15} by Fourier transforms, see Section \ref{BB}.)
\begin{theorem}\label{main1}
	For a given quiver $Q$, let $\mathfrak{N}_{\alpha}=\mathfrak{N}_{\alpha,Q}$ be the strictly seminilpotent stack of preprojective algebra $\Pi_{Q}$, then taking associated graded modules induces an injective morphism of $\mathbb{Z}[v^{\pm}]$-algebras 
	$$ [gr]: \mathcal{K}'_{\oplus} (\mathcal{Q})=\bigoplus\limits_{\alpha \in \mathbb{N}Q_{0}}\mathcal{K}'_{\oplus} (\mathcal{Q}_{\alpha}) \rightarrow  \mathcal{A}^{nil}= \bigoplus\limits_{\alpha \in \mathbb{N}Q_{0}}K_{\mathbb{C}^{\times}} ( \mathfrak{N}_{\alpha}), $$
	where $ \mathcal{K}'_{\oplus} (\mathcal{Q})$ has the (twisted) Lusztig's induction product and $\mathcal{A}^{nil}$ has the (restriction of) twisted K-theoretic Hall product.  The image of $[gr]$ is the subalgebra $\mathcal{A}^{nil}_{zs}$ generated by $[\mathcal{O}_{T^{\ast}_{0} \mathbf{E}_{ai} } ] ,i \in Q_{0},a\geqslant 1$, called the zero spherical subalgebra, which is isomorphic to $\mathbf{U}^{\mathbb{Z}}_{v}(\mathfrak{n}^{+}_{Q})$ via a canonical isomorphism.
\end{theorem}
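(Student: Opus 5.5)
The plan has three steps: define $[gr]$ and locate the supports, show it is an algebra homomorphism, and then identify the image and prove injectivity. The support statement comes from Saito's theory. Every mixed Hodge module $M$ in $\mathcal{Q}_{\alpha}$ has an underlying filtered $\mathcal{D}$-module $(\mathcal{M},F_{\bullet})$, and the associated graded $\mathrm{gr}^{F}\mathcal{M}$ is a $\mathbf{G}_{\alpha}\times\mathbb{C}^{\times}$-equivariant coherent sheaf on $T^{\ast}\mathbf{E}_{\alpha,Q}$. Here $\mathbb{C}^{\times}$ acts by scaling the fibres, and the grading records the filtration index. The support of this sheaf is the characteristic variety of $\mathcal{M}$. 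For the semisimple complexes of Lusztig (loop-free case) and Bozec (quivers with loops), this characteristic variety is contained in the union of conormal bundles to the orbit-type strata. By Lusztig's argument and Bozec's extension, that union is exactly the strongly seminilpotent locus after applying the moment map condition. So $\mathrm{gr}^{F}$ gives a class in $K_{\mathbb{C}^{\times}}(\mathfrak{N}_{\alpha})$, and we extend it additively to complexes.

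For the homomorphism property, the key fact is that Saito's direct images and smooth pullbacks are compatible with $\mathrm{gr}^{F}$ in $K$-theory. Concretely, in the Grothendieck group, $\mathrm{gr}^{F}$ intertwines the proper pushforward $\pi_{\ast}$ and smooth pullback of mixed Hodge modules with the microlocal correspondence $T^{\ast}X \leftarrow X\times_{Y}T^{\ast}Y \rightarrow T^{\ast}Y$, up to a twist by the relative canonical bundle and a power of $v$ (Laumon's formula). Lusztig's induction is the diagram
\[\mathbf{E}_{\beta}\times\mathbf{E}_{\gamma}\leftarrow \mathbf{E}'\rightarrow\mathbf{E}''\rightarrow\mathbf{E}_{\alpha},\]
made of a smooth map, a principal bundle and a proper map. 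The KHA product of preprojective algebras is, by construction (Varagnolo--Vasserot), the microlocalization of this same diagram: the stack of extensions of $\Pi_Q$-modules together with the lci pullback and the proper pushforward. Applying Laumon's compatibility to each arrow, the two products agree after matching the $v$-twists. This is how the twists in both products are to be chosen, and it requires comparing the Euler form shifts with the determinant line twists.

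For the image and injectivity, I compute on generators. The constant Hodge module on $\mathbf{E}_{ai}$ (for $i$ real, or for the imaginary generators $E_{(i,a)}$ in Bozec's description, after the Fourier transform) has graded module $\mathcal{O}_{T^{\ast}_{0}\mathbf{E}_{ai}}$. Since $\mathcal{K}'_{\oplus}(\mathcal{Q})$ is generated by these simple objects and their divided powers, its image is generated by these classes, and this is exactly $\mathcal{A}^{nil}_{zs}$. For the canonical isomorphism with $\mathbf{U}^{\mathbb{Z}}_{v}(\mathfrak{n}^{+}_{Q})$ I use the known isomorphism $\mathcal{K}'_{\oplus}(\mathcal{Q})\cong\mathbf{U}^{\mathbb{Z}}_{v}$ (Lusztig/Bozec, upgraded with Hodge weights) and transport it along $[gr]$. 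Injectivity is then the crucial step. Here I use the basis of simple perverse sheaves $IC(\mathcal{O})$: their characteristic cycles have leading term the conormal closure $\overline{T^{\ast}_{\mathcal{O}}}$ with multiplicity one, and these form a triangular family relative to the irreducible components of $\mathfrak{N}_{\alpha}$. To pass from the characteristic cycle to $K_{\mathbb{C}^{\times}}(\mathfrak{N}_{\alpha})$, I use the pairing with equivariant localization, or the map to Borel--Moore homology of $\mathfrak{N}_{\alpha}$, whose top degree is free on the Lagrangian components. Triangularity then gives linear independence over $\mathbb{Z}[v^{\pm}]$.

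The main obstacle is this last point. The map $K_{\mathbb{C}^{\times}}\to H^{BM}_{top}$ forgets the $v$-grading, so linear independence over $\mathbb{Z}[v^{\pm}]$ needs an extra argument. I would first try specializing at $v=1$, where the classes become characteristic cycles, and then lift the independence by a Nakayama-type argument using the $\mathbb{N}$-grading by the filtration degree.
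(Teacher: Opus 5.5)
You follow the paper on the support statement, on multiplicativity (compatibility of $gr$ with smooth pullback, proper pushforward, inflation and induction, then matching determinant twists), and on describing the image through $[gr][\mathbf{1}_{ai}]=[\mathcal{O}_{T^{\ast}_{0}\mathbf{E}_{ai}}]$.

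\textbf{The gap: injectivity at $v=1$.} Your unitriangularity argument assumes two things. First, that the simple objects of $\mathcal{Q}_{\alpha}$ are the $IC(\mathcal{O})$ for $\mathbf{G}_{\alpha}$-orbits $\mathcal{O}$. Second, that the irreducible components of $\Lambda_{\alpha}$ are the closures $\overline{T^{\ast}_{\mathcal{O}}\mathbf{E}_{\alpha}}$. Both hold only when $\mathbf{G}_{\alpha}$ has finitely many orbits on $\mathbf{E}_{\alpha}$, i.e.\ in Dynkin type, where your argument does work.

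The theorem covers arbitrary quivers, with loops and multiple arrows. There the elements of $\mathcal{P}_{\alpha}$ are intersection complexes of non-orbit $\mathbf{G}_{\alpha}$-stable subvarieties, possibly with nontrivial local systems. The components of $\Lambda_{\alpha}$ are not conormals to orbits. Any comparison between the cycles $CC(L)$ and the components has to go through deep results such as the Kashiwara--Saito and Bozec bijections, not an orbit-closure triangularity. Your support step uses the same finite-type picture, though its conclusion is right by Lusztig's microlocal estimate.

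The paper gets this input from the algebra structure instead. It uses $Ch_{0}\circ[gr]=CC\circ[rat]$ at $v=1$, together with an adaptation of \cite[Theorem 9.11]{HLCC}: the characteristic cycle map $\mathcal{K}'_{\oplus}(\mathcal{Q}_{c})|_{v=1}\to\mathcal{H}^{0}$ is an isomorphism of $\mathbb{Z}$-algebras with $[\mathbf{1}_{ai}]\mapsto[T^{\ast}_{0}\mathbf{E}_{ai}]$. This is a statement about $\mathcal{H}^{0}$ as a whole, not a basis-by-basis comparison.

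Your implicit observation that the top-dimensional cycle of $gr^{F}\mathcal{M}$ equals the characteristic cycle does give $Ch_{0}\circ[gr]=CC\circ[rat]$ object by object. That is more direct than the paper's check on generators, which needs $Ch_{0}$ to be multiplicative via \cite{compar} and \cite{Dav17}. You still need the faithfulness statement of \cite{HLCC}, or an equivalent, to conclude.

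\textbf{The lift from $v=1$ to $\mathbb{Z}[v^{\pm}]$.} You are right that this needs an argument; the paper deduces injectivity directly from the $v=1$ isomorphism. Your Nakayama suggestion does not address the real issue. First, $v-1$ is not homogeneous for any grading in which $v$ shifts degree, so graded Nakayama does not apply. Second, the actual obstruction is possible $(v-1)$-torsion in $\mathcal{A}^{nil}_{\alpha}$. Such torsion does occur in equivariant $K$-theory of reducible conical varieties: $K_{\mathbb{C}^{\times}}(\{xy=0\})\cong\mathbb{Z}[v^{\pm}]\oplus\mathbb{Z}[v^{\pm}]/(v^{2}-1)$ when $\mathbb{C}^{\times}$ scales only $y$.

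What closes the argument is torsion-freeness. Suppose $v-1$ is a non-zero-divisor on the $\mathbb{Z}[v^{\pm}]$-span of the classes $[gr](L)$. Given $\sum_{L}c_{L}[gr](L)=0$, the $v=1$ result forces $c_{L}=(v-1)c'_{L}$, and then $\sum_{L}c'_{L}[gr](L)=0$. Iterating, each $c_{L}$ is divisible by every power of $v-1$, hence $c_{L}=0$. You need to prove this torsion-freeness, or find another argument over $\mathbb{Z}[v^{\pm}]$, to finish.
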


The theorem above can be viewed as a quantization of \cite[Theorem 1.1 and 1.2]{HLCC}. More precisely, the proof of Proposition \ref{inj} shows the following results.
\begin{theorem}\label{main2}
	With the notations above, let  $\mathcal{K}'_{\oplus} (rat(\mathcal{Q}))$ be the (split) Grothendieck group of underlying semisimple complexes of objects in $\mathcal{Q}$ with the algebra structure given by (twisted) Lusztig's induction, and let $\mathcal{H}^{0}=\bigoplus\limits_{\alpha \in \mathbb{N}Q_{0}} {\rm{H}}^{BM}_{top}(\mathfrak{N}_{\alpha},\mathbb{Z})$ be the (twisted) zero degree cohomological Hall algebra of strongly seminilpotent stacks. After taking classical limit $v \rightarrow 1$, there is a commutative diagram of $\mathbb{Z}$-algebras
	\[
	\xymatrix{   \mathcal{K}'_{\oplus} (\mathcal{Q})|_{v=1} \ar[r]^{[gr]}  \ar[d]_{[rat]} &  \mathcal{A}^{nil}_{zs}|_{v=1} \ar[d]^{Ch_{0}} \\
	\mathcal{K}'_{\oplus} (rat(\mathcal{Q}))|_{v=1} \ar[r]^-{CC}	&     \mathcal{H}^{0},
	}
	\]
	where $[rat]$ takes underlying constructible sheaves of mixed Hodge modules,
	$CC$ is the characteristic cycle map considered in \cite{HLCC}, and $Ch_{0}$ is the map taking zero degree Chern characters. All algebras above are canonically  isomorphic to the enveloping algebra  $\mathbf{U}^{\mathbb{Z}}(\mathfrak{n}^{+}_{Q})$.
\end{theorem}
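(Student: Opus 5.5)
The plan is to reduce commutativity of the square to a statement about generators. All four maps are $\mathbb{Z}$-algebra homomorphisms after specializing $v=1$:
\begin{itemize}
\item $[gr]$ by Theorem \ref{main1};
\item $[rat]$ because taking underlying constructible complexes commutes with Lusztig's induction, which is built from pullback, smooth descent and proper pushforward;
\item $CC$ by \cite{HLCC};
\item $Ch_{0}$ because the zero degree Chern character is compatible with the K-theoretic and cohomological Hall products.
\end{itemize}
For the last point we argue as follows. The K-theoretic Hall product is a composition of a (derived) pullback along a correspondence and a proper pushforward, and likewise for the cohomological Hall product. Grothendieck--Riemann--Roch (or the localized Chern character with Todd class corrections) then intertwines the two products up to Todd classes. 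Only the top-dimensional Borel--Moore part survives in ${\rm H}^{BM}_{top}(\mathfrak{N}_{\alpha})$, and there the Todd corrections contribute trivially at the leading order. Specializing $v=1$ kills the $\mathbb{C}^{\times}$-equivariant twist. So the twisted products match.

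Given these four homomorphisms, it suffices to check commutativity on algebra generators of $\mathcal{K}'_{\oplus}(\mathcal{Q})|_{v=1}$. By Theorem \ref{main1} these generators are the classes of the constant (shifted) Hodge modules $\mathbb{Q}^{H}_{\mathbf{E}_{ai}}$ on $\mathbf{E}_{ai}$, for $i\in Q_{0}$ and $a\geqslant 1$; for imaginary $i$ we may instead take the appropriate Fourier-transformed generators of Section \ref{BB}.

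For such a generator we compute both paths explicitly.
\begin{itemize}
\item Along the top, $[gr]$ sends the constant Hodge module to the structure sheaf of the zero section $T^{\ast}_{0}\mathbf{E}_{ai}$, since the associated graded of a trivial $\mathcal{D}$-module is $\mathcal{O}$ of the zero section. Then $Ch_{0}[\mathcal{O}_{T^{\ast}_{0}\mathbf{E}_{ai}}]$ is the fundamental class $[T^{\ast}_{0}\mathbf{E}_{ai}]$.
\item Along the bottom, $[rat]$ gives the constant sheaf, and $CC$ of a constant sheaf on a smooth space is the fundamental class of the zero section, up to the sign convention fixed in \cite{HLCC}.
\end{itemize}
The two agree, so the square commutes.

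Alternatively, and closer to what the text says, the square falls out of the proof of Proposition \ref{inj}. That proof presumably factors $[gr]$ through the characteristic cycle. This rests on the general fact that for a mixed Hodge module $M$, the support cycle of $\mathrm{gr}^{F}M$ equals the characteristic cycle of $rat(M)$ in top Borel--Moore homology. Granting that fact, commutativity holds for every object, not just generators.

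Finally, the identification of all four algebras with $\mathbf{U}^{\mathbb{Z}}(\mathfrak{n}^{+}_{Q})$ goes in three steps:
\begin{itemize}
\item the top-left and top-right corners are the $v=1$ specializations of the isomorphisms in Theorem \ref{main1};
\item the bottom corners are isomorphic to $\mathbf{U}^{\mathbb{Z}}(\mathfrak{n}^{+}_{Q})$ by \cite[Theorems 1.1, 1.2]{HLCC};
\item the maps respect the generators $E_{(i,a)}$ or $E_{i}^{(a)}$, so all isomorphisms are the canonical ones.
\end{itemize}

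The main obstacle is the compatibility of $Ch_{0}$ with the Hall products and the correct normalization of twists and signs. One must make sure that the $v$-twist in the K-theoretic Hall product and the sign/shift twist in the cohomological one both specialize consistently at $v=1$. One must also check that no lower-degree Todd contributions enter the top-degree part. I would handle this by comparing both products degree by degree via GRR, using that the strongly seminilpotent stacks are Lagrangian, hence of pure expected dimension.
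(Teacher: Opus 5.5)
Your architecture matches the paper's: all four arrows are algebra maps at $v=1$, and one compares $CC\circ[rat]$ with $Ch_{0}\circ[gr]$ on the generators $[\mathbf{1}_{ai}]$. The gap is that your generator check is carried out for the wrong objects. In this paper $\mathbf{1}_{ai}=(\pi_{(i),(a)})_{!}\mathbf{1}_{\tilde{\mathbf{E}}_{(i),(a)}}$, and the condition $x(\mathbf{W}_{1})\subseteq\mathbf{W}_{0}=0$ forces $x=0$. So $\mathbf{1}_{ai}$ is $\underline{\mathbb{Q}}^{H}$ pushed forward from the origin of $\mathbf{E}_{ai}$, not the constant module on $\mathbf{E}_{ai}$. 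Correspondingly, $T^{\ast}_{0}\mathbf{E}_{ai}=\pi^{-1}_{\mathbf{E}_{ai}}(0)$ is the cotangent fiber (conormal) at $0$, not the zero section. The zero section is $T^{\ast}_{\mathbf{E}_{ai}}\mathbf{E}_{ai}$, which is exactly the class the paper says must be replaced when adapting \cite[Theorem 9.11]{HLCC}. For real $i$ the two readings coincide because $\mathbf{E}_{ai}$ is a point. At a vertex with loops, however, your computation concerns objects outside the theory. The constant module on $\mathbf{E}_{ai}$ is not in $\mathcal{Q}_{ai}$, since everything there is supported on loop tuples that are strictly triangular for a common flag. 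Likewise the zero section is not contained in $\Lambda_{ai}$, which forces $x$ to be nilpotent, so the structure sheaf of the zero section is not even a class in $\mathcal{A}^{nil}_{ai}$. Your aside that one \emph{may instead} use the Fourier-transformed generators does not repair this: they are the only generators, and the computation has to be done for them. Proposition \ref{pf} applied to $\{0\}\hookrightarrow\mathbf{E}_{ai}$ gives $gr(\mathbf{1}_{ai})\cong\pi^{\ast}_{\mathbf{E}_{ai}}\mathcal{O}_{0}=\mathcal{O}_{T^{\ast}_{0}\mathbf{E}_{ai}}$ up to grading shift. Hence $Ch_{0}$ gives $[T^{\ast}_{0}\mathbf{E}_{ai}]$, and $CC$ of the skyscraper $\mathbb{Q}_{\{0\}}$ is the same conormal class. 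For the same reason, the bottom row is not \cite{HLCC} verbatim but its variant with $[T^{\ast}_{0}\mathbf{E}_{\alpha}]$ in place of $[T^{\ast}_{\mathbf{E}_{\alpha}}\mathbf{E}_{\alpha}]$.

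Two further points. First, using Theorem \ref{main1} for the right column is circular. The paper deduces injectivity of $[gr]$, and hence $\mathcal{A}^{nil}_{zs}\cong\mathbf{U}^{\mathbb{Z}}_{v}(\mathfrak{n}^{+}_{Q})$, from this very square, because $CC\circ[rat]$ is an isomorphism. You should take multiplicativity of $[gr]$ from the proposition comparing convolutions and derive the right-hand identifications afterwards. Second, for multiplicativity of $Ch_{0}$ the paper does not rerun GRR. It cites \cite[Theorem 4.8]{compar}: $f\mapsto Td^{1/2}Ch(f)$ intertwines the KHA with a twisted CoHA, transferred to preprojective algebras via \cite[Appendix]{Dav17}. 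In degree zero the Todd factor is $1$, and the twist is not killed at $v=1$ but becomes a sign matching $\Psi(\alpha,\beta)$. Your sketch skips the virtual pullback $\tilde{p}^{!}$, which is the actual content of that theorem. Your alternative route, which the paper does not use, would in fact make this step unnecessary. The cycle of $gr^{F}M$ is the characteristic cycle of the underlying $\mathcal{D}$-module, and for regular holonomic modules this equals $CC(rat(M))$, so the square commutes on every object. Since $[gr]$ maps onto $\mathcal{A}^{nil}_{zs}$, multiplicativity of $Ch_{0}$ on $\mathcal{A}^{nil}_{zs}|_{v=1}$ then follows from that of $[gr]$, $[rat]$ and $CC$.
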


\subsubsection{Geometric realization of Lusztig symmetries}
Fix $i \in Q^{re}_{0}$ and assume that $i$ is a source in $Q$, let $Q'$ be the quiver obtained by reversing all arrows of $Q$ at $i$. Take dimension vectors $\alpha,\beta$ such that $s_{i}(\alpha)=\beta$, where $s_{i}$ is the simple reflection. Let $\mathbf{E}^{(i)}_{\alpha,Q,0}/\mathbf{G}_{\alpha}, {^{(i)}  \mathbf{E}_{\beta,Q',0}}/\mathbf{G}_{\beta},\mathfrak{N}^{(i)}_{\alpha,Q} $ and $^{(i)}\mathfrak{N}_{\beta,Q'}$ be the open substacks of $\mathbf{E}_{\alpha,Q}/\mathbf{G}_{\alpha},\mathbf{E}_{\beta,Q'}/\mathbf{G}_{\beta}, \mathfrak{N}_{\alpha,Q}$ and $\mathfrak{N}_{\beta,Q'}$ respectively, which parameterize the representations (of quivers or their preprojective algebras) having no direct summand isomorphic to the simple module of dimension vector $i$. 

The BGP reflection functor of quivers induces an isomorphism between the open substacks $\mathbf{E}^{(i)}_{\alpha,Q,0}/\mathbf{G}_{\alpha}$ and $ {^{(i)}  \mathbf{E}_{\beta,Q',0}}/\mathbf{G}_{\beta}$, and hence defines an isomorphism $[\mathbf{S}_{i}]$ between the (split) Grothendieck groups of mixed Hodge modules on these substacks. Similarly, the tilting theory of preprojective algebra provides an isomorphism between $\mathfrak{N}^{(i)}_{\alpha,Q} $ and $^{(i)}\mathfrak{N}_{\beta,Q'}$, and induces an isomorphism $[\mathbf{R}_{i}]$ between their $K$ groups.

\begin{theorem}\label{main3}
	Denote the restriction of $ K'_{\oplus} (\mathcal{Q}_{\alpha,Q}), K'_{\oplus} (\mathcal{Q}_{\beta,Q'}), \mathcal{A}^{nil}_{\alpha} $ and $\mathcal{A}^{nil}_{\beta}$ on the associated open substacks by $K'_{\oplus} (\mathcal{Q}^{(i)}_{\alpha,Q}),K'_{\oplus} ({^{(i)}\mathcal{Q}}_{\beta,Q'}),\mathcal{A}^{nil,(i)}_{\alpha}   $ and $^{(i)}\mathcal{A}^{nil}_{\beta}$ respectively, then there is a commutative diagram of $\mathbb{Z}[v^{\pm}]$-modules
	\[
	\xymatrix{
		K'_{\oplus} (\mathcal{Q}^{(i)}_{\alpha,Q})  \ar[r]^{[\mathbf{S}_{i}]} \ar[d]_{[gr]}	& K'_{\oplus} ({^{(i)}\mathcal{Q}}_{\beta,Q'}) \ar[d]^{[gr]} \\
		\mathcal{A}^{nil,(i)}_{\alpha} \ar[r]^{[\mathbf{R}_{i}]}	&  {^{(i)}\mathcal{A}}^{nil}_{\beta}.
  	}
	\]
	Moreover, after restricting to zero spherical subalgebras, we obtain the following commutative diagram of isomorphisms
	\[
	\xymatrix{
		& \mathcal{K}'_{\oplus}( \mathcal{Q}^{(i)}_{\alpha,Q} ) \ar[r]^{[\mathbf{S}_{i}]} \ar[dd] &  \mathcal{K}'_{\oplus}({^{(i)}\mathcal{Q}}_{\beta,Q'}) \ar[dd] & \\
		\mathbf{U}^{\mathbb{Z},(i)}_{v} (\mathfrak{n}^{+}_{Q}) \ar[rrr]^{T''_{i,1}} \ar[ru]^{\phi_{Q}} \ar[rd]_{\psi_{Q}}	& & & ^{(i)}\mathbf{U}^{\mathbb{Z}}_{v} (\mathfrak{n}^{+}_{Q'}) \ar[lu]_{\phi_{Q'}} \ar[ld]^{\psi_{Q'}}  \\
		&  \mathcal{A}^{nil,(i)}_{zs,\alpha} \ar[r]^{\mathbf{R}_{i}} &  ^{(i)}\mathcal{A}^{nil}_{zs,\beta} &.
	}
	\]
 \end{theorem}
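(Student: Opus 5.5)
The plan is to reduce both diagrams to a comparison of two geometric constructions of the reflection isomorphism on the cotangent side. The key structural input is how $[gr]$ is built. Presumably it takes a mixed Hodge module $M$ on $\mathbf{E}_{\alpha,Q}/\mathbf{G}_{\alpha}$ with its Hodge filtration to $\mathrm{gr}^{F}M$. This is a $\mathbb{C}^{\times}$-equivariant coherent sheaf on $T^{\ast}\mathbf{E}_{\alpha,Q}/\mathbf{G}_{\alpha}$ supported on the characteristic variety. For objects of $\mathcal{Q}$ that support lies in $\mathfrak{N}_{\alpha}$, so the class lands in $K_{\mathbb{C}^{\times}}(\mathfrak{N}_{\alpha})$.

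\textbf{Step 1: compatibility with open restriction.} Restriction to the open substacks commutes with taking $\mathrm{gr}^{F}$, since $F$ is compatible with open pullback. The cotangent stack of $\mathbf{E}^{(i)}_{\alpha,Q,0}/\mathbf{G}_{\alpha}$, intersected with $\mathfrak{N}_{\alpha}$, is an open substack of $\mathfrak{N}_{\alpha}$ containing $\mathfrak{N}^{(i)}_{\alpha,Q}$. I would check that these two opens agree after restricting supports, or at least that $[gr]$ of a restricted object is supported in $\mathfrak{N}^{(i)}_{\alpha,Q}$. The check uses the fact that $\Pi_Q$-modules with no summand $S_i$ correspond, via the moment map, to $Q$-representations with $i$ a source and injective structure map out of $i$. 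This gives a well-defined vertical arrow $[gr]$ on the restricted Grothendieck groups.

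\textbf{Step 2: the first square.} Choose an open $U$ of representations where the BGP functor is defined, namely those with $\bigoplus_{s(h)=i}V_i\to V_{t(h)}$ injective. The isomorphism $\mathbf{S}_i$ of quotient stacks is realized by a correspondence $U/\mathbf{G}_{\alpha}\leftarrow Z\rightarrow U'/\mathbf{G}_{\beta}$ with smooth projections, namely principal bundles for $GL(V_i)$ and $GL(V'_i)$ over a common Grassmannian-type base. Pullback along a smooth morphism commutes with $\mathrm{gr}^{F}$ up to the standard shift and Tate twist, given by the Hodge filtration of smooth pullback, i.e.\ pullback along the cotangent correspondence $T^{\ast}$. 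So $[gr]\circ[\mathbf{S}_i]$ equals the class obtained by transporting $\mathrm{gr}^{F}M$ through the induced isomorphism of cotangent stacks $T^{\ast}(U/\mathbf{G}_\alpha)\cong T^{\ast}(U'/\mathbf{G}_\beta)$, restricted to the nilpotent loci.

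It remains to identify this cotangent isomorphism on $\mathfrak{N}^{(i)}_{\alpha,Q}$ with $\mathbf{R}_i$, which comes from tilting by the ideal $I_i=\Pi_Q(1-e_i)\Pi_Q$. I expect this identification to be the main obstacle. I would first compare on $\mathbb{C}$-points: for a $\Pi_Q$-module $X$ without $S_i$ summand, $I_i\otimes X$ replaces $X_i$ by the kernel of $\bigoplus_{t(h)=i}X_{s(h)}\to X_i$. By the preprojective relation at $i$, this kernel is the cokernel-dual data, which is exactly Lusztig/BGP reflection of the underlying $Q$-representation together with the transported cotangent covector. Then I would upgrade to families over the stacks, with $\mathbb{C}^{\times}$-weights matched by tracking which arrows are scaled, following \cite{SY13} and \cite{BKT14}. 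Any discrepancy in weights should be absorbed by the twist in the product, and I would fix the normalization of $[\mathbf{R}_i]$ accordingly.

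\textbf{Step 3: the second diagram.} Since $[gr]$ is an injective algebra map with image $\mathcal{A}^{nil}_{zs}$ by Theorem~\ref{main1}, the isomorphisms $\phi_Q,\psi_Q$ factor through the quotients and satisfy $\psi_Q=[gr]\circ\phi_Q$. The left and right triangles therefore commute, and the two vertical maps are isomorphisms onto the restricted zero spherical parts. For the top trapezoid, that $[\mathbf{S}_i]\circ\phi_Q=\phi_{Q'}\circ T''_{i,1}$, I would check the identity on the generating elements $f(i,j',m)$. This is Lusztig's classical computation that BGP reflection sends the semisimple complexes representing $f(i,j',m)$ to those representing $f'(i,j',-m-(i,j'))$, lifted to mixed Hodge modules by keeping track of Tate twists. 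The bottom square then follows from Step 2, and all maps are isomorphisms because $T''_{i,1}$ is.
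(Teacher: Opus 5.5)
Your outline is the paper's: compatibility of $gr$ with pullback along the two torsors $pr_1\colon Z_{\alpha,\beta}\to\mathbf{E}^{(i)}_{\alpha,Q,0}$ and $pr_2\colon Z_{\alpha,\beta}\to{}^{(i)}\mathbf{E}_{\beta,Q',0}$. Then the resulting cotangent correspondence is identified with $\overline{Z}_{\alpha,\beta}$, via $\Lambda_{\alpha,Q}\times_{\mathbf{E}_{\alpha,Q}}Z_{\alpha,\beta}\cong\overline{Z}_{\alpha,\beta}$. The second diagram follows from Corollary~\ref{ciso} and the BGP/Lusztig compatibility of $[\mathbf{S}_i]$.

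However, your smooth-pullback formula in Step 2 is missing a factor, and that factor is exactly what the paper's proof has to control. In the paper's right-$\mathcal{D}$-module convention, smooth pullback does not commute with $gr$ merely up to shift and Tate twist. By Propositions~\ref{pb} and~\ref{eqo} one must also tensor with the relative canonical bundle. Hence $gr(\mathbf{S}_i\mathcal{M})$ is the transport of $gr\,\mathcal{M}$ through $\overline{Z}_{\alpha,\beta}$, tensored with the $\mathbf{G}_{\alpha,\beta}\times\mathbb{C}^{\times}$-equivariant line bundle $p^{\ast}\omega_{pr_1}\otimes p^{\ast}\omega_{pr_2}^{-1}$, where $p\colon\overline{Z}_{\alpha,\beta}\to Z_{\alpha,\beta}$.

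Tensoring with an equivariant line bundle is in general not multiplication by a power of $v$. So no normalisation of $[\mathbf{R}_i]$ can absorb it. Your appeal to ``the twist in the product'' is also beside the point: the first square concerns a single pair $\alpha,\beta$ and involves no product. The square commutes only once this bundle is shown to be trivial.

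The paper does this by rewriting the bundle as $pr_1^{\ast}\omega^{-1}_{\mathbf{E}^{(i)}_{\alpha,Q,0}}\otimes pr_2^{\ast}\omega_{{}^{(i)}\mathbf{E}_{\beta,Q',0}}$. Arrows not touching $i$ cancel because $\mathcal{V}_{\alpha_j}=\mathcal{V}_{\beta_j}$ for $j\neq i$. Arrows at $i$ cancel via $\det\mathcal{V}_{\alpha_i}\otimes\det\mathcal{V}_{\beta_i}\cong\det\bigl(\bigoplus_{s(e)=i}\mathcal{V}_{\alpha_{t(e)}}\bigr)$, coming from the tautological exact sequence $0\to\mathcal{V}_{\alpha_i}\to\bigoplus_{s(e)=i}\mathcal{V}_{\alpha_{t(e)}}\to\mathcal{V}_{\beta_i}\to 0$ on $Z_{\alpha,\beta}$. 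Alternatively, the relative tangent bundle of $pr_1$ (resp.\ $pr_2$) is the adjoint representation of the direct factor $GL(\mathbb{C}^{\beta_i})$ (resp.\ $GL(\mathbb{C}^{\alpha_i})$) of $\mathbf{G}_{\alpha,\beta}$, which has trivial determinant.

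There are three smaller problems.

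\textbf{The identification you call the main obstacle.} Your description of $I_i\otimes_{\Pi_Q}X$, replacing $X_i$ by the kernel of the in-map $\bigoplus_{s(e)=i}X_{t(e)}\to X_i$ given by the $\bar{x}_{e^{\ast}}$, is actually $R'_i=\mathrm{Hom}_{\Pi_Q}(I_i,-)$. That kernel does not even have constant dimension on $\mathfrak{N}^{(i)}_{\alpha,Q}$. The functor $R_i=I_i\otimes_{\Pi_Q}-$, and equally the cotangent correspondence of $pr_1,pr_2$, replaces $X_i$ by $\mathrm{Coker}(x^{(i)})$. This has dimension $\beta_i$ precisely because $x^{(i)}$ is injective on $\Lambda^{(i)}_{\alpha,Q}$. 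The new arrows out of $i$ are given by $x^{(i)}\circ{}^{(i)}\bar{x}\circ\rho^{-1}$; this is the paper's map $\rho_2$.

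\textbf{Step 1.} On the $\Pi_Q$-side the relevant condition is not ``no $S_i$ summand'': a $\Pi_Q$-module can contain $S_i$ in its socle without it splitting off. But $\Lambda^{(i)}_{\alpha,Q}$ is by definition $\Lambda_{\alpha,Q}$ intersected with the preimage of $\mathbf{E}^{(i)}_{\alpha,Q,0}$, so Step 1 is immediate.

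\textbf{Step 3.} Injectivity of $[gr]$ on $\mathcal{K}'_{\oplus}(\mathcal{Q})$ does not by itself give injectivity after restricting to the open loci. You also need the kernels of restriction to correspond. The paper records this as $\ker(\tilde{\iota}^{(i)})^{\ast}=\sum_{a}\mathcal{A}^{nil}_{zs,\alpha-ai}\ast[\mathcal{O}_{T^{\ast}_{0}\mathbf{E}_{ai,Q}}]$, which is the image under $\psi_Q$ of $\sum_a\mathbf{U}^{\mathbb{Z}}_{v}(\mathfrak{n}^{+}_{Q})E^{(a)}_{i}$.
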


\subsection{Structure of the paper}
In section 2, we define the functor taking associated graded modules of equivariant derived categories of mixed Hodge modules and study its functorial properties. In section 3, we compare the geometric realizations of Lusztig integral form via the functor $gr$. In section 4, we study the connection between the geometric BGP reflection functors and the derived reflection functors for K theoretic Hall algebra of preprojective algebras.

\subsection*{Convention}
Given a smooth complex variety $X$, we denote its dimension by $d_{X}$, the tangent sheaf by $\Theta_{X}$, the cotangent sheaf by $\Omega_{X}$, and $\Omega_{X}^{d_{X}}$ by $\omega_{X}$. Since we usually work on derived categories, we denote by $f^{\ast},f_{\ast},f^{!},f_{!},\otimes$ their derived functors for simplicity. We use right $\mathcal{D}$-modules in the theory of mixed Hodge modules, following  \cite{Schnell2014AnOO} and the proofs in \cite{saito1990mixed}. For example, the pure mixed Hodge module $\underline{\mathbb{Q}}^{H}_{X}$ of weight $0$ for a smooth complex variety $X$ is given by $\underline{\mathbb{Q}}^{H}_{X}=(\omega_{X},F_{\bullet} \omega_{X},\underline{\mathbb{Q}}_{X}[{\rm{dim}}X])$, where $\omega_{X}$ is regarded as a right $\mathcal{D}_{X}$-module and $gr^{F}_{p}\omega_{X}=0$ unless $p=- {\rm{dim}}X $.
\section{Mixed Hodge modules and associated graded modules}
\subsection{Equivariant mixed Hodge modules}
Let $X$ be a smooth complex variety and $\mathbf{k}$ be a subfield of $\mathbb{R}$, we  consider a quadruple $(\mathcal{M},F,\mathcal{K},W)$, where
\begin{enumerate}
    \item $\mathcal{M}$ is a regular holonomic right
     $\mathcal{D}_{X}$-module;
    \item $F=F_{\bullet}\mathcal{M}$ is a good filtration on $\mathcal{M}$, which is called the Hodge filtration;
    \item $\mathcal{K}$ is a perverse sheaf in ${\rm{Perv}}(X,\mathbf{k})$ equipped with an isomorphism $\mathbb{C}\otimes_{\mathbf{k}} \mathcal{K} \cong \rm{dR}(\mathcal{M})$, where $\rm{dR}$ is the de Rham functor in Riemann-Hilbert correspondence;
    \item $W=W_{\bullet}\mathcal{K}$  is an increasing filtration on $\mathcal{K}$, which is called the weight filtration.
\end{enumerate}
Quadruples $(\mathcal{M},F,\mathcal{K},W)$ satisfying certain conditions are called  mixed Hodge modules on $X$, and they form a finite length Abelian category ${\rm{MHM}}(X,\mathbf{k})$. One can see details in \cite{Saito1988MHM}, \cite{saito1990mixed} and \cite{Schnell2014AnOO}.

There is an exact faithful functor 
$ {\rm{rat}}:{\rm{MHM}}(X,\mathbf{k}) \longrightarrow  {\rm{Perv}}(X,\mathbf{k}) $
defined by $(\mathcal{M},F,\mathcal{K},W) \mapsto \mathcal{K}$, and this functor also induces a functor of derived categories $${\rm{rat}}:\mathcal{D}^{b}({\rm{MHM}}(X,\mathbf{k}) )  \longrightarrow \mathcal{D}^{b}( {\rm{Perv}}(X,\mathbf{k}) ) \cong \mathcal{D}^{b}_{c}(X,\mathbf{k}).$$

Mixed Hodge modules admit a six functor formalism. Concretely, for a morphism $f:X \rightarrow Y$ of complex varieties, there are functors
$$ f_{\ast},f_{!}:   \mathcal{D}^{b}({\rm{MHM}}(X,\mathbf{k}) )   \longrightarrow \mathcal{D}^{b}({\rm{MHM}}(Y,\mathbf{k}) ), $$
$$ f^{\ast},f^{!}:   \mathcal{D}^{b}({\rm{MHM}}(Y,\mathbf{k}) )   \longrightarrow \mathcal{D}^{b}({\rm{MHM}}(X,\mathbf{k}) ), $$
$$ \otimes: \mathcal{D}^{b}({\rm{MHM}}(X,\mathbf{k}) ) \times \mathcal{D}^{b}({\rm{MHM}}(X,\mathbf{k}) ) \longrightarrow \mathcal{D}^{b}({\rm{MHM}}(X,\mathbf{k}) ),   $$
$$ R\mathcal{H}om : \mathcal{D}^{b}({\rm{MHM}}(X,\mathbf{k}) )^{op} \times \mathcal{D}^{b}({\rm{MHM}}(X,\mathbf{k}) ) \longrightarrow \mathcal{D}^{b}({\rm{MHM}}(X,\mathbf{k}) )$$
that commute with $rat$. These functors satisfy natural adjoint property and base change property.

\subsection{Tate twist and its square root}
When $X=pt$, the Tate module $\mathbf{k}(1)=(\mathbb{C},F,2\pi \sqrt{-1} \mathbf{k})\in {\rm{MHM}}(pt,\mathbf{k})$ determines a pure Hodge module of weight $-2$. For general $X$, tensoring with the Tate module defines a functor $(1): \mathcal{D}^{b}({\rm{MHM}}(X,\mathbf{k}) )  \rightarrow  \mathcal{D}^{b}({\rm{MHM}}(X,\mathbf{k}) )$, which is called the Tate twist.

In order to construct self-dual IC complex, we need to add the square root of the Tate twist, following \cite{beilinson1996koszul}. Let $\mathcal{D}^{b}({\rm{MHM}}(X,\mathbf{k}) )'$ be the category $\mathcal{D}^{b}({\rm{MHM}}(X,\mathbf{k}) ) \bigoplus \mathcal{D}^{b}({\rm{MHM}}(X,\mathbf{k}) )$ and let $(\frac{1}{2})$ be the functor sending $\mathcal{F}=(\mathcal{F}_{1},\mathcal{F}_{2})$ to $(\mathcal{F}_{2}(1),\mathcal{F}_{1})$. Regard $\mathcal{D}^{b}({\rm{MHM}}(X,\mathbf{k}) )$ as the full subcategory of $\mathcal{D}^{b}({\rm{MHM}}(X,\mathbf{k}) )'$ via the diagonal map, then it's easy to check that $(\frac{1}{2})^{2}=(1)$ on $\mathcal{D}^{b}({\rm{MHM}}(X,\mathbf{k}) )$. The six functors are also naturally defined on $\mathcal{D}^{b}({\rm{MHM}}(X,\mathbf{k}) )'$. 

The Grothendieck group $\mathcal{K}_{0}  ( \mathcal{D}^{b}({\rm{MHM}}(X,\mathbf{k}) )') $ of $\mathcal{D}^{b}({\rm{MHM}}(X,\mathbf{k}) )'$ is the $\mathbb{Z}[v^{\pm}]$-module spanned by  $\{[\mathcal{M}],\mathcal{M}$ is an object of $\mathcal{D}^{b}({\rm{MHM}}(X,\mathbf{k}) )'\}$, and subject to the relation   
$$[\mathcal{M}]+[\mathcal{L}]=[\mathcal{N}],  $$ 
if $\mathcal{M} \rightarrow \mathcal{N} \rightarrow \mathcal{L} \xrightarrow{+1}$ is a distinguished triangle in $\mathcal{D}^{b}({\rm{MHM}}(X,\mathbf{k}) )'$, and
$$[\mathcal{M}(\frac{1}{2})]=v[\mathcal{M}].$$

\subsection{Associated graded module of mixed Hodge module}\label{grdef}
\subsubsection{Definition of $gr$} \label{grt}
Given a mixed Hodge module on $X$, one can take its associated graded module to get a $\mathbb{C}^{\times}$-equivariant quasi-coherent sheaf on the cotangent bundle $T^{\ast}X$. Here $\mathbb{C}^{\times}$ acts on $T^{\ast}X$ by scaling the fibers with weight $-1$.

Recall that $\mathcal{D}_{X}$ is equipped with a natural filtration given by the order of differential operator, and its associated graded algebra $gr \mathcal{D}_{X}$ is isomorphic to $\pi_{\ast} \mathcal{O}_{T^{\ast}X}$, where $\pi: T^{\ast}X \rightarrow X$ is the projection.  For a $\mathcal{D}_{X}$-module $\mathcal{M}$ with compatible filtration $F=F_{\bullet}\mathcal{M}$, the associated graded module $gr(\mathcal{M})$ becomes a $\pi_{\ast} \mathcal{O}_{T^{\ast}X}$-module.  Since the localization gives an equivalence between $\pi_{\ast} \mathcal{O}_{T^{\ast}X}$-modules and  $ \mathcal{O}_{T^{\ast}X}$-modules, one can regard $gr(\mathcal{M})$ as a $\mathbb{Z}$-graded coherent sheaf on $T^{\ast}X$ and obtain a functor 
$$ gr:\mathcal{D}^{b}({\rm{MHM}}(X,\mathbf{k}) ) \longrightarrow \mathcal{D}_{gr}^{b}(\mathcal{O}_{T^{\ast}X}-mod),$$
where we identify left $\mathcal{O}_{T^{\ast}X} $ or $\pi_{\ast} \mathcal{O}_{T^{\ast}X} $ modules with right $\mathcal{O}_{T^{\ast}X}$ or $\pi_{\ast} \mathcal{O}_{T^{\ast}X}$ modules respectively. This is  the same as a functor 
$$ gr:\mathcal{D}^{b}({\rm{MHM}}(X,\mathbf{k}) ) \longrightarrow \mathcal{D}^{b}({\rm{Coh}}^{\mathbb{C}^{\times}}(T^{\ast} X) ) \subseteq \mathcal{D}^{b}({\rm{Qcoh}}^{\mathbb{C}^{\times}}(T^{\ast} X) ).$$

After extending by $gr(\mathcal{M} (\frac{1}{2}))=gr(\mathcal{M})\langle 1\rangle$, where the $\langle 1\rangle$ means a square root of grading shift, we obtain a functor  
$$ gr:\mathcal{D}^{b}({\rm{MHM}}(X,\mathbf{k}) )' \longrightarrow \mathcal{D}^{b}({\rm{Coh}}^{\mathbb{C}^{\times}}(T^{\ast} X) )' \subseteq \mathcal{D}^{b}({\rm{Qcoh}}^{\mathbb{C}^{\times}}(T^{\ast} X) )',$$
where $\mathcal{D}^{b}({\rm{Coh}}^{\mathbb{C}^{\times}}(T^{\ast} X) )$ and $\mathcal{D}^{b}({\rm{Qcoh}}^{\mathbb{C}^{\times}}(T^{\ast} X) )$ are the bounded derived category of $\mathbb{C}^{\times}$-equivariant coherent and quasi-coherent sheaves on $T^{\ast}X$ respectively, $\mathcal{D}^{b}({\rm{Coh}}^{\mathbb{C}^{\times}}(T^{\ast} X) )'$ and $\mathcal{D}^{b}({\rm{Qcoh}}^{\mathbb{C}^{\times}}(T^{\ast} X) )'$ means that we have added the square root $\langle 1 \rangle$.

The  equivariant $K$ group  $K_{\mathbb{C}^{\times}} (T^\ast X)$ of $T^{\ast} X$ has a natural $\mathbb{Z}[v^{\pm}]$-module such that $\langle 1 \rangle$ acts by a square root $v=q^{\frac{1}{2}}$ of $q$. By definition of $gr$, the functor $gr$ induces a morphism of $\mathbb{Z}[v^{\pm}]$-modules
$$ [gr]: \mathcal{K}_{0}( \mathcal{D}^{b}({\rm{MHM}}(X,\mathbf{k}) )')  \longrightarrow K_{\mathbb{C}^{\times}} (T^\ast X).  $$

\subsubsection{Functorial property of $gr$}
Consider a smooth morphism $f:X \rightarrow Y$ of smooth complex varieties of relative dimension $d$, there is a correspondence of cotangent bundles

\[
\xymatrix{
	T^{\ast}X \ar[d]_{\pi_{X}}
	& X \times_{Y} T^{\ast}Y \ar[l]_{df^{t}} \ar[r]^{p_{1}} \ar[dl]_{p_{1}} \ar[d]^{p_{2}}
	& X \ar[d]^{f} \\	
	X
	& T^{\ast}Y \ar[r]^{\pi_{Y}}
	& Y.
}
\]
\begin{proposition} \label{pb}
    With the notations above, for any object $\mathcal{M}$ in  $\mathcal{D}^{b}({\rm{MHM}}(Y,\mathbf{k}) )$ or $\mathcal{D}^{b}({\rm{MHM}}(Y,\mathbf{k}) )'$, one has the following isomorphism $$gr(f^{\ast}\mathcal{M} ) \cong df^{{t}}_{\ast} p^{\ast}_{2}(gr \mathcal{M} \otimes p^{\ast}_{1} \omega_{X/Y} )[d]\langle -2d \rangle. $$
\end{proposition}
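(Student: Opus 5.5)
I will reduce the statement to Saito's explicit description of the smooth pullback of a filtered $\mathcal{D}$-module and then pass to associated graded objects. Since $f$ is smooth of relative dimension $d$, Saito's construction gives, for a mixed Hodge module $\mathcal{M}=(\mathcal{M},F,\mathcal{K},W)$ on $Y$, that $f^{\ast}\mathcal{M}[d]$ (equivalently $f^{!}\mathcal{M}[-d](-d)$) is again a mixed Hodge module. In the right $\mathcal{D}$-module convention, its underlying filtered module is
\[
f^{\ast}\mathcal{M} \leftrightarrow \omega_{X/Y}\otimes_{f^{-1}\mathcal{O}_{Y}} f^{-1}\mathcal{M},
\]
with the Hodge filtration $F_{p}=\omega_{X/Y}\otimes f^{-1}F_{p+c}\mathcal{M}$ for an explicit shift $c$ (of size $d$). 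The right $\mathcal{D}_{X}$-action is defined through the transfer bimodule $\mathcal{D}_{Y\leftarrow X}$, equivalently through $f^{\ast}\mathcal{D}_{Y}$ twisted by $\omega_{X/Y}$. I will first fix these normalizations carefully against the convention $gr^{F}_{p}\omega_{X}=0$ unless $p=-d_{X}$. I will do this by testing the case $\mathcal{M}=\underline{\mathbb{Q}}^{H}_{Y}$, where $f^{\ast}\underline{\mathbb{Q}}^{H}_{Y}=\underline{\mathbb{Q}}^{H}_{X}[-d]$. This test pins down both the cohomological shift $[d]$ and the grading shift $\langle -2d\rangle$, using that $\langle 1\rangle$ corresponds to $(\frac12)$, so that a Tate twist $(d)$ becomes $\langle 2d\rangle$.

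Next I will compute the associated graded module. Because $f$ is flat, $gr^{F}(\omega_{X/Y}\otimes f^{-1}\mathcal{M})\cong \omega_{X/Y}\otimes_{\mathcal{O}_X} f^{\ast}gr^{F}\mathcal{M}$ as graded $\mathcal{O}_{X}$-modules. The key point is to identify the $gr\mathcal{D}_{X}=\pi_{X\ast}\mathcal{O}_{T^{\ast}X}$-module structure. The action of $\mathcal{D}_{X}$ on $f^{\ast}\mathcal{M}$ factors through the map of filtered rings $\mathcal{D}_{X}\to f^{\ast}\mathcal{D}_{Y}$ given by differentiating along $f$ (vector fields act via $df:\Theta_{X}\to f^{\ast}\Theta_{Y}$). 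On associated graded objects this is exactly the map of symmetric algebras $\mathrm{Sym}\,\Theta_{X}\to f^{\ast}\mathrm{Sym}\,\Theta_{Y}$ induced by $df$. Geometrically this is the pullback of functions along $df^{t}:X\times_{Y}T^{\ast}Y\to T^{\ast}X$. Hence the graded module $f^{\ast}gr\mathcal{M}$ on $X\times_{Y}T^{\ast}Y$, which is $p_{2}^{\ast}gr\mathcal{M}$, viewed as an $\mathcal{O}_{T^{\ast}X}$-module, is $df^{t}_{\ast}p_{2}^{\ast}gr\mathcal{M}$. Here $df^{t}$ is a closed embedding (since $f$ is smooth), so $df^{t}_{\ast}$ is exact. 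Tensoring with $p_{1}^{\ast}\omega_{X/Y}$ gives the formula for a single mixed Hodge module. I will also check $\mathbb{C}^{\times}$-equivariance: the weight $-1$ scaling matches the grading on both sides.

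To pass to complexes, I will use that $f^{\ast}[d]$ is exact on $\mathrm{MHM}$ and that every functor on the right ($p_{2}^{\ast}$ by flatness, the tensor with a line bundle, $df^{t}_{\ast}$ as a closed embedding) is exact. The isomorphism for single modules is natural in $\mathcal{M}$, so it extends termwise to bounded complexes. The extension to $\mathcal{D}^{b}(\mathrm{MHM})'$ then follows because both sides commute with $(\frac12)\mapsto\langle 1\rangle$.

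I expect the main obstacle to be the bookkeeping of conventions. This means the precise filtration shift in Saito's smooth pullback for right modules, and the sign of the degree shift relative to the weight $-1$ action. Together these determine that the final shift is $[d]\langle -2d\rangle$ rather than some other combination. I will resolve this by the constant-sheaf test case, and by checking the $\mathcal{O}$-linear identification locally in coordinates where $f$ is a projection $Y\times\mathbb{A}^{d}\to Y$.
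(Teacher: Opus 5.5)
Your route is the paper's route. You describe the smooth pullback via Saito's formula $\omega_{X/Y}\otimes_{\mathcal{O}_X}f^{\ast}\mathcal{M}$, with the Hodge filtration shifted by $d$ and $\mathcal{D}_X$ acting through $df$. You then read off the associated graded as $df^{t}_{\ast}p_2^{\ast}$, twisted by $\omega_{X/Y}$. The details you add are fine, and they are what the paper leaves implicit: flatness of $f$ to commute $f^{\ast}$ with $gr^{F}$, $df^{t}$ being a closed embedding, exactness so the single-module isomorphism extends termwise to complexes, and compatibility with $(\frac{1}{2})\mapsto\langle 1\rangle$.

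\textbf{The one step that fails is your normalization check.} You correctly say that $f^{\ast}[d]$ is the exact functor and that $f^{\ast}\underline{\mathbb{Q}}^{H}_{Y}=\underline{\mathbb{Q}}^{H}_{X}[-d]$. Since $gr$ commutes with $[k]$, $gr(f^{\ast}\underline{\mathbb{Q}}^{H}_{Y})$ is concentrated in cohomological degree $d$. The right side $df^{t}_{\ast}p_2^{\ast}(\cdots)$ is a single sheaf in degree $0$, which $[d]$ moves to degree $-d$. So the test forces $[-d]$, not $[d]$. For $f:\mathbb{A}^{1}\to pt$ and $\mathcal{M}=\underline{\mathbb{Q}}^{H}_{pt}$, the left side already sits in degree $1$ and the stated right side in degree $-1$.

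What the test does confirm is $\langle -2d\rangle$. Write $f^{\ast}[d]\mathcal{M}=N(-d)$, where $N$ carries the unshifted filtration $\omega_{X/Y}\otimes f^{\ast}F_{\bullet}\mathcal{M}$. This twist moves the $F$-degree from $-d_Y$ to $-d_X$, which is exactly $\langle -2d\rangle$. The paper obtains $[d]$ only because its proof writes $f^{\ast}[-d]$ for the functor preserving mixed Hodge modules.

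So your argument, carried out as you describe, proves
\[ gr(f^{\ast}\mathcal{M})\cong df^{t}_{\ast}\bigl(p_2^{\ast}gr\,\mathcal{M}\otimes p_1^{\ast}\omega_{X/Y}\bigr)[-d]\langle -2d\rangle . \]
This coincides with the stated formula only after passing to $K$-theory, where $[d]$ and $[-d]$ both act by $(-1)^{d}$. The later results use this proposition only through $[gr]$ on Grothendieck groups, so nothing downstream is affected. However, you should not claim that the constant-sheaf test pins down $[d]$: it exposes a sign slip in the statement.
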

\begin{proof}
 For a mixed Hodge module $M=(\mathcal{M},F,\mathcal{K},W)$ on $Y$, $f^{\ast}[-d](M)$ is a mixed Hodge module which has underlying right $\mathcal{D}$-modules $\omega_{X/Y} \otimes_{ \mathcal{O}_{X}} f^{\ast} (\mathcal{M})$ and the filtration $\omega_{X/Y} \otimes_{\mathcal{O}_{X}} f^{\ast}F_{\bullet+ d}\mathcal{M} \cong  \omega_{X/Y} \otimes_{\mathcal{O}_{X}} f^{\ast}F_{\bullet}\mathcal{M}(-d)$, where the right $\mathcal{D}_{X}$-module structure is given by $\mathcal{D}_{X} \rightarrow f^{\ast} \mathcal{D}_{Y}$. See details in \cite[Section 30]{Schnell2014AnOO}.

 Hence by definition, $ gr(f^{\ast}\mathcal{M})[-d]\langle 2d \rangle = gr( f^{\ast}[-d](d) (\mathcal{M}))$ has filtration $\omega_{X/Y} \otimes_{\mathcal{O}_{X}} f^{\ast}F_{\bullet}\mathcal{M} $. After applying $\pi_{X\ast}$, the inverse image $f^{\ast}F_{\bullet}\mathcal{M}$ has associated graded module $ df^{{t}}_{\ast} p^{\ast}_{2}(gr \mathcal{M})$.  The proof is complete.
\end{proof}
 
The following proposition follows easily from the strictness of direct image, see details in \cite[Section 28]{Schnell2014AnOO}.
\begin{proposition} \label{pf}
	Assume $f$ is proper, then for any object $\mathcal{M}$ in  $\mathcal{D}^{b}({\rm{MHM}}(X,\mathbf{k}) )$ or $\mathcal{D}^{b}({\rm{MHM}}(X,\mathbf{k}) )'$, one has $gr(f_{\ast}\mathcal{M} ) \cong  p_{2\ast}(df^{t})^{\ast}(gr \mathcal{M} ) ). $
\end{proposition}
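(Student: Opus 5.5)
The plan is to reduce the statement to Saito's direct image theorem for filtered $\mathcal{D}$-modules and its compatibility with taking associated graded objects. I treat an honest mixed Hodge module $M=(\mathcal{M},F,\mathcal{K},W)$ on $X$ first. The case of complexes then follows by induction on amplitude, using the triangles of truncations and the fact that both sides are triangulated functors. The primed categories $\mathcal{D}^{b}(\cdots)'$ follow formally: $gr$ sends $(\frac{1}{2})$ to $\langle 1\rangle$, and $p_{2\ast}(df^{t})^{\ast}$ commutes with grading shifts.

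\textbf{Step 1.} Recall that for right $\mathcal{D}$-modules the direct image is computed by the filtered relative de Rham (Spencer) complex. Saito defines $f_{\ast}(\mathcal{M},F)$ as $Rf_{\ast}$ of the filtered complex $(\mathcal{M},F)\otimes^{L}_{\mathcal{D}_{X}}(\mathcal{D}_{X\to Y},F)$, where $\mathcal{D}_{X\to Y}=\mathcal{O}_{X}\otimes_{f^{-1}\mathcal{O}_{Y}}f^{-1}\mathcal{D}_{Y}$ carries its order filtration. This is resolved by the Spencer-type complex
\[
\mathcal{M}\otimes_{\mathcal{O}_{X}}\wedge^{-\bullet}\Theta_{X}\otimes_{\mathcal{O}_{X}}f^{\ast}\mathcal{D}_{Y}
\]
with its induced filtration.

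\textbf{Step 2.} Take associated graded pieces termwise. The graded of $\mathcal{D}_{X\to Y}$ is $p_{1\ast}\mathcal{O}_{X\times_{Y}T^{\ast}Y}$, viewed as a $(gr\mathcal{D}_{X},f^{-1}gr\mathcal{D}_{Y})$-bimodule, where $gr\mathcal{D}_{X}$ acts through $df^{t}$. Hence the graded of the filtered tensor product is $gr\mathcal{M}\otimes^{L}_{\mathcal{O}_{T^{\ast}X}}\mathcal{O}_{X\times_{Y}T^{\ast}Y}$. The graded Spencer complex is a Koszul resolution, so this identification holds in the derived sense; this is exactly $(df^{t})^{\ast}gr\mathcal{M}$, the derived pullback of coherent sheaves. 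Applying $Rf_{\ast}$ and passing from $\pi_{Y\ast}\mathcal{O}_{T^{\ast}Y}$-modules to $\mathcal{O}_{T^{\ast}Y}$-modules by localization, as in Section \ref{grt}, produces $p_{2\ast}(df^{t})^{\ast}gr\mathcal{M}$. This uses that $p_{2}$ is proper, being a base change of the proper map $f$, so coherence is preserved.

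\textbf{Step 3.} This is the main point, and the one I expect to be the obstacle. In general $gr$ of $Rf_{\ast}$ of a filtered complex need not be $Rf_{\ast}$ of $gr$; the two agree only when the filtered complex is strict. Saito's theorem, recalled in \cite[Section 28]{Schnell2014AnOO}, says that for $f$ proper and $M$ a mixed Hodge module, the filtered complex $f_{\ast}(\mathcal{M},F)$ is strict, and each cohomology sheaf $H^{k}f_{\ast}M$, with its induced filtration, is the Hodge filtration of the mixed Hodge module $H^{k}f_{\ast}M$. Given strictness, the spectral sequence for $gr$ degenerates. Therefore $gr(f_{\ast}M)$, computed as a complex with cohomologies $gr H^{k}f_{\ast}M$, agrees with $gr_{\bullet}$ of the filtered direct image computed in Step 2.

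Some care is needed because $gr$ on $\mathcal{D}^{b}({\rm MHM})$ has to be realized through a filtered-derived model. I would fix this by representing objects via Saito's realization of $\mathcal{D}^{b}({\rm MHM}(X))$ inside the derived category of filtered $\mathcal{D}$-modules, where $gr$ is exact. I would also check that no index shift appears, by comparing with the case $f=\mathrm{id}$ and with a closed embedding. In the right-module convention used here, the Spencer complex for a closed embedding introduces no twist, unlike the pullback in Proposition \ref{pb}.
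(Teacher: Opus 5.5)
Your proposal is correct and follows the paper's route. The paper simply cites Saito's strictness of the filtered direct image for proper $f$ (via \cite[Section 28]{Schnell2014AnOO}). Your Spencer-complex (Laumon-type) computation, which identifies $gr$ of the filtered direct image with $p_{2\ast}(df^{t})^{\ast}gr\mathcal{M}$ with no twist in the right-module convention, spells out what the paper leaves implicit.

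One small precision: in the filtered derived category $gr$ always commutes with $Rf_{\ast}$. What strictness actually gives is that this filtered direct image is the filtered complex underlying $f_{\ast}M\in\mathcal{D}^{b}({\rm MHM}(Y,\mathbf{k}))$, and that is how you in fact use it in Step 3.
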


\subsection{Equivariant mixed Hodge modules and their associated graded moudules}
Following \cite{aequi} and \cite{Equivariant}, one can define the equivariant derived category of mixed Hodge modules. In this section, we introduce the associated graded functor of equivariant derived category of mixed Hodge modules and study the functorial property of this functor.

Let $G$ be a smooth affine algebraic group acting on a smooth complex variety. Recall that a $G$-resolution of $X$ is a smooth $G$-equivariant map $p: P \rightarrow X$ such that $P$ is a principal $G$-variety. A smooth morphism $\nu: (P \xrightarrow{p} X) \rightarrow (Q\xrightarrow{q} X)$ of resolutions is a smooth map $\nu: P \rightarrow Q$ such that $q\nu=p $.

An object $\mathcal{F}$ of the equivariant derived category $\mathcal{D}^{b}_{G}({\rm{MHM}}(X),\mathbf{k})$ of mixed Hodge modules on $X$ is a collection of objects $\mathcal{F}(p) \in  \mathcal{D}^{b}({\rm{MHM}}(P/G),\mathbf{k})$ for resolutions $p:P \rightarrow X$, and  isomorphisms $\alpha_{\nu}:\bar{\nu}^{\ast}\mathcal{F}(q) \rightarrow \mathcal{F}(p) $ for  smooth morphisms $\nu: (P \xrightarrow{p} X) \rightarrow (Q\xrightarrow{q} X)$ of resolutions, such that 
$\alpha_{id_{P}}=id_{\mathcal{F}(p)}$ and $\alpha_{\epsilon\circ \nu}=\alpha_{\nu} \circ \bar{\nu}^{\ast} \alpha_{\epsilon}$ for any composable smooth morphisms $P \xrightarrow{\nu} Q \xrightarrow{\epsilon} R$ of resolutions.

A morphism $\phi : \mathcal{F} \rightarrow \mathcal{G}$ in  the equivariant derived category $\mathcal{D}^{b}_{G}({\rm{MHM}}(X),\mathbf{k})$ is a collection of 
morphisms $\phi(p):\mathcal{F}(p) \rightarrow \mathcal{G}(p)$ for resolutions $p:P \rightarrow X$ such that  for any smooth morphism $\nu: P \rightarrow Q$ of resolutions the following diagram commutes
\[
\xymatrix{
	\mathcal{F}(p) \ar[d]_{\phi(p)} \ar[r]^{\alpha_{\nu}}
	& \mathcal{F}(q) \ar[d]^{\phi(q)} \\	
	\mathcal{G}(p) \ar[r]^{\alpha_{\nu}}
	& \mathcal{G}(q) .
}
\]
The category $\mathcal{D}^{b}_{G}({\rm{MHM}}(X),\mathbf{k})$ is triangulated and equipped with a t-structure, and the heart is the abelian category of equivariant mixed Hodge modules. One can see \cite[Theorem 8.3]{aequi} for details.

\subsubsection{Definition of $gr$ for equivariant categories}

We can define the functor $$gr: \mathcal{D}^{b}_{G}({\rm{MHM}}(X),\mathbf{k})' \longrightarrow  \mathcal{D}^{b}({\rm{Qcoh}}^{G \times \mathbb{C}^{\times}}(T^{\ast} X) )'$$ as the following.
 Consider the projection $pr_{2}:G\times X \rightarrow X$, where $G$ acts on $G\times X$ by $g\cdot(h,x)=(hg^{-1},g\cdot x)$, then the quotient map $G\times X \rightarrow G\times X/G$ can be identified with the action map $\sigma : G \times X \rightarrow X$. For any object $\mathcal{F}$ in $\mathcal{D}^{b}_{G}({\rm{MHM}}(X),\mathbf{k})$, $\mathcal{F} \mapsto \mathcal{F}(pr_{2}), \phi \mapsto \phi(pr_{2})$ defines a t-exact functor 
$${\rm{For}}:\mathcal{D}^{b}_{G}({\rm{MHM}}(X),\mathbf{k}) \longrightarrow  \mathcal{D}^{b}({\rm{MHM}}(X),\mathbf{k}). $$

Consider the resolution $pr_{3}:G \times G \times X \rightarrow X$ and the morphisms of resolutions given by projections $pr_{23},pr_{13}:G\times G \times  X \rightarrow G \times X $, where $G$ acts on $G \times G \times X $ by $g\cdot(h_{1},h_{2},x)=(h_{1}g^{-1},h_{2}g^{-1},g\cdot x)$.  Noticing that $\bar{pr_{23}}$ and $\bar{pr_{13}}$ can be identified with $pr_{2}$ and $\sigma$ respectively, we obtain isomorphisms $\alpha_{pr_{23}}: pr_{2}^{\ast} {\rm{For}}(\mathcal{F}) \rightarrow \mathcal{F}(pr_{3})$ and $\alpha_{pr_{13}}: \sigma^{\ast} {\rm{For}}(\mathcal{F}) \rightarrow \mathcal{F}(pr_{3})$. We denote $\theta=\alpha_{pr_{23}}^{-1} \alpha_{pr_{13}}:  \sigma^{\ast} {\rm{For}}(\mathcal{F}) \rightarrow  pr_{2}^{\ast} {\rm{For}}(\mathcal{F}).$ By \cite[Lemma 6.4.6]{achar2021perverse}, the isomorphism $\theta$ satisfies \begin{equation}\label{eq}
	b^{\ast} \theta \circ pr_{23}^{\ast} \theta=m^{\ast} \theta,
\end{equation}
where $$m:G \times G\times X \rightarrow G\times X, (g,h,x) \mapsto  (gh,x),$$
$$b:G \times G\times X \rightarrow G\times X, (g,h,x) \mapsto  (g,hx),$$
$$pr_{23}:G \times G\times X \rightarrow G\times X, (g,h,x) \mapsto  (h,x).$$

In particular, after applying the associated graded functor, one get an isomorphism
\begin{equation} \label{gr}
	gr(\theta): gr( \sigma^{\ast} {\rm{For}}(\mathcal{F}) ) \rightarrow  gr( pr_{2}^{\ast} {\rm{For}}(\mathcal{F})). 
\end{equation}

 Notice that the cotangent correspondence of $pr_{2}$ is given by 
\[
\xymatrix{
	T^{\ast}G \times T^{\ast}X \ar[d]_{\pi_{G} \times id}
	& G \times  T^{\ast}X \ar[l]_{dpr_{2}^{t}} \ar[r]^-{p} \ar[dl]_{=}\ar[d]^{\tilde{pr_{2}}}
	& G \times X \ar[d]^{pr_{2}} \\	
	G \times T^{\ast}X
	& T^{\ast}X \ar[r]^{ \pi_{X} }
	& X,
}
\]
where $\tilde{pr_{2}}$ is the projection associated to $T^{\ast} X$ and $p$ is the composition of $\tilde{pr}_{2}$ and $\pi_{X}$. Denote $\tilde{\sigma}:G \times T^{\ast} X \rightarrow T^{\ast} X$ the action map of $T^{\ast}X$, then the cotangent correspondence of $\sigma$ fits into the following diagram
\[
\xymatrix{
	T^{\ast}G \times T^{\ast}X \ar[dr]_{\pi_{G} \times id}
	& G \times X \times_{X}  T^{\ast}X \ar[l]_{d\sigma^{t}} \ar[r]^-{p_{1} } \ar[d]^{\rho}
	& G \times X \ar[d]^{id \times pr_{2}} \\	
	& G \times T^{\ast}X \ar[r]^{\tilde{p}_{1} }  \ar[d]^{\tilde{\sigma}}
	& G \times X \ar[d]^{\sigma} \\
   	& T^{\ast} X \ar[r]^{\pi_{X}} & X,
}
\]
where $\tilde{p}_{1}=id \times \pi_{X}$  and $\rho$ is the isomorphism sending an element $(g,x,\omega \in T^{\ast}_{gx}X)$ to $(g,d\sigma_{g}^{\ast}\omega \in T^{\ast}_{x}X)$. Apply $(\pi_{G} \times id)_ {\ast}$ to equation (\ref{gr}) and use Proposition \ref{pb}, one get an isomorphism 
$$\tilde{\sigma}^{\ast}gr({\rm{For}}(\mathcal{F}))= \rho_{\ast} \rho^{\ast}\tilde{\sigma}^{\ast}gr({\rm{For}}(\mathcal{F}))  \rightarrow \tilde{pr_{2}}^{\ast} gr({\rm{For}}(\mathcal{F})), $$
which is still denoted by $gr(\theta)$. The isomorphism $gr(\theta)$ satisfies the equation (\ref{eq}) for $T^{\ast}X$ by a similar argument as \cite[Lemma 6.4.6]{achar2021perverse}. Hence we define a functor 
$$ gr: \mathcal{D}^{b}_{G}({\rm{MHM}}(X),\mathbf{k})' \longrightarrow  \mathcal{D}^{b}({\rm{Qcoh}}^{G \times \mathbb{C}^{\times}}(T^{\ast} X) )', \quad \mathcal{M} \mapsto (gr({\rm{For} } (\mathcal{M})), gr(\theta) ). $$

Let $\Lambda$ be a $G \times \mathbb{C}^{\times}$-invariant closed subvariety of $T^{\ast}X$ and denote by $\mathcal{D}^{b}_{G}({\rm{MHM}}(X),\Lambda,\mathbf{k})$ the full subcategory of $\mathcal{D}^{b}_{G}({\rm{MHM}}(X),\mathbf{k})$ consisting of objects whose character varieties are contained in $\Lambda$, then $gr$ also restricts to a functor $$ gr: \mathcal{D}^{b}_{G}({\rm{MHM}}(X),\Lambda,\mathbf{k})' \longrightarrow  \mathcal{D}^{b}({\rm{Qcoh}}^{G \times \mathbb{C}^{\times}}_{\Lambda}(T^{\ast}X) )' \xrightarrow{\cong} \mathcal{D}^{b}({\rm{Qcoh}}^{G \times \mathbb{C}^{\times}}(\Lambda) )'. $$
The functor $gr$ can also induce a morphism $[gr]$ of $\mathbb{Z}[v^{\pm}]$-modules from the Grothendieck group of mixed Hodge modules to the equivariant $K$-group of $\Lambda$.

\subsubsection{Functorial property of $gr$ for equivariant categories}

Assume $f:X \rightarrow Y$ is a smooth $G$-equivariant morphism of relative dimension $d$, then the following results follow from Proposition \ref{pb} and \ref{pf}.
\begin{proposition} \label{epb}
	For any object $\mathcal{M}$ in  $\mathcal{D}^{b}_{G}({\rm{MHM}}(Y,\mathbf{k}) )'$, there is an isomorphism $$ gr(f^{\ast}\mathcal{M} ) \cong df^{{t}}_{\ast} p^{\ast}_{2}(gr \mathcal{M} \otimes p^{\ast}_{1} \omega_{X/Y} )[d]\langle -2d \rangle $$ in $\mathcal{D}^{b}({\rm{Qcoh}}^{G \times \mathbb{C}^{\times}}(T^{\ast}X) )'$.  Moreover, if $\Lambda,\Lambda'$ are close $G\times \mathbb{C}^{\times}$-invariant subvarieties of $T^{\ast}X$ and $T^{\ast}Y$ respectively and $df^{t} p_{2}^{-1}(\Lambda') \subseteq \Lambda$, then for any object $\mathcal{M}$ in $\mathcal{D}^{b}_{G}({\rm{MHM}}(Y,\Lambda',\mathbf{k}) )'$, there is an isomorphism $$gr(f^{\ast}\mathcal{M} ) \cong df^{{t}}_{\ast} p^{\ast}_{2}(gr \mathcal{M} \otimes p^{\ast}_{1} \omega_{X/Y} )[d]\langle -2d \rangle$$ in $\mathcal{D}^{b}({\rm{Qcoh}}^{G \times \mathbb{C}^{\times}}(\Lambda) )'$. Here $\omega_{X/Y}$ carries the natural $G$-equivariant structure. 
\end{proposition}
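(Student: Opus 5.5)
The plan is to reduce the statement to the non-equivariant Proposition \ref{pb}, applied to ${\rm{For}}(\mathcal{M})$. The only genuinely new point is that the resulting isomorphism respects the $G$-equivariant structures $gr(\theta)$ on both sides. Since $f$ is $G$-equivariant, ${\rm{For}}$ commutes with $f^{\ast}$. Concretely, $(f^{\ast}\mathcal{M})(pr_{2})$ is identified with $f^{\ast}{\rm{For}}(\mathcal{M})$ through the morphism of resolutions induced by $id_{G}\times f$. Proposition \ref{pb} then gives an isomorphism of $\mathbb{C}^{\times}$-equivariant sheaves on $T^{\ast}X$:
$$\Phi: gr({\rm{For}}(f^{\ast}\mathcal{M})) \cong df^{t}_{\ast}p_{2}^{\ast}(gr\,{\rm{For}}(\mathcal{M})\otimes p_{1}^{\ast}\omega_{X/Y})[d]\langle -2d\rangle.$$
The $\mathbb{C}^{\times}$-equivariance holds because the Hodge filtration shift by $d$ is recorded exactly by $\langle -2d\rangle$.

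Next I check compatibility with $gr(\theta)$. The isomorphism $\theta_{f^{\ast}\mathcal{M}}$ is $(id_{G}\times f)^{\ast}\theta_{\mathcal{M}}$, because $\sigma_{X}$ and $pr_{2}$ on $G\times X$ are the base changes of $\sigma_{Y}$ and $pr_{2}$ on $G\times Y$ along $id_{G}\times f$. The map $id_{G}\times f$ is smooth of relative dimension $d$, so I apply Proposition \ref{pb} to it and to the compositions $\sigma_{Y}\circ(id\times f)=f\circ\sigma_{X}$ and $pr_{2}\circ(id\times f)=f\circ pr_{2}$. I then need that the isomorphism of Proposition \ref{pb} is natural in $\mathcal{M}$ and compatible with composition of smooth maps. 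Naturality is clear, since the construction is the filtered tensor $\omega_{X/Y}\otimes f^{\ast}F_{\bullet}$. Compatibility with composition comes from $\omega_{Z/Y}\cong\omega_{Z/X}\otimes\omega_{X/Y}$ and the functoriality of $df^{t}$ and the fiber products. Given these, pushing forward by $(\pi_{G}\times id)_{\ast}$ and transporting through $\rho$ shows that $\Phi$ intertwines $gr(\theta_{f^{\ast}\mathcal{M}})$ with the equivariant structure on the right-hand side. That structure is built from $gr(\theta_{\mathcal{M}})$ and the natural $G$-structure on $\omega_{X/Y}$, using that $df^{t}$, $p_{1}$ and $p_{2}$ are $G\times\mathbb{C}^{\times}$-equivariant. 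Hence $\Phi$ lifts to $\mathcal{D}^{b}({\rm{Qcoh}}^{G\times\mathbb{C}^{\times}}(T^{\ast}X))'$.

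The square-root twist needs nothing extra, because both sides commute with $(\frac12)\mapsto\langle1\rangle$.

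For the supported version, suppose the characteristic variety of $\mathcal{M}$ lies in $\Lambda'$. Then $gr\,\mathcal{M}$ is set-theoretically supported on $\Lambda'$. The right-hand side is therefore supported on $df^{t}(p_{2}^{-1}(\Lambda'))\subseteq\Lambda$, and $df^{t}$ is a closed embedding for smooth $f$. So the isomorphism already lives in the full subcategory supported on $\Lambda$, and it transports through the equivalence $\mathcal{D}^{b}({\rm{Qcoh}}_{\Lambda}^{G\times\mathbb{C}^{\times}}(T^{\ast}X))'\cong \mathcal{D}^{b}({\rm{Qcoh}}^{G\times\mathbb{C}^{\times}}(\Lambda))'$ used to define $gr$ there.

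I expect the main obstacle to be the compatibility of the isomorphism of Proposition \ref{pb} with compositions and base change. This is needed to match the cocycles $gr(\theta)$; it is bookkeeping with filtered pullbacks and relative canonical bundles, in the spirit of \cite[Lemma 6.4.6]{achar2021perverse}. I would verify it at the level of filtered modules before taking $gr$.
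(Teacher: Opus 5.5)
Your proposal is correct and takes the same route as the paper. The paper gives no separate argument: it says the statement follows from the non-equivariant Proposition~\ref{pb} applied to ${\rm For}(\mathcal{M})$, using the definition $gr(\mathcal{M})=(gr({\rm For}(\mathcal{M})),gr(\theta))$; your check that the isomorphism intertwines $gr(\theta)$ (naturality and compatibility with composition along $id_{G}\times f$) and your support argument (using that $df^{t}$ is a closed embedding for smooth $f$) spell out details the paper leaves implicit.
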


\begin{proposition} \label{epf}
	Assume $f$ is proper, then for any object $\mathcal{M}$ in  $\mathcal{D}^{b}_{G}({\rm{MHM}}(X,\mathbf{k}) )$ or $\mathcal{D}^{b}_{G}({\rm{MHM}}(X,\mathbf{k}) )'$, there is an isomorphism $gr(f_{\ast}\mathcal{M} ) \cong  p_{2\ast}(df^{t})^{\ast}(gr \mathcal{M} )) $ in $\mathcal{D}^{b}({\rm{Qcoh}}^{G \times \mathbb{C}^{\times}}(T^{\ast}Y) )'$. 
	
	Moreover, if $\Lambda,\Lambda'$ are closed $G\times \mathbb{C}^{\times}$-invariant subvarieties of $T^{\ast}X$ and $T^{\ast}Y$ respectively and $p_{2}(df^{t})^{-1}(\Lambda) \subseteq \Lambda'$, then for any object $\mathcal{M}$ in $\mathcal{D}^{b}_{G}({\rm{MHM}}(X,\Lambda,\mathbf{k}) )'$, there is an isomorphism $$gr(f_{\ast}\mathcal{M} ) \cong  p_{2\ast}(df^{t})^{\ast}(gr \mathcal{M}  ) $$ in $\mathcal{D}^{b}({\rm{Qcoh}}^{G \times \mathbb{C}^{\times}}(\Lambda') )'$.
\end{proposition}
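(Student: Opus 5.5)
The plan is to reduce the equivariant statement to the non-equivariant Proposition \ref{pf}, applied to the underlying object ${\rm{For}}(\mathcal{M})$, and then check that the isomorphism respects the equivariant structures $gr(\theta)$. First, ${\rm{For}}$ commutes with $f_{\ast}$ for a $G$-equivariant proper map. This holds because the equivariant direct image is defined resolution-wise by $\bar f_{\ast}$ on $P\times_X$ and $P/G$. Taking the resolution $pr_2$ and using proper base change, we get ${\rm{For}}(f_{\ast}\mathcal{M})\cong f_{\ast}{\rm{For}}(\mathcal{M})$. Proposition \ref{pf} then yields an isomorphism of $\mathbb{C}^{\times}$-equivariant sheaves
$$gr({\rm{For}}(f_{\ast}\mathcal{M}))\cong p_{2\ast}(df^{t})^{\ast}gr({\rm{For}}(\mathcal{M})).$$
The $\langle 1\rangle$-twisted version follows formally, since both sides commute with $(\tfrac12)\mapsto\langle 1\rangle$.

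Second, I would show that this isomorphism intertwines the descent data. On the left, $\theta_{f_{\ast}\mathcal{M}}$ is the image under $(id_G\times f)_{\ast}$ of $\theta_{\mathcal{M}}$, via base change for the cartesian squares formed by $\sigma$, $pr_2$ and $id_G\times f$. I would apply Proposition \ref{pf} to the proper map $id_G\times f:G\times X\to G\times Y$. Combined with Proposition \ref{pb} for the smooth maps $\sigma$ and $pr_2$, this identifies $gr(\theta_{f_\ast\mathcal{M}})$ with the natural pullback of $gr(\theta_{\mathcal{M}})$ along the cotangent correspondence. The identification uses the compatibility of the cotangent correspondences of $\sigma_X,\sigma_Y$ and $f$, namely that $\rho_Y$ is compatible with $\rho_X$ through $df^{t}$, which is a direct check on fibres. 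This is the main obstacle. One must verify that the isomorphisms of Propositions \ref{pb} and \ref{pf} are natural and compatible with base change, so that the two ways of computing $gr$ of $(id_G\times f)_{\ast}\sigma_X^{\ast}$ agree. I would reduce this to the level of filtered $\mathcal{D}$-modules, where base change for filtered direct image along a smooth map is strict. Once the descent data agree, the cocycle identity (\ref{eq}) is automatic, and we obtain the isomorphism in $\mathcal{D}^{b}({\rm{Qcoh}}^{G\times\mathbb{C}^{\times}}(T^{\ast}Y))'$.

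For the support statement, take $\mathcal{M}$ with characteristic variety in $\Lambda$, so that $gr\mathcal{M}$ is set-theoretically supported on $\Lambda$. Then $(df^{t})^{\ast}gr\mathcal{M}$ is supported on $(df^{t})^{-1}(\Lambda)$. Since $f$ is proper, $p_2$ is proper, and $p_{2\ast}$ of it is supported on $p_2((df^{t})^{-1}(\Lambda))\subseteq\Lambda'$. Hence both sides lie in the subcategory supported on $\Lambda'$. Under the equivalence $\mathcal{D}^{b}({\rm{Qcoh}}^{G\times\mathbb{C}^{\times}}_{\Lambda'}(T^{\ast}Y))'\cong\mathcal{D}^{b}({\rm{Qcoh}}^{G\times\mathbb{C}^{\times}}(\Lambda'))'$, this gives the claimed isomorphism. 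Here the restricted functors are interpreted by the same formula on the supports.
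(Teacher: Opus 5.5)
Your proposal is correct and takes essentially the same route as the paper, which only says that the equivariant statement follows from Propositions \ref{pb} and \ref{pf}: pass to ${\rm{For}}(\mathcal{M})$, apply \ref{pf}, and check compatibility with $gr(\theta)$ using \ref{pb} for $\sigma, pr_{2}$ and \ref{pf} for $id_{G}\times f$. Your explicit treatment of the base-change compatibility of the descent data, and of the support condition via properness of $p_{2}$, fills in details that the paper leaves implicit.
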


Recall that if $H$ is a closed subgroup of $G$, then there is a forgetful functor 
$${\rm{For} }^{G}_{H}: \mathcal{D}^{b}_{G}({\rm{MHM}}(X,\mathbf{k}) ) \rightarrow \mathcal{D}^{b}_{H}({\rm{MHM}}(X,\mathbf{k}) )$$ such that ${\rm{For} }^{H}_{K}{\rm{For} }^{G}_{H} \cong {\rm{For} }^{G}_{K}$ for closed subgroup $K$ of $H$. Noticing that ${\rm{For} }^{G}_{H}$ is exactly the functor ${\rm{For} }$ when $H$ is trivial, we can show the following proposition.
\begin{proposition} \label{ef}
	For any object $\mathcal{M}$ in  $\mathcal{D}^{b}_{G}({\rm{MHM}}(X,\mathbf{k}) )$ or $\mathcal{D}^{b}_{G}({\rm{MHM}}(X,\mathbf{k}) )'$, there is an isomorphism $gr({\rm{For} }^{G}_{H}\mathcal{M} ) \cong  {\rm{For} }^{G}_{H}(gr \mathcal{M} )$ in $\mathcal{D}^{b}({\rm{Qcoh}}^{H \times \mathbb{C}^{\times}}(T^{\ast}X) )'$. Here the second ${\rm{For} }^{G}_{H}$ is the forgetful functor of quasi-coherent sheaves.
\end{proposition}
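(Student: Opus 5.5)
The plan is to compare both sides directly from the definitions of the two equivariant structures, using the description of ${\rm{For}}^{G}_{H}$ through resolutions. The underlying objects agree: since ${\rm{For}}^{H}_{1}{\rm{For}}^{G}_{H}\cong{\rm{For}}^{G}_{1}={\rm{For}}$, the underlying non-equivariant object of ${\rm{For}}^{G}_{H}\mathcal{M}$ is ${\rm{For}}(\mathcal{M})$. Hence both $gr({\rm{For}}^{G}_{H}\mathcal{M})$ and ${\rm{For}}^{G}_{H}(gr\mathcal{M})$ have underlying $\mathbb{C}^{\times}$-equivariant complex $gr({\rm{For}}(\mathcal{M}))$ on $T^{\ast}X$. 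It therefore remains to identify the two $H$-equivariant structures, that is, two isomorphisms $\tilde{\sigma}_{H}^{\ast}gr({\rm{For}}\mathcal{M})\to\tilde{pr}_{2,H}^{\ast}gr({\rm{For}}\mathcal{M})$ on $H\times T^{\ast}X$.

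Let $i:H\times X\to G\times X$ be the inclusion. It is a closed embedding, not a smooth map, but it is a morphism of $H$-resolutions of $X$ compatible with $\sigma$ and $pr_{2}$. The first step is to check that the descent datum $\theta_{H}$ of ${\rm{For}}^{G}_{H}\mathcal{M}$ equals the restriction $i^{\ast}\theta_{G}$. This follows from the construction of ${\rm{For}}^{G}_{H}$: one restricts $G$-resolutions to $H$-resolutions via $P\mapsto P$ and $P/H\to P/G$, and the isomorphisms $\alpha_{pr_{13}},\alpha_{pr_{23}}$ for $H\times H\times X$ are pulled back from those for $G\times G\times X$. On the coherent side, ${\rm{For}}^{G}_{H}$ of a $G$-equivariant sheaf $(\mathcal{F},\phi)$ is by definition $(\mathcal{F},(i\times id_{T^{\ast}X})^{\ast}\phi)$. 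So the claim reduces to the identity
$$gr(i^{\ast}\theta_{G})=(i\times id)^{\ast}gr(\theta_{G})$$
as maps on $H\times T^{\ast}X$.

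The second step, which I expect to be the main obstacle, is this compatibility of $gr$ with restriction along $i$. Proposition \ref{pb} only treats smooth pullbacks, so it cannot be applied to $i$ directly. My plan is to avoid pulling back mixed Hodge modules along $i$ at all. Both $\sigma^{\ast}{\rm{For}}\mathcal{M}$ and $pr_{2}^{\ast}{\rm{For}}\mathcal{M}$ are smooth pullbacks, and Proposition \ref{pb} identifies their graded modules with pullbacks of $gr({\rm{For}}\mathcal{M})$ along $\tilde{\sigma}$ and $\tilde{pr}_{2}$; the same holds for $H$. These identifications arise from the explicit filtration $\omega\otimes f^{\ast}F_{\bullet}$ in the proof of Proposition \ref{pb}, which is natural in the base, so they commute with restriction of the base from $G\times X$ to $H\times X$. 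Moreover $\theta_{G}$ is a morphism of filtered $\mathcal{D}$-modules on $G\times X$ which, as an $\mathcal{O}$-linear map of relative objects, is determined by its action on $f^{\ast}F_{\bullet}$. Restricting this filtered $\mathcal{O}$-module map to $H\times X$ therefore gives $\theta_{H}$ at the filtered level, and taking $gr$ commutes with this $\mathcal{O}$-linear restriction because the pieces $\omega\otimes f^{\ast}F_{p}$ are locally free relative to the $G$ and $H$ directions.

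One more point needs care. When passing to $T^{\ast}X$, the diagram with $\rho$ and $(\pi_{G}\times id)_{\ast}$ for $G$ restricts to the corresponding diagram for $H$, because $\rho$ is defined pointwise by $d\sigma_{g}^{\ast}$. Combining these two steps gives the desired isomorphism in $\mathcal{D}^{b}({\rm{Qcoh}}^{H\times\mathbb{C}^{\times}}(T^{\ast}X))$. The version with half Tate twists then follows formally, since both functors commute with $(\tfrac12)\mapsto\langle1\rangle$.
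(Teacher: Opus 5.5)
Your reduction is the paper's: its only justification is the remark ${\rm For}^{H}_{1}{\rm For}^{G}_{H}\cong{\rm For}^{G}_{1}={\rm For}$, so both sides have underlying object $gr({\rm For}\,\mathcal{M})$. You rightly go further and note that the $H$-equivariant structures must still be compared, the coherent one being $(i\times{\rm id})^{\ast}gr(\theta_{G})$. The gap is in the step that makes this comparison.

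The sentence ``restricting this filtered $\mathcal{O}$-module map to $H\times X$ therefore gives $\theta_{H}$ at the filtered level'' is asserted, not derived. Knowing that $\theta_{G}$ is determined by its action on the pieces $\omega\otimes f^{\ast}F_{\bullet}$ says nothing about whether its naive $\mathcal{O}$-linear restriction equals the descent datum of ${\rm For}^{G}_{H}\mathcal{M}$. Step 1 does not supply this either. The resolutions $H\times X$ and $H\times H\times X$ defining $\theta_{H}$ are not $G$-resolutions. Their inclusions into $G\times X$ and $G\times G\times X$ are not smooth morphisms of resolutions, so the data $\alpha_{\nu}$ give no way to ``pull back'' $\alpha_{pr_{13}},\alpha_{pr_{23}}$ along them. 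What Step 1 can honestly deliver is $\theta_{H}=i^{\ast}\theta_{G}$, with $i^{\ast}$ the (shifted) mixed Hodge module pullback along the closed embedding $H\times X\hookrightarrow G\times X$. So you have not avoided pulling back along $i$; you have moved it into Step 1. Step 2 then needs the missing fact that, for these objects, Saito's $i^{\ast}$ is computed on filtered $\mathcal{D}$-modules by naive restriction with the induced Hodge filtration.

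That fact is Saito's result on non-characteristic inverse images, and your flatness remark supplies half of its hypothesis. Read ``locally free relative to the $G$-direction'' as flatness over $G$ of $\sigma^{\ast}F_{p}$ and $\sigma^{\ast}gr^{F}_{p}$ (likewise for $pr_{2}$). This holds because $\sigma$ becomes $pr_{2}$ after the automorphism $(g,x)\mapsto(g,gx)$ of $G\times X$ over $G$, and it gives the Tor-vanishing. The other half is that $i$ is non-characteristic for $\sigma^{\ast}{\rm For}\,\mathcal{M}\cong pr_{2}^{\ast}{\rm For}\,\mathcal{M}$. This holds because the characteristic variety of $pr_{2}^{\ast}{\rm For}\,\mathcal{M}$ is $T^{\ast}_{G}G\times C$, with $C$ that of ${\rm For}\,\mathcal{M}$, and this meets the conormal bundle $T^{\ast}_{H}G\times T^{\ast}_{X}X$ only in the zero section.

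With this theorem cited, one piece of bookkeeping remains. The shifts $[\dim G]$ versus $[\dim H]$, the Tate twists, and the factor $\det N^{\vee}$ separating $i^{\ast}\omega_{G\times X/X}$ from $\omega_{H\times X/X}$ all occur identically in the source and target of $\theta$, so they cancel. Your exactness argument then gives $gr(i^{\ast}\theta_{G})=(i\times{\rm id})^{\ast}gr(\theta_{G})$.

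Alternatively, in the heart you can argue by Riemann--Hilbert. The de Rham functor commutes with non-characteristic restriction, so the naive restriction of $\theta_{G}$ and $\theta_{H}$ both correspond to $i^{-1}{\rm rat}(\theta_{G})$ up to shift. Hence they agree as $\mathcal{D}$-module maps, and therefore as filtered maps.

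Either way, some input on non-characteristic restriction is indispensable. Once it is supplied, your argument is complete and more explicit than the paper's.
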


Assume $H$ is a close normal subgroup of $G$ and $X$ is a $G/H$-variety, then there is a inflation functor ${\rm{Infl}}^{G}_{G/H}: \mathcal{D}^{b}_{G/H}({\rm{MHM}}(X,\mathbf{k}) )\rightarrow \mathcal{D}^{b}_{G}({\rm{MHM}}(X,\mathbf{k}) )$ sending $\mathcal{M}$ to ${\rm{Infl}}^{G}_{G/H}\mathcal{M}(p)=\mathcal{M}(\bar{p})$, where $\bar{p}$ is the $G/H$-resolution $P/H \rightarrow X$ induced by the $G$-resolution $p: P \rightarrow X$. 

We assume $X$ is a $G$-variety and $H$ is a closed normal subgroup  of $G$ such that $X$ is a principal $H$-variety. Let $\pi: X \rightarrow X/H$ be the quotient map, then the quotient equivalence (equivariant descent) in \cite[Theorem 6.5.9]{achar2021perverse} is defined by 
$$ \pi^{\ast} {\rm{Infl}}^{G}_{G/H} : \mathcal{D}^{b}_{G/H}({\rm{MHM}}(X/H,\mathbf{k}) ) \rightarrow \mathcal{D}^{b}_{G}({\rm{MHM}}(X,\mathbf{k}) ). $$

With the notations above, one has an isomorphism $\mu_{H}^{-1}(0)/H \cong T^{\ast}(X/H)$. The inflation functor of quasi-coherent sheaves gives an equivalence $$\tilde{\pi}^{\ast} {\rm{Infl}}^{G}_{G/H}: \mathcal{D}^{b}({\rm{Qcoh}}^{G/H \times \mathbb{C}^{\times}}(T^{\ast}(X/H))  \rightarrow  \mathcal{D}^{b}({\rm{Qcoh}}^{G \times \mathbb{C}^{\times}}(  \mu_{H}^{-1}(0) ). $$ Here $\mu_{H}: T^{\ast}X \rightarrow Lie(H)$ is the moment map and $\tilde{\pi}:\mu_{H}^{-1}(0) \rightarrow T^{\ast}(X/H) $ is the quotient map.

\begin{proposition}\label{eqo}
	With the notations above, for any object $\mathcal{M}$ in $\mathcal{D}^{b}_{G/H}({\rm{MHM}}(X/H,\mathbf{k}) )$ or in $\mathcal{D}^{b}_{G/H}({\rm{MHM}}(X/H,\mathbf{k}) )'$,  one has the following isomorphism
	$$ gr ( \pi^{\ast} {\rm{Infl}}^{G}_{G/H}(\mathcal{M} )\cong \tilde{\pi}^{\ast} {\rm{Infl}}^{G}_{G/H} gr(\mathcal{M})\otimes p^{\ast}_{1}\omega_{X/(X/H)}[{\rm{dim}}(H)]\langle -2 {\rm{dim}}(H) \rangle  $$
	 in  $ \mathcal{D}^{b}({\rm{Qcoh}}^{G \times \mathbb{C}^{\times}}(  \mu_{H}^{-1}(0) )'$.
\end{proposition}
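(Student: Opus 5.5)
The plan is to reduce the statement to two ingredients: Proposition \ref{epb}, applied to the smooth morphism $\pi$, and the fact that $gr$ is compatible with inflation. Write $\mathcal{N}={\rm{Infl}}^{G}_{G/H}\mathcal{M}\in \mathcal{D}^{b}_{G}({\rm{MHM}}(X/H,\mathbf{k}))'$.

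The first step is to check that $gr({\rm{Infl}}^{G}_{G/H}\mathcal{M})\cong {\rm{Infl}}^{G}_{G/H}gr(\mathcal{M})$ in $\mathcal{D}^{b}({\rm{Qcoh}}^{G\times\mathbb{C}^{\times}}(T^{\ast}(X/H)))'$.
\begin{itemize}
\item The underlying object is unchanged: ${\rm{For}}({\rm{Infl}}\,\mathcal{M})={\rm{For}}(\mathcal{M})$.
\item The descent datum $\theta_{G}$ of the inflated object is the pullback of $\theta_{G/H}$ along $G\times (X/H)\to (G/H)\times (X/H)$. This holds because the resolutions $G\times (X/H)\to X/H$ and $G^{2}\times (X/H)\to X/H$ map to the corresponding $G/H$-resolutions after quotienting by $H$.
\item Applying Proposition \ref{pb} to the smooth map $q\times id:G\times (X/H)\to (G/H)\times (X/H)$ shows that $gr$ of this pullback is the pullback of $gr(\theta_{G/H})$ along $G\times T^{\ast}(X/H)\to (G/H)\times T^{\ast}(X/H)$. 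Here I use the identification $(\pi_{G}\times id)_{\ast}$ that was already used in defining $gr(\theta)$, where the conormal factor is killed.
\item By definition, that pullback is the equivariant structure of ${\rm{Infl}}\,gr(\mathcal{M})$.
\end{itemize}

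The second step applies Proposition \ref{epb} to the $G$-equivariant smooth morphism $\pi:X\to X/H$ of relative dimension $\dim H$. This gives
$$gr(\pi^{\ast}\mathcal{N})\cong d\pi^{t}_{\ast}p_{2}^{\ast}\bigl(gr\mathcal{N}\otimes p_{1}^{\ast}\omega_{X/(X/H)}\bigr)[\dim H]\langle -2\dim H\rangle.$$
Here $d\pi^{t}:X\times_{X/H}T^{\ast}(X/H)\to T^{\ast}X$ is a closed embedding. Its image is exactly $\mu_{H}^{-1}(0)$, since a covector on $X$ comes from $X/H$ if and only if it kills the tangent directions of the $H$-orbits. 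Under this identification:
\begin{itemize}
\item $p_{2}$ becomes the quotient map $\tilde{\pi}:\mu_{H}^{-1}(0)\to \mu_{H}^{-1}(0)/H\cong T^{\ast}(X/H)$;
\item $p_{2}$ is $G$-equivariant, so the $G$-equivariant structure on $p_{2}^{\ast}{\rm{Infl}}\,gr(\mathcal{M})$ is the one defining $\tilde{\pi}^{\ast}{\rm{Infl}}^{G}_{G/H}$.
\end{itemize}
Since the resulting sheaf is supported on $\mu_{H}^{-1}(0)$, I regard it there, using the support variant in Proposition \ref{epb} with $\Lambda=\mu_{H}^{-1}(0)$ and $\Lambda'=T^{\ast}(X/H)$. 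Combining this with the first step gives the claimed isomorphism. For the $'$-version, the functors commute with $(\frac12)$ and $\langle1\rangle$ by construction.

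I expect the first step to be the main obstacle. The difficulty is checking that the cocycle isomorphisms, and not just the underlying objects, match. The cleanest route is to choose resolutions compatibly and invoke the naturality of the isomorphism in Proposition \ref{pb} with respect to composition of smooth maps. That naturality is not stated explicitly earlier, so I would verify it directly on the level of filtrations, $\omega_{X/Y}\otimes f^{\ast}F_{\bullet}$, where it is evident.
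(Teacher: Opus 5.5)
Your proposal is correct and follows the paper's proof. You apply Proposition \ref{epb} to the smooth $G$-equivariant map $\pi$, whose cotangent correspondence is $\mu_{H}^{-1}(0)\to T^{\ast}(X/H)$ via $\tilde{\pi}$, and combine this with the compatibility of $gr$ with inflation. The paper gets that compatibility by citing $\mathrm{For}^{G}\mathrm{Infl}^{G}_{G/H}\cong \mathrm{For}^{G/H}$ \cite[Lemma 6.5.8]{achar2021perverse}; your additional check that the descent isomorphisms $\theta$ also match fills in a detail the paper leaves implicit.
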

\begin{proof}
	Notice that the cotangent correspondence of $\pi$ is given by 
\[
\xymatrix{
	T^{\ast}X \ar[d]_{\pi_{X}}
	& \mu_{H}^{-1}(0) \ar[l]_{d\pi^{t}} \ar[r]^{p_{1}} \ar[dl]_{p_{1}} \ar[d]^{\tilde{\pi}}
	& X \ar[d]^{\pi} \\	
	X
	& T^{\ast}(X/H) \ar[r]^{\pi_{X/H}}
	& X/H.
}
\]
	By Proposition \ref{epb}, one has $$gr ( \pi^{\ast} {\rm{Infl}}^{G}_{G/H}(\mathcal{M} ))\cong  \tilde{\pi}^{\ast} gr ({\rm{Infl}}^{G}_{G/H}(\mathcal{M} ) )\otimes \omega_{X/(X/H)} [{\rm{dim}}(H)]\langle - 2 {\rm{dim}}(H) \rangle  .$$ By \cite[Lemma 6.5.8]{achar2021perverse}, ${\rm{For}}^{G}{\rm{Infl}}^{G}_{G/H} \cong {\rm{For}}^{G/H}$, hence the functor $gr$ commutes with inflation functors and the proof is finished.
\end{proof}

Assume $X$ is a $H$-variety and let $i :X \rightarrow G\times^{H} X$ be the natural embedding $x \mapsto (e,x)$, where $e$ is the identity of $G$. The induction equivalence $\mathcal{D}^{b}_{G}({\rm{MHM}}(G\times^{H} X,\mathbf{k}) ) \xrightarrow{\cong} \mathcal{D}^{b}_{H}({\rm{MHM}}(X,\mathbf{k}))$ is given by the composition $i^{\ast}[-{\rm{dim}} (G/H)] {\rm{For}}^{G}_{H}$, and we denote its inverse by ${\rm{Ind}}^{G}_{H}$. However, the map $i$ is not smooth in general, and we can not apply Proposition \ref{epb} directly, so we need to decompose ${\rm{Ind}}^{G}_{H}$ into some quotient equivalence. Following the proof of \cite[Theorem 6.5.10]{achar2021perverse}, we consider the following commutative diagram
\[
\xymatrix{ 
	X \ar[rr]^{i}&  & G\times^{H} X
 	\\
	H \times X \ar[u]^{\sigma} \ar[rr]_{\tilde{i}} \ar[dr]_{pr_{2}}
	& 
	& G \times X \ar[dl]^{pr_{2}} \ar[u]_{\pi} \\	
	& X	& 
}
\]
where $\pi$ is the quotient map, and $G \times H$ acts on $G \times X$ by $(g,h) \cdot (g',x)=(gg'h^{-1},h \cdot x) $. Then $i^{\ast} {\rm{For}}^{G}_{H}$ is the composition of the following equivalences (or their inverses) 
$$ pr_{2}^{\ast} {\rm{Infl}} ^{G \times H}_{1\times H}:  \mathcal{D}^{b}_{H}({\rm{MHM}}(X,\mathbf{k})) \rightarrow \mathcal{D}^{b}_{G \times H}({\rm{MHM}}(G \times X,\mathbf{k})) ,$$
$$ pr_{2}^{\ast} {\rm{Infl}} ^{H \times H}_{1\times H}:  \mathcal{D}^{b}_{H}({\rm{MHM}}(X,\mathbf{k})) \rightarrow \mathcal{D}^{b}_{H \times H}({\rm{MHM}}(H \times X,\mathbf{k})) ,$$
$$ \sigma^{\ast} {\rm{Infl}} ^{H \times H}_{H\times 1}:  \mathcal{D}^{b}_{H}({\rm{MHM}}(X,\mathbf{k})) \rightarrow \mathcal{D}^{b}_{H \times H}({\rm{MHM}}(H \times X,\mathbf{k})) ,$$
and 
$$ \pi^{\ast} {\rm{Infl}} ^{G \times H}_{G\times 1}:  \mathcal{D}^{b}_{G}({\rm{MHM}}(G \times ^{H}X,\mathbf{k})) \rightarrow \mathcal{D}^{b}_{G \times H}({\rm{MHM}}(G \times X,\mathbf{k})) .$$
Let $\Lambda \subseteq T^{\ast}X$ be a $H\times \mathbb{C}^{\times}$-invariant subvariety and denote $\Lambda'= G\times^{H} \Lambda \subseteq T^{\ast}(G\times^{H}X)$.

\begin{proposition}
	With the notation above, for any object $\mathcal{M}$ in  $\mathcal{D}^{b}_{H}({\rm{MHM}}(X,\Lambda,\mathbf{k}) )'$, there is an isomorphism 
	$$ gr ( {\rm{Ind}}^{G}_{H} (\mathcal{M})) \cong {\rm{Ind}}^{G}_{H}( gr (  \mathcal{M}) \otimes p^{\ast}_{1}\omega^{-1}_{X/G \times^{H} X})[{\rm{dim}} G/H ]\langle -2 {\rm{dim}} G/H  \rangle $$ 
	in $\mathcal{D}^{b}({\rm{Qcoh}}^{G \times \mathbb{C}^{\times}}(\Lambda') )'$. Here the second ${\rm{Ind}}^{G}_{H}$ is the induction equivalence $\mathcal{D}^{b}({\rm{Qcoh}}^{H \times \mathbb{C}^{\times}}(\Lambda) ) \xrightarrow{\cong} \mathcal{D}^{b}({\rm{Qcoh}}^{G \times \mathbb{C}^{\times}}(\Lambda') )$  of quasi-coherent sheaves, and $p_{1}:\Lambda \rightarrow X$ is the natural projection.
\end{proposition}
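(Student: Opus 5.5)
We use the decomposition of $i^{\ast}{\rm{For}}^{G}_{H}$ displayed above and compute $gr$ on each piece. Each piece is a quotient equivalence or a smooth pullback composed with inflation, so Propositions \ref{epb}, \ref{ef} and \ref{eqo} apply and the non-smooth map $i$ never appears. Concretely, set $\mathcal{N}={\rm{Ind}}^{G}_{H}(\mathcal{M})$. By construction of the induction equivalence (following \cite[Theorem 6.5.10]{achar2021perverse}) there is a natural isomorphism
$$\pi^{\ast}{\rm{Infl}}^{G\times H}_{G\times 1}(\mathcal{N}) \cong pr_{2}^{\ast}{\rm{Infl}}^{G\times H}_{1\times H}(\mathcal{M})$$
in $\mathcal{D}^{b}_{G\times H}({\rm{MHM}}(G\times X,\mathbf{k}))'$. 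The plan is to apply $gr$ to both sides and compare.

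For the left side, $\pi:G\times X\to G\times^{H}X$ is a principal bundle for the free action of $1\times H$, and $G\times H$ acts compatibly. So Proposition \ref{eqo}, applied with the normal subgroup $1\times H$, gives
$$gr(\pi^{\ast}{\rm{Infl}}(\mathcal{N}))\cong \tilde{\pi}^{\ast}{\rm{Infl}}\,gr(\mathcal{N})\otimes p_{1}^{\ast}\omega_{G\times X/G\times^{H}X}[{\rm{dim}}H]\langle -2{\rm{dim}}H\rangle$$
on $\mu_{1\times H}^{-1}(0)\subseteq T^{\ast}(G\times X)$.

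For the right side, $pr_{2}$ is smooth of relative dimension ${\rm{dim}}G$, so Proposition \ref{epb} together with the fact that $gr$ commutes with inflation (as in the proof of Proposition \ref{eqo}) gives
$$gr(pr_{2}^{\ast}{\rm{Infl}}(\mathcal{M}))\cong (dpr_{2}^{t})_{\ast}\tilde{pr_{2}}^{\ast}gr(\mathcal{M})[{\rm{dim}}G]\langle -2{\rm{dim}}G\rangle .$$
Here $\omega_{G\times X/X}$ is trivial as a sheaf but carries a $G\times H$-equivariant structure. This sheaf is supported on the zero section of $T^{\ast}G$, i.e. on $G\times T^{\ast}X\subseteq T^{\ast}(G\times X)$.

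Comparing the two sides gives
$$\tilde{\pi}^{\ast}{\rm{Infl}}\,gr(\mathcal{N})\cong (dpr_{2}^{t})_{\ast}\tilde{pr_{2}}^{\ast}gr(\mathcal{M})\otimes(\text{line bundle})[{\rm{dim}}G-{\rm{dim}}H]\langle -2({\rm{dim}}G-{\rm{dim}}H)\rangle ,$$
and the shifts combine to $[{\rm{dim}}G/H]\langle -2{\rm{dim}}G/H\rangle$ as required. It remains to check two things: that the support lies in $\mu_{1\times H}^{-1}(0)$, and to identify the right side with $\tilde{\pi}^{\ast}{\rm{Infl}}$ of the coherent induction ${\rm{Ind}}^{G}_{H}(gr\mathcal{M})$. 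For the second, note that $\mu^{-1}_{1\times H}(0)\cap(G\times T^{\ast}X)$, modulo $1\times H$, is $G\times^{H}T^{\ast}X$ cut down by the moment condition in the $\mathfrak{g}/\mathfrak{h}$-directions. On supports this is compatible with $\Lambda'=G\times^{H}\Lambda$. Since $\tilde{\pi}^{\ast}{\rm{Infl}}$ is an equivalence, we can cancel it. This yields $gr(\mathcal{N})\cong{\rm{Ind}}^{G}_{H}(gr\mathcal{M}\otimes L)[{\rm{dim}}G/H]\langle-2{\rm{dim}}G/H\rangle$ for some line bundle $L$ pulled back from $X$.

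The main obstacle is bookkeeping the equivariant line bundle $L$. It is the ratio of $\omega_{G\times X/X}$, which is trivial with its $G\times H$-structure, and $\omega_{G\times X/G\times^{H}X}$, which is $\omega_{H}$ twisted by the adjoint action. Restricted along $\tilde{i}:H\times X\to G\times X$ and then to $X$, this ratio should become $\omega^{-1}_{X/G\times^{H}X}$, i.e. the determinant of $\mathfrak{g}/\mathfrak{h}$ with its $H$-action. To do this I would use the second route, through $H\times X$, with the pair $\sigma^{\ast}$ and $pr_{2}^{\ast}$. This pins down the normalization of the equivariant structures and avoids sign and shift ambiguity. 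A secondary check is that the support condition on $\Lambda$ is preserved at each step; this follows from the "moreover" parts of Proposition \ref{epb}, since all the correspondences involved are compatible with $\Lambda\mapsto G\times^{H}\Lambda$.
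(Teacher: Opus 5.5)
Your argument is sound, and it is a leaner variant of the paper's proof rather than the same one.

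The paper goes through $H\times X$. It first computes $gr\circ\tilde{i}^{\ast}$ on objects pulled back along $pr_{2}$, by writing $\tilde{i}^{\ast}$ there as a composite of the $pr_{2}$-quotient equivalences for $G\times X$ and $H\times X$ (Proposition \ref{eqo} twice). It then transports this through the square $\pi\tilde{i}=i\sigma$ and its cotangent counterpart $\tilde{\pi}\tilde{\iota}=\iota\tilde{\sigma}$, applying Proposition \ref{eqo} to $\sigma$ and $\pi$. That route follows the definition of ${\rm{Ind}}^{G}_{H}$ as an inverse of restriction along $i$, and of the coherent induction as an inverse of $\iota^{\ast}$.

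You stay on $G\times X$ and compare only the two quotient presentations, $\pi$ (by $1\times H$) and $pr_{2}$ (by $G\times 1$). This works because both inductions are characterized by descent:
\begin{itemize}
\item On the Hodge side, $\pi^{\ast}\mathcal{N}\cong pr_{2}^{\ast}i^{\ast}\mathcal{N}$ follows from the $G$-equivariance of $\mathcal{N}$, since $\pi=a\circ({\rm{id}}_{G}\times i)$ with $a$ the action map.
\item On the coherent side, $\tilde{\pi}^{-1}(\Lambda')=G\times\Lambda$ inside the zero section of $T^{\ast}G$, and $\tilde{\pi}^{\ast}{\rm{Infl}}\,{\rm{Ind}}^{G}_{H}(\mathcal{F})$ is $pr_{2}^{\ast}\mathcal{F}$ there, which is essentially the definition.
\end{itemize}
This removes $\tilde{i}$, $\sigma$ and $H\times X$ altogether, and the line bundle can be read off directly. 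The restriction of $\omega_{pr_{2}}\otimes\omega_{\pi}^{-1}$ to $\{e\}\times X$ is $\det\mathfrak{g}^{\ast}\otimes\det\mathfrak{h}=\det(\mathfrak{g}/\mathfrak{h})^{\ast}$ as a representation of the diagonal $H$, and this is $\omega^{-1}_{X/G\times^{H}X}$. Your gloss ``the determinant of $\mathfrak{g}/\mathfrak{h}$'' has it inverted. So the detour through $H\times X$ that you propose for this bookkeeping is unnecessary.

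Two points need tidying.

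First, the shift normalization. The paper defines ${\rm{Ind}}^{G}_{H}$ as the inverse of $i^{\ast}[-{\rm{dim}}\, G/H]{\rm{For}}^{G}_{H}$, so literally $i^{\ast}\mathcal{N}\cong\mathcal{M}[{\rm{dim}}\, G/H]$. Your starting isomorphism should then carry an extra $[{\rm{dim}}\, G/H]$, and the same tally would give $[2\,{\rm{dim}}\, G/H]$. The stated $[{\rm{dim}}\, G/H]$ comes out only if ${\rm{Ind}}^{G}_{H}$ inverts the unshifted $i^{\ast}{\rm{For}}^{G}_{H}$, which is also what the paper's own shift count implicitly does. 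State this convention explicitly, since in $K$-theory it changes the result by $(-1)^{{\rm{dim}}\, G/H}$.

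Second, two small inaccuracies:
\begin{itemize}
\item On the zero section $G\times T^{\ast}X$, the equation $\mu_{1\times H}=0$ reduces to $\mu_{H}=0$. This is an $\mathfrak{h}^{\ast}$-valued condition, not one ``in the $\mathfrak{g}/\mathfrak{h}$-directions''. It is why you need $\Lambda\subseteq\mu_{H}^{-1}(0)$, which is automatic for supports of $H$-equivariant objects.
\item $\omega_{G\times X/X}$ carries, up to inversion, the character $\det{\rm{Ad}}_{\mathfrak{g}}|_{H}$. This is trivial for $G=\mathbf{G}_{\gamma}$ but not in general, although the ratio above is $\det(\mathfrak{g}/\mathfrak{h})^{\ast}$ regardless.
\end{itemize}
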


\begin{proof}
	The cotangent correspondence of 
\[
\xymatrix{ 
	H \times X  \ar[rr]_{\tilde{i}} \ar[dr]_{pr_{2}}
	& 
	& G \times X \ar[dl]^{pr_{2}}  \\	
	& X	& 
}
\]
is given by 
\[
\xymatrix{ 
	H \times T^{\ast}X  \ar[rr]_{\tilde{\iota}} \ar[dr]_{pr_{2}}
	& 
	& G \times T^{\ast}X \ar[dl]^{pr_{2}}  \\	
	& T^{\ast}X	&. 
}
\]
After restricting to $\Lambda$, one get $gr(\tilde{i}^{\ast} {\rm{Infl}}^{G\times H}_{H \times H} \mathcal{M}') \cong \tilde{\iota}^{\ast}  gr(\mathcal{M}')  \otimes p^{\ast}\omega_{H \times X/G \times X}[-{\rm{dim}} G/H ]\langle 2 {\rm{dim}} G/H  \rangle  $ by Proposition \ref{eqo}. Here $p: H \times T^{\ast}X \rightarrow H \times X$ is the natural projection, and we also denote its restriction on  $H \times \Lambda$ by $p$.

After restricting the cotangent correspondence of $\sigma : H \times X \rightarrow X$ to $\tilde{\sigma}: H \times \Lambda \rightarrow \Lambda$, the cotangent correspondences in the commutative diagram 
\[
\xymatrix{ 
	X \ar[r]^{i} &  G\times^{H} X
	\\
	H \times X \ar[u]^{\sigma} \ar[r]_{\tilde{i}} 
	& G \times X  \ar[u]_{\pi}
}
\]
is given by 
\[
\xymatrix{ 
	\Lambda \ar[r]^{\iota}&   G\times^{H} \Lambda
	\\
	H \times \Lambda \ar[u]^{\tilde{\sigma}} \ar[r]_{\tilde{\iota}} 
	& G \times \Lambda  \ar[u]_{\tilde{\pi}}.
}
\]
Use Proposition \ref{eqo} again, the proof is finished.
\end{proof}

\section{Mixed Hodge modules on quivers and their associated graded moudles}
In this section, we study the associated graded modules of mixed Hodge modules on the moduli space of quiver representations, and we always take $\mathbf{k}=\mathbb{Q} \subseteq \mathbb{R}$ to be the coefficients of underlying constructible sheaves. 
\subsection{Borcherds-Bozec algabra via mixed Hodge modules on quivers}\label{BB}
\subsubsection{Convolution}
Let $Q=(Q_{0},Q_{1},s,t)$ be a quiver in which loops and multiple arrows are allowed, and $\alpha \in \mathbb{N}Q_{0}$ be a dimension vector. The moduli space of representation of $Q$ with dimension $\alpha$ is the vector space 
$$ \mathbf{E}_{\alpha}=\mathbf{E}_{\alpha,Q} = \bigoplus_{e\in Q_{1}} {\rm{Hom}} (\mathbb{C}^{\alpha_{s(e)}} ,\mathbb{C}^{\alpha_{t(e)}}  ) $$
together with the action of $\mathbf{G}_{\alpha}=\prod_{i \in Q_{0}} GL(\mathbb{C}^{\alpha_{i}})$ given by
$ g \cdot x_{e} =  g_{t(e)} x_{e} g_{s(e)}^{-1}, $ where $x_{e}$ is an element of ${\rm{Hom}} (\mathbb{C}^{\alpha_{s(e)}} ,\mathbb{C}^{\alpha_{t(e)}}  )$.

Given dimension vector $\alpha,\beta$ and $\gamma =\alpha +\beta$, and fix a graded subspace $\mathbf{V} \subseteq \bigoplus_{i \in Q_{0}} \mathbb{C}^{\gamma_{i}}$ of dimension vector $\beta$, we consider the following diagram
$$ \mathbf{E}_{\alpha} \times \mathbf{E}_{\beta} \xleftarrow{p} \mathbf{F} \xrightarrow{ i} \mathbf{G}_{\gamma} \times^{\mathbf{P}} \mathbf{F} \xrightarrow{q} \mathbf{E}_{\gamma},  $$
where $\mathbf{F}$ is the subspace of $\mathbf{E}_{\gamma}$ consisting of elements stablizing the subspace $\mathbf{V}$, $\mathbf{P}$ is the parabolic subgroup of $\mathbf{G}_{\gamma}$ stablizing $\mathbf{V}$, the morphism $p$ is the natural projection, $i$ is the natural embedding and $q$ sends $(g,x)$ to $g \cdot x$.

Denote the equivariant derived category $\mathcal{D}^{b}_{\mathbf{G}_{\alpha}} ({\rm{MHM}}( \mathbf{E}_{\alpha},\mathbb{Q} ))'$ by   $\mathcal{D}_{\alpha}=\mathcal{D}_{\alpha,Q}$, the convolution $\star$ of $ \coprod_{\alpha} \mathcal{D}_{\alpha} $ is defined by 
$$ \mathcal{M} \star \mathcal{N}= q_{\ast} \circ {\rm{Ind}}^{\mathbf{G}_{\gamma}}_{\mathbf{P}}  \circ p^{\ast} {\rm{Infl}}^{\mathbf{P}}_{\mathbf{G}_{\alpha} \times \mathbf{G}_{\beta}}  (  \mathcal{M} \boxtimes \mathcal{N}) [\langle \langle  \alpha ,\beta \rangle \rangle] (\frac{1}{2 } \langle \langle  \alpha ,\beta \rangle \rangle), $$
where $\langle \langle  \alpha ,\beta \rangle \rangle =\sum\limits_{i \in Q_{0}} \alpha_{i} \beta_{i} + \sum\limits_{e \in Q_{1}} \alpha_{s(e)} \beta_{t(e)}$ is the geometric paring of the quiver $Q$.

\subsubsection{Borcherds-Bozec algabra}
Following \cite{Bo15}, take $m\in \mathbb{N}$ and two sequences $\mathbf{i}=(i_{1},i_{2}.\cdots,i_{m} )$ and $\mathbf{a}=(a_{1},a_{2},\cdots,a_{m})$ of $Q_{0}$ and $\mathbb{N}$ respectively.
If $\sum\limits_{k=1}^{m}a_{k}i_{k}=\alpha $, we write $\underline{\rm{dim}}(\mathbf{i},\mathbf{a})=\alpha$. Take a $Q_{0}$-graded space $\mathbf{V}$ of dimension vector $\alpha$, define varieties 
$$\mathcal{F}_{\mathbf{i},\mathbf{a}}=\{\mathbf{W}_{\bullet}=(0=\mathbf{W}_{0} \subseteq \mathbf{W}_{1} \subseteq \cdots \subseteq \mathbf{W}_{m}=\mathbf{V})| \text{ for any } k, {\rm{dim}} \mathbf{W}_{k}/\mathbf{W}_{k-1}=a_{k}i_{k}.   \}$$ 
$$ \tilde{\mathbf{E}}_{\mathbf{i},\mathbf{a}}=\{ (x,\mathbf{W}_{\bullet})| x\in \mathbf{E}_{\alpha}, \mathbf{W}_{\bullet} \in \mathcal{F}_{\mathbf{i},\mathbf{a}},\text{ and for any } k \geqslant 1, x(\mathbf{W}_{k}) \subseteq \mathbf{W}_{k-1}. \} $$

Consider the mixed Hodge module $\underline{\mathbb{Q}}^{H}_{\tilde{\mathbf{E}}_{\mathbf{i},\mathbf{a}}}$ on $\tilde{\mathbf{E}}_{\mathbf{i},\mathbf{a}}$ and denote it by $\mathbf{1}_{\tilde{\mathbf{E}}_{\mathbf{i},\mathbf{a}}}$, then $\mathbf{1}_{\tilde{\mathbf{E}}_{\mathbf{i},\mathbf{a}}}$ is a pure mixed Hodge module of weight $0$. Let $\pi_{\mathbf{i},\mathbf{a}}: \tilde{\mathbf{E}}_{\mathbf{i},\mathbf{a}} \rightarrow \mathbf{E}_{\alpha}, (x,\mathbf{w}_{\bullet}) \mapsto x$ be the projection, which is projective, then $(\pi_{\mathbf{i},\mathbf{a}})_{!} \mathbf{1}_{\tilde{\mathbf{E}}_{\mathbf{i},\mathbf{a}}}$ is a semisimple complex of weight $0$. In particular, if $\mathbf{i}=(i)$ and $\mathbf{a}=(a)$, we denote  $(\pi_{\mathbf{i},\mathbf{a}})_{!} \mathbf{1}_{\tilde{\mathbf{E}}_{\mathbf{i},\mathbf{a}}}$ by $\mathbf{1}_{ai}$, whose underlying constructible sheaf is the skyscraper sheaf supported at $0 \in \mathbf{E}_{ai}$. We remark that the author of \cite{Bo15} uses the constant perverse sheaves on $\mathbf{E}_{ai}$ as generators, which differ from ours by a Fourier transform.

Assuming that $(\pi_{\mathbf{i},\mathbf{a}})_{!} \mathbf{1}_{\tilde{\mathbf{E}}_{\mathbf{i},\mathbf{a}}}=\bigoplus\limits_{j \in J} IC(\bar{Y}_{j},\mathbb{Q})[k_{j}], $ then we must have $IC(\bar{Y}_{j},\mathbb{Q})$ is pure of weight $-k_{j}$ for every $j \in J$, and we call such $IC(\bar{Y}_{j},\mathbb{Q})[k_{j}]$ a weight $0$ shifted simple direct summand of  $(\pi_{\mathbf{i},\mathbf{a}})_{!} \mathbf{1}_{\tilde{\mathbf{E}}_{\mathbf{i},\mathbf{a}}}$

Let $\mathcal{P}_{\alpha}=\mathcal{P}_{\alpha,Q}$ be the set of weight $0$ shifted simple direct summands of those  $(\pi_{\mathbf{i},\mathbf{a}})_{!} \mathbf{1}_{\tilde{\mathbf{E}}_{\mathbf{i},\mathbf{a}}}$ such that $   \underline{\rm{dim}}(\mathbf{i},\mathbf{a})=\alpha,$ and let $\mathcal{Q}_{\alpha}=\mathcal{Q}_{\alpha,Q}$ be the full subcategory of $\mathcal{D}_{\alpha}$ consisting of complexes of the form $$\bigoplus\limits_{L \in \mathcal{P}_{\alpha},k \in \mathbb{Z}}  L^{\oplus m_{L,k} }[k](\frac{k}{2})$$ such that $m_{L,k} \in \mathbb{N}$ and only finitely many $m_{L,k}$ are nonzero.

Let $\mathcal{K}_{\oplus}(\mathcal{Q}_{\alpha})$ be the split Grothendieck group of $\mathcal{Q}_{\alpha}$. More precisely, $\mathcal{K}_{\oplus}(\mathcal{Q}_{\alpha})$ is the $\mathbb{Z}[v^{\pm}]$-module spanned by $[L],L \in \mathcal{Q}_{\alpha}$ and subject to the relations
$$ [L_{1} \oplus L_{2}]=[L_{1}] +[L_{2}], $$
$$  [L[k](\frac{k}{2})  ]=v^{k} [L], k\in \mathbb{Z}. $$

By a similar argument as \cite[Theorem 2.29]{Bo15}, the direct sum $\mathcal{K}_{\oplus}(\mathcal{Q})=\bigoplus\limits_{\alpha \in \mathbb{N}Q_{0}} \mathcal{K}_{\oplus}(\mathcal{Q}_{\alpha})$ with $\star$ becomes an associative algebra, and is isomorphic to $\mathbf{U}^{\mathbb{Z}}_{v}(\mathfrak{n}^{+}_{Q})$ via the canonical isomorphism determined by $[\mathbf{1}_{ai}] \mapsto E^{(a)}_{i}$ if $i \in Q_{0}^{re}$ and $[\mathbf{1}_{ai}] \mapsto E_{i,a}$ if $i \in Q_{0}^{im}$. This endows $\mathcal{K}_{0}(\mathcal{D})= \bigoplus \limits_{\alpha \in \mathbb{N}Q_{0}} \mathcal{K}_{0}(\mathcal{D}_{\alpha})$ with a  structure of $\mathbb{Z}[v^{\pm}]$-algebra. 

\begin{remark}
	Since we choose different generators, when using \cite[Theorem 2.29]{Bo15}, we also need the fact that Fourier transforms commute with the convolution. A proof for constructible sheaves can be found in \cite[10.4.5]{achar2021perverse}, and this proof also holds for mixed Hodge modules.
\end{remark}

\subsubsection{The $\Psi$-twist}

Let $\mathcal{K}'_{\oplus}(\mathcal{Q}_{\alpha})$ be the $\mathbb{Z}[v^{\pm}]$-module spanned by $[L],L \in \mathcal{Q}_{\alpha}$ and subject to the relations
$$ [L_{1} \oplus L_{2}]=[L_{1}] +[L_{2}], $$
$$  [L[k](\frac{k}{2})  ]=(-v)^{k} [L], k\in \mathbb{Z}, $$
then $(\mathcal{K}'_{\oplus}(\mathcal{Q})=\bigoplus\limits_{\alpha \in \mathbb{N}Q_{0}} \mathcal{K}'_{\oplus}(\mathcal{Q}_{\alpha}),\star  )$ is isomorphic to $\mathbf{U}^{\mathbb{Z}}_{-v}(\mathfrak{n}^{+}_{Q}).$

Take $\Psi: \mathbb{N}Q_{0} \times \mathbb{N}Q_{0} \rightarrow \{\pm 1 \}$, $\Psi(a,b)=(-1)^{ \langle \langle a,b \rangle \rangle  } $ , then 
one can define a twisted product $\star^{\Psi}: \mathcal{K}'_{\oplus}(\mathcal{Q}_{\alpha}) \times \mathcal{K}'_{\oplus}(\mathcal{Q}_{\beta}) \rightarrow \mathcal{K}'_{\oplus}(\mathcal{Q}_{\gamma}) $ or $ \star^{\Psi}: \mathcal{K}_{0}(\mathcal{D}_{\alpha}) \times \mathcal{K}_{0}(\mathcal{D}_{\beta}) \rightarrow \mathcal{K}_{0}(\mathcal{D}_{\gamma})$ by setting
$$ [\mathcal{M}] \star^{\Psi} [\mathcal{N}] = \Psi(\alpha,\beta)  [\mathcal{M}] \star [\mathcal{N}].  $$ By \cite[Proposition 8.7]{HLCC}, the split Grothencieck group $\mathcal{K}'_{\oplus}(\mathcal{Q})=\bigoplus\limits_{\alpha \in \mathbb{N}Q_{0}}\mathcal{K}'_{\oplus}(\mathcal{Q}_{\alpha})$ with $\star^{\Psi}$  is isomorphic to $\mathbf{U}^{\mathbb{Z}}_{v}(\mathfrak{n}^{+}_{Q})$ via the canonical isomorphism determined by $[\mathbf{1}_{ai}] \mapsto E^{(a)}_{i}$ if $i \in Q_{0}^{re}$ and $[\mathbf{1}_{ai}] \mapsto E_{i,a}$ if $i \in Q_{0}^{im}$.

\subsection{Twisted K-theoretic Hall algebra of preprojective algebra}\label{KHA}

For a given quiver $Q=(Q_{0},Q_{1},s,t)$, its double quiver is defined by $\overline{Q}=(Q_{0},\overline{Q}_{1},s,t)$ such that $\overline{Q}_{1}=Q_{1} \cup Q^{\ast}_{1}, Q^{\ast}_{1}=\{ e^{\ast}| e\in Q_{1} \}$ and $s(e^{\ast})=t(e), t(e^{\ast})=s(e)  $ for any $e \in Q_{1}$.

For any dimension vector $\alpha \in \mathbb{N}Q_{0}$, let $\overline{\mathbf{E}}_{\alpha}$ be the moduli space of double quiver representations
$$ \overline{\mathbf{E}}_{\alpha}=\overline{\mathbf{E}}_{\alpha,Q}= \bigoplus\limits_{e \in Q_{1}} {\rm{Hom}} (\mathbb{C}^{\alpha_{s(e)}} ,\mathbb{C}^{\alpha_{t(e)}}  ) \oplus \bigoplus\limits_{e^{\ast} \in Q^{\ast}_{1}} {\rm{Hom}} (\mathbb{C}^{\alpha_{s(e^{\ast})}} ,\mathbb{C}^{\alpha_{t(e^{\ast})}}  ),  $$
which can be identified with the cotangent bundle of $\mathbf{E}_{\alpha}$.  We write an element of $\overline{\mathbf{E}}_{\alpha}$ as $(x,\bar{x})$, where $x=(x_{e})_{e\in Q_{1}} \in \mathbf{E}_{\alpha}$ and $\bar{x}=(\bar{x}_{e^{\ast}})_{e\in Q_{1}} \in \bigoplus\limits_{e^{\ast} \in {Q}^{\ast}_{1}} {\rm{Hom}} (\mathbb{C}^{\alpha_{s(e^{\ast})}} ,\mathbb{C}^{\alpha_{t(e^{\ast})}}  ). $ 

The algebraic group $\mathbf{G}_{\alpha}$ acts on $\overline{\mathbf{E}}_{\alpha}$ by composition $g \cdot x_{e} =  g_{t(e)} x_{e} g^{-1}_{s(e)}$ and
$ g \cdot \bar{x}_{e^{\ast}} =  g_{s(e)} \bar{x}_{e^{\ast}} g^{-1}_{t(e)}.  $  Then a $\mathbf{G}_{\alpha}$ orbit of $\overline{\mathbf{E}}_{\alpha}$ corresponds to an isomorphic class of representations of the double quiver.

The multiplicative group $\mathbb{C}^{\times}$ acts on $\overline{\mathbf{E}}_{\alpha}$ by $t \cdot x_{e}=x_{e}$ and $t \cdot \bar{x}_{e^{\ast}}=t^{-1}\bar{x}_{e^{\ast}}$, which coincides with the $\mathbb{C}^{\times}$-action on cotangent bundles in Section \ref{grdef}. 

The moment map of $\overline{\mathbf{E}}_{\alpha}$ is given by
$$  \mu_{\alpha}: \overline{\mathbf{E}}_{\alpha} \longrightarrow Lie(\mathbf{G}_{\alpha})=\bigoplus\limits_{i \in Q_{0}} \mathfrak{gl}(\alpha_{i}),  (x,\bar{x}) \mapsto \sum\limits_{e \in Q_{1}} x_{e}\bar{x}_{e^{\ast}}-\bar{x}_{e^{\ast}}x_{e}, $$
then its zero fiber $\mu^{-1}_{\alpha}(0)$ is the space of representations of the preprojective algebra $\Pi_{Q}$ of $Q$.

Denote the quotient stack $\mu_{\alpha}^{-1}(0)/\mathbf{G}_{\alpha}$ by $\mathfrak{M}_{\alpha}=\mathfrak{M}_{\alpha,Q}$ and denote the equivariant K-theory $K_{\mathbb{C}^{\times}}(\mathfrak{M}_{\alpha} )= K_{\mathbf{G}_{\alpha} \times \mathbb{C}^{\times} }( \mu^{-1}_{\alpha}(0))$ by $\mathcal{A}_{\alpha}$, then the direct sum $\mathcal{A}=\bigoplus\limits_{\alpha \in \mathbb{N}Q_{0}} \mathcal{A}_{\alpha} $ has a structure of K-theoretic Hall algebra. To define the Hall product, we need to introduce the following diagram

\begin{center}
\[	\xymatrix{ 	
	&  \mathfrak{M}_{\alpha,\beta} \ar[dr]^{\tilde{q}} \ar[dl]_{\tilde{p}} &
	\\
	\mathfrak{M}_{\alpha} \times \mathfrak{M}_{\beta} 
	&  & \mathfrak{M}_{\gamma}.}
	\]
\end{center}
Here $\gamma=\alpha+\beta$, $ \mathfrak{M}_{\alpha}, \mathfrak{M}_{\beta}$ and $\mathfrak{M}_{\gamma}$ parameterize double quiver representations $\mathbf{V}_{\alpha},\mathbf{V}_{\beta}$ and $\mathbf{V}_{\gamma}$ of dimension vectors $\alpha,\beta,\gamma$ respectively, and the stack $\mathfrak{M}_{\alpha,\beta}$ parameterizes short exact sequences $0 \rightarrow \mathbf{V}_{\beta} \rightarrow \mathbf{V}_{\gamma} \rightarrow \mathbf{V}_{\alpha} \rightarrow 0 $ of double quiver representations. The map $\tilde{p}$ records the beginning and ending of the short exact sequence, while $\tilde{q}$ records the middle  of the short exact sequence.

For $i \in Q_{0}$, let $\mathcal{V}_{\alpha_{i}}$ be the tautological vector bundle on $\mathfrak{M}_{\alpha}$, which parametrizes the subspace $\mathbf{V}_{\alpha_{i}}$ at vertex $i$ of $\mathbf{V}_{\alpha}$. One can define $\mathcal{V}_{\beta_{i}}$ in a similar way. Then the K-theoretic Hall product is defined by 
$$\ast: \mathcal{A}_{\alpha} \otimes \mathcal{A}_{\beta}  \longrightarrow \mathcal{A}_{\gamma},$$
\begin{equation}\label{defKHA}
	\mathcal{M}_{1} \otimes \mathcal{M}_{2} \mapsto  \tilde{q}_{\ast} (sdet [ \sum\limits_{i \in Q_{0}} \frac{\mathcal{V}_{\beta_{i}}}{v \mathcal{V}_{\alpha_{i}}}- \sum\limits_{e \in Q_{1}} \frac{\mathcal{V}_{\beta_{t(e)}}}{v^{-1
		}\mathcal{V}_{\alpha_{s(e)}}}] \cdot \tilde{p}^{!} (\mathcal{M}_{1} \boxtimes \mathcal{M}_{2} )   ).
\end{equation}
Here the pullback $\tilde{p}^{!}$ is defined in \cite[2.1 and 4.1]{YZ}. For the notations of twist, one can see details in \cite[1.11]{ne25}, but here we use a different twist. The operator $\ast$ makes $(\mathcal{A},\ast)$ be an associative $\mathbb{Z}[v^{\pm}]=K_{\mathbb{C}^{\times}}(pt)$-algebra. Here $v$ is provided by the square root $\langle 1 \rangle$ considered in Section \ref{grt}.

The strongly seminilpotent variety $\Lambda_{\alpha}=\Lambda_{\alpha,Q} \subseteq \mu^{-1}_{\alpha}(0)$ is the closed subvariety, whose close points correspond to representations $M$ of preprojective algebras, which allow a filtration $0=M_{0} \subseteq M_{1}\subseteq \cdots \subseteq M_{r}= M$ such that $x_{e}M_{s} \subseteq M_{s-1} $ and $\bar{x}_{e^{\ast}}M_{s} \subseteq M_{s} $ for any $e \in Q_{1}$ and $1\leqslant s \leqslant r$. By \cite{BSV20}, it is a Lagrangian subvariety of $\overline{\mathbf{E}}_{\alpha}$. We denote the quotient stack $\Lambda_{\alpha}/\mathbf{G}_{\alpha}$ by $\mathfrak{N}_{\alpha}=\mathfrak{N}_{\alpha,Q}$ and denote the equivariant K-theory $K_{\mathbb{C}^{\times}}(\mathfrak{N}_{\alpha})= K_{\mathbb{C}^{\times}\times \mathbf{G}_{\alpha}}(\Lambda_{\alpha})  $ by $\mathcal{A}^{nil}_{\alpha}$. Then the restriction of $\ast$ to $\mathcal{A}^{nil}=\bigoplus\limits_{\alpha \in \mathbb{N}Q_{0}}\mathcal{A}^{nil}_\alpha$ makes $\mathcal{A}^{nil}$ be an associative algebra, and we still denote the restriction of the Hall product by $\ast$.

\subsection{Compare different convolutions}
Consider the full subcategory $\mathcal{D}^{b}_{\mathbf{G}_{\alpha}}({\rm{MHM}} (\mathbf{E}_{\alpha},\Lambda_{\alpha},\mathbf{k} ))'$ of $\mathcal{D}_{\alpha}$ and denote it by $\mathcal{D}^{nil}_{\alpha}$, then there is a morphism of $\mathbb{Z}[v^{\pm}]$-modules
$ [gr]: \mathcal{K}_{0}(\mathcal{D}^{nil}_{\alpha}) \longrightarrow \mathcal{A}^{nil}_{\alpha}.$ 

\begin{proposition}
	Let $\mathcal{K}_{0}(\mathcal{D}^{nil})= \bigoplus \limits_{\alpha \in \mathbb{N}Q_{0}}  \mathcal{K}_{0}(\mathcal{D}^{nil}_{\alpha})$,
	 and then the map $$[gr]:  (\mathcal{K}_{0}(\mathcal{D}^{nil}),\star^{\Psi}) \longrightarrow (\mathcal{A}^{nil},\ast)$$ is a morphism of $\mathbb{Z}[v^{\pm}]$-algebras. 
\end{proposition}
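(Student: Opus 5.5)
We track the convolution $\mathcal{M}\star\mathcal{N}$ step by step through $gr$, using the functorial statements of Section 2, and compare the result with the formula (\ref{defKHA}) for $\ast$ written in its standard form. For preprojective algebras the map $\tilde{p}^{!}$ is a refined pullback. It is realized by the correspondence
$$T^{\ast}(\mathbf{E}_{\alpha}\times\mathbf{E}_{\beta}) \leftarrow T^{\ast}_{\mathbf{F}}\text{-type space} \rightarrow T^{\ast}\mathbf{F}\rightarrow T^{\ast}(\mathbf{G}_{\gamma}\times^{\mathbf{P}}\mathbf{F}) \leftarrow \mathbf{G}_{\gamma}\times^{\mathbf{P}}\mathbf{F}\times_{\mathbf{E}_{\gamma}}T^{\ast}\mathbf{E}_{\gamma}\rightarrow T^{\ast}\mathbf{E}_{\gamma},$$
followed by restricting to moment-map zero fibers. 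This is the usual description of the KHA of $\Pi_Q$ as a Lagrangian correspondence, as in \cite{YZ}: a derived zero-locus pullback along the smooth part and a proper pushforward. So the plan is to show, for each of the four steps $\boxtimes$, $p^{\ast}{\rm Infl}$, ${\rm Ind}$ and $q_{\ast}$, that $gr$ intertwines it with the corresponding coherent operation, up to a line bundle and a shift.

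The steps are as follows.
\begin{enumerate}
\item $gr(\mathcal{M}\boxtimes\mathcal{N})\cong gr\mathcal{M}\boxtimes gr\mathcal{N}$, which is immediate from the definition of the filtration on an external product.
\item $p$ is a smooth (affine bundle) map, so Proposition \ref{epb} together with Proposition \ref{eqo}, applied to the unipotent radical $U$ of $\mathbf{P}$, gives $gr(p^{\ast}{\rm Infl}(-))$. It equals $dp^{t}_{\ast}p_2^{\ast}(-)\otimes\omega_{\mathbf{F}/\mathbf{E}_\alpha\times\mathbf{E}_\beta}$, up to shift and $\langle\,\rangle$.
\item The induction proposition just proved handles ${\rm Ind}^{\mathbf{G}_\gamma}_{\mathbf{P}}$, producing another relative canonical bundle factor $\omega^{-1}$.
\item $q$ is proper, so Proposition \ref{epf} gives $gr(q_{\ast}-)\cong p_{2\ast}(dq^{t})^{\ast}gr(-)$.
\end{enumerate}
All character varieties remain in the $\Lambda$'s. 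The reason is that the nilpotent locus is preserved by extensions, which is exactly the condition $p_2(df^t)^{-1}(\Lambda)\subseteq\Lambda'$ needed in Propositions \ref{epb} and \ref{epf}.

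Composing these identifications, $gr(\mathcal{M}\star\mathcal{N})$ becomes a pull-push along
$$\mathfrak{N}_\alpha\times\mathfrak{N}_\beta\leftarrow(\text{nilpotent extension locus})\rightarrow\mathfrak{N}_\gamma.$$
The key geometric point is the identification of the intermediate space: $(dq^t)^{-1}\circ$(pushforward along $dp^t$), restricted to $\mu^{-1}(0)$, should be $\mathfrak{M}_{\alpha,\beta}$. The derived tensor/intersection along the non-transversal maps should then be exactly the virtual pullback $\tilde p^{!}$. I would verify this by writing $T^{\ast}\mathbf{F}$ inside $T^{\ast}\mathbf{E}_\gamma$ as the conormal to $\mathbf{F}$, and by checking that the Koszul complex of $\mu_{\mathbf{P}}$-equations restricted via ${\rm Ind}$ agrees with the one defining $\tilde p^{!}$ in \cite[2.1, 4.1]{YZ}.

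The main obstacle is the bookkeeping of twists. Several contributions enter:
\begin{itemize}
\item the canonical bundles $\omega_{X/Y}$ from Propositions \ref{epb} and \ref{eqo} and the induction proposition;
\item the shifts $[\langle\langle\alpha,\beta\rangle\rangle](\tfrac12\langle\langle\alpha,\beta\rangle\rangle)$;
\item the $\mathbb{C}^\times$-weights $\langle\pm 2d\rangle$;
\item the signs $(-1)^{\text{shift}}$ in $K$-theory.
\end{itemize}
All of these must combine to the superdeterminant factor $\mathrm{sdet}[\sum_i \mathcal{V}_{\beta_i}/v\mathcal{V}_{\alpha_i}-\sum_e \mathcal{V}_{\beta_{t(e)}}/v^{-1}\mathcal{V}_{\alpha_{s(e)}}]$ in (\ref{defKHA}). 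I would first compute the relative canonical bundles as determinants of tautological bundles: $\omega_{\mathbf{F}/\mathbf{E}_\alpha\times\mathbf{E}_\beta}$ is $\det$ of $\bigoplus_e{\rm Hom}(\mathcal{V}_{\alpha_{s(e)}},\mathcal{V}_{\beta_{t(e)}})^{\vee}$, and $\omega_{\mathbf{G}_\gamma/\mathbf{P}}$ is $\det$ of $\bigoplus_i{\rm Hom}(\mathcal{V}_{\alpha_i},\mathcal{V}_{\beta_i})^{\pm}$. The two sums in the sdet correspond precisely to these two pieces.

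The odd shifts $[\langle\langle\alpha,\beta\rangle\rangle]$ produce the sign $(-1)^{\langle\langle\alpha,\beta\rangle\rangle}$, and this sign is absorbed by $\Psi$. That is precisely why the statement uses $\star^{\Psi}$ rather than $\star$.
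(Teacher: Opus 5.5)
Your proposal follows essentially the same route as the paper. You push $gr$ through the steps of the convolution using Propositions \ref{epb} and \ref{epf}, the induction proposition and compatibility with inflation. You then identify the resulting pull--push with $\tilde{q}_{\ast}\tilde{p}^{!}$ over $\mathbf{G}_{\gamma}\times^{\mathbf{P}}\Lambda_{\alpha,\beta}$, and match the relative canonical bundles and shifts (with $d_{1}+d_{2}=\langle\langle\alpha,\beta\rangle\rangle$) against the sdet twist, with $\Psi$ cancelling the shift sign in $\star$.

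The differences are cosmetic, apart from one mis-citation:
\begin{itemize}
\item The paper first rewrites $\star$ as $q_{\ast}(p')^{\ast}{\rm Ind}^{\mathbf{G}_{\gamma}}_{\mathbf{P}}{\rm Infl}$, so that the smooth pullback is $\mathbf{G}_{\gamma}$-equivariant.
\item It takes the intermediate space $\mathbf{G}_{\gamma}\times^{\mathbf{P}}\overline{\mathbf{F}}$ (pairs $(x,\bar{x})$ both preserving $\mathbf{V}$) from \cite[Lemma 9.4]{HLCC}. It does not compute it from the conormal of $\mathbf{F}$, which would give the wrong space $\mathbf{F}\times\mathbf{F}^{\perp}$.
\item Inflation is handled by ${\rm For}\circ{\rm Infl}\cong{\rm For}$. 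Proposition \ref{eqo} does not apply there: it needs a free action, and $U$ acts freely on neither $\mathbf{E}_{\alpha}\times\mathbf{E}_{\beta}$ nor $\mathbf{F}$.
\end{itemize}
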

\begin{proof}
	Consider the following commutative diagram
	\[
	\xymatrix{ 
	\mathbf{G}_{\gamma}\times^{\mathbf{P}} (\mathbf{E}_{\alpha} \times \mathbf{E}_{\beta})	&  \mathbf{G}_{\gamma}\times^{\mathbf{P}} \mathbf{F} \ar[r]^{q} \ar[l]_-{p'} & \mathbf{E}_{\gamma}
		\\
		\mathbf{E}_{\alpha} \times \mathbf{E}_{\beta} \ar[u]^{i'}
		&  \mathbf{F} \ar[u]^{i}  \ar[l]_{p} & ,
	}
	\]
	 and then the $\star$ product can be reformulated as 
	$$ \mathcal{M} \star \mathcal{N}= q_{\ast} \circ   (p')^{\ast}{\rm{Ind}}^{\mathbf{G}_{\gamma}}_{\mathbf{P}}  \circ {\rm{Infl}}^{\mathbf{P}}_{\mathbf{G}_{\alpha} \times \mathbf{G}_{\beta}}  (  \mathcal{M} \boxtimes \mathcal{N}) [\langle \langle  \alpha ,\beta \rangle \rangle] (\frac{1}{2 } \langle \langle  \alpha ,\beta \rangle \rangle), $$
	for $\mathcal{M}$ and $\mathcal{N}$ in $\mathcal{D}_{\alpha}$ and $\mathcal{D}_{\beta}$ respectively.

	Denote $	\mathbf{G}_{\gamma}\times^{\mathbf{P}} (\mathbf{E}_{\alpha} \times \mathbf{E}_{\beta}) $, $ 	\mathbf{G}_{\gamma}\times^{\mathbf{P}} \mathbf{F}$ and $\mathbf{E}_{\gamma}$ by $X,X'$ and $Y$ respectively, then the cotangent correspondence of $p'$ and $q$ induces the following commutative diagram
	\[
	\xymatrix{
	T^{\ast}_{Y}(X\times X') \ar[r]^{\eta'} \ar[d]_{\kappa'} & T^{\ast}X'\times_{X'} Y \ar[r]^-{\pi_{X'}} \ar[d]^{dq^{t}} & T^{\ast}X' \\
	T^{\ast}X\times_{X} Y \ar[r]^{(dp')^{t}} \ar[d]_{\pi_{X}} & T^{\ast}Y & \\
T^{\ast}X & &
    }
	\]
	such that the square is Cartesian. There are three projections $pr_{1},pr_{2},pr_{3}: T^{\ast}_{Y}(X\times X') \rightarrow T^{\ast}X,T^{\ast}X',Y $ respectively, then $\kappa'=(pr_{1},pr_{3})$ and $\eta'=(-pr_{2},pr_{3})$, where $-pr_{2}$ is the composition of $pr_{2}$ and the automorphism of $T^{\ast}X$ acting by $-1$ on fibers .
	
	By \cite[Lemma 9.4]{HLCC}, the variety $T^{\ast}_{Y}(X\times X')$ is isomorphic to $\mathbf{G}_{\gamma}\times^{\mathbf{P}}\overline{\mathbf{F}}$. Here $\overline{\mathbf{F}}$ is the closed subvariety of $\overline{\mathbf{E}}_{\gamma}$ consisting of elements $(x,\bar{x})$ preserving the fixed subspace $\mathbf{V}$. Hence the cotangent correspondences of $i',p'$ and $q$ fit into the following commutative diagram
	\[
	\xymatrix{
\overline{\mathbf{E}}_{\alpha} \times \overline{\mathbf{E}}_{\beta} \ar[r]^-{\iota}	& T^{\ast}X & \mathbf{G}_{\gamma}\times^{\mathbf{P}} \overline{\mathbf{F}} \ar[l]_-{\kappa} \ar[r]^-{\eta}  &  T^{\ast}X' \\
\Lambda_{\alpha} \times \Lambda_{\beta} \ar[u] \ar[r]^-{\iota}	& \mathbf{G}_{\gamma}\times^{\mathbf{P}}(\Lambda_{\alpha} \times \Lambda_{\beta})  \ar[u] & \mathbf{G}_{\gamma}\times^{\mathbf{P}} \Lambda_{\alpha,\beta} \ar[u] \ar[r]^-{\eta} \ar[l]_-{\kappa} & \Lambda_{\gamma}, \ar[u]
	}
	\]
	here $\Lambda_{\alpha,\beta}$ is $\Lambda_{\gamma} \cap \overline{\mathbf{F}}$, all vertical arrows are inclusion, $\iota$ is the inclusion defining induction equivalence, $\kappa$ and $\eta$ are (restrictions of) the compositions of vertical and horizontal maps in the diagram of the cotangent correspondence, and the right square is Cartesian. Then $\iota$,$\kappa$ and $-\eta$ induce morphisms $\tilde{p}$ and $\tilde{q}$ between stacks, where $-\eta$ is the composition of $(pr_{2},pr_{3})$ and $\pi_{X}'$.  (The fact $\eta\kappa^{-1}( \mathbf{G}_{\gamma}\times^{\mathbf{P}}(\Lambda_{\alpha} \times \Lambda_{\beta}) ) \subseteq \Lambda_{\gamma} $ also implies that $\mathcal{K}_{0}(\mathcal{D}^{nil})$ is a subalgebra of $\mathcal{K}_{0}(\mathcal{D})$.)
	
	Thus we have the following equation
	\begin{equation*}
		\begin{split}
			&[gr]([\mathcal{M}]\star^{\Psi} [\mathcal{N}] )\\
			\cong& (-1)^{\langle \langle \alpha,\beta \rangle \rangle}  [gr(\mathcal{M} \star \mathcal{N})  ] \\
			\cong&(-1)^{\langle \langle \alpha,\beta \rangle \rangle} [gr(   q_{\ast} \circ   (p')^{\ast}{\rm{Ind}}^{\mathbf{G}_{\gamma}}_{\mathbf{P}}  \circ {\rm{Infl}}^{\mathbf{P}}_{\mathbf{G}_{\alpha} \times \mathbf{G}_{\beta}}  (  \mathcal{M} \boxtimes \mathcal{N}) [\langle \langle  \alpha ,\beta \rangle \rangle] (\frac{1}{2 } \langle \langle  \alpha ,\beta \rangle \rangle) ) ] \\
			\cong & v^{  \langle \langle \alpha,\beta \rangle \rangle}[gr(   q_{\ast} \circ   (p')^{\ast}{\rm{Ind}}^{\mathbf{G}_{\gamma}}_{\mathbf{P}}  \circ {\rm{Infl}}^{\mathbf{P}}_{\mathbf{G}_{\alpha} \times \mathbf{G}_{\beta}}  (  \mathcal{M} \boxtimes \mathcal{N}))] \\
			\cong &   v^{  \langle \langle \alpha,\beta \rangle \rangle}[  (\pi_{X'})_{\ast}(dq^t)^{\ast}  gr(   (p')^{\ast}{\rm{Ind}}^{\mathbf{G}_{\gamma}}_{\mathbf{P}}  \circ {\rm{Infl}}^{\mathbf{P}}_{\mathbf{G}_{\alpha} \times \mathbf{G}_{\beta}}  (  \mathcal{M} \boxtimes \mathcal{N}))]\\
			\cong &   (-1)^{d_{1}}v^{  \langle \langle \alpha,\beta \rangle \rangle-2d_{1} }[  (\pi_{X'})_{\ast}(dq^t)^{\ast}((dp')^{t})^{\ast}(\pi_{X})_{\ast} \\
			&  gr(   {\rm{Ind}}^{\mathbf{G}_{\gamma}}_{\mathbf{P}}  \circ {\rm{Infl}}^{\mathbf{P}}_{\mathbf{G}_{\alpha} \times \mathbf{G}_{\beta}}  (  \mathcal{M} \boxtimes \mathcal{N}) \otimes p^{\ast}_{1}\omega_{Y/X}) ] \\
			\cong &   (-1)^{d_{1}+d_{2}}v^{  \langle \langle \alpha,\beta \rangle \rangle-2d_{1}-2d_{2} }[  (\pi_{X'})_{\ast}(dq^t)^{\ast}((dp')^{t})^{\ast}(\pi_{X})_{\ast} {\rm{Ind}}^{\mathbf{G}_{\gamma}}_{\mathbf{P}} \\
			& gr(  {\rm{Infl}}^{\mathbf{P}}_{\mathbf{G}_{\alpha} \times \mathbf{G}_{\beta}}  (  \mathcal{M} \boxtimes \mathcal{N}) \otimes p^{\ast}_{2} \omega^{-1}_{\mathbf{E}_{\alpha} \times \mathbf{E}_{\beta}/X}  \otimes ({\rm{Ind}}^{\mathbf{G}_{\gamma}}_{\mathbf{P}})^{-1}  p^{\ast}_{1}\omega_{Y/X}) ] \\
			\cong & (-1)^{d_{1}+d_{2}}v^{  \langle \langle \alpha,\beta \rangle \rangle-2d_{1}-2d_{2} }[  (\eta)_{\ast}(\kappa)^{\ast} {\rm{Ind}}^{\mathbf{G}_{\gamma}}_{\mathbf{P}} {\rm{Infl}}^{\mathbf{P}}_{\mathbf{G}_{\alpha} \times \mathbf{G}_{\beta}}  \\
			& gr(  (  \mathcal{M} \boxtimes \mathcal{N}) \otimes p^{\ast}_{2}  ({\rm{Infl}}^{\mathbf{P}}_{\mathbf{G}_{\alpha} \times \mathbf{G}_{\beta}} )^{-1} \omega^{-1}_{\mathbf{E}_{\alpha} \times \mathbf{E}_{\beta}/X}  \otimes({\rm{Infl}}^{\mathbf{P}}_{\mathbf{G}_{\alpha} \times \mathbf{G}_{\beta}}  )^{-1} ({\rm{Ind}}^{\mathbf{G}_{\gamma}}_{\mathbf{P}})^{-1}  p^{\ast}_{1}\omega_{Y/X}) ] .
		\end{split}
	\end{equation*}
	Here $d_{1}$ is the relative dimension of $p'$, $d_{2}$ is the dimension of $\mathbf{G}_{\gamma}/\mathbf{P}$, $p_{1}$ is (the restriction of) $Y \times X T^{\ast}X  \rightarrow Y$ and $p_{2}$ is (the restriction of) $\overline{\mathbf{E}}_{\alpha} \times \overline{\mathbf{E}}_{\beta} \rightarrow \mathbf{E}_{\alpha} \times \mathbf{E}_{\beta}. $
	
	Noticing that the composition $(-\eta)_{\ast}(\kappa)^{\ast} {\rm{Ind}}^{\mathbf{G}_{\gamma}}_{\mathbf{P}} {\rm{Infl}}^{\mathbf{P}}_{\mathbf{G}_{\alpha} \times \mathbf{G}_{\beta}}$ defines $\tilde{q}_{\ast}\tilde{p}^{!}$ in the sense of \cite{YZ}, it remains to calculate the relative canonical sheaves of $i'$ and $p'$. (Here we also use the fact that  the actions of $(-\eta)_{\ast}$ and $\eta_{\ast}$ coincide on the K groups.)
	
	For $i \in Q_{0}$, we still denote the tautological vector bundle  on $\mathbf{E}_{\alpha}$  which associates with the vector space at $i$ by $\mathcal{V}_{\alpha_{i}}$, as what we have done for the preprojective representations. Since $p'$ is a trivial vector bundle whose fiber is  isomorphic to $\bigoplus\limits_{e \in Q_{1}} {\rm{Hom}} (\mathbb{C}^{\alpha_{s(e)}} ,\mathbb{C}^{\beta_{t(e)}} )$, $\omega_{Y/X}$ equals to $det(-\sum\limits_{e \in Q_{1}} \frac{\mathcal{V}_{\beta_{{t(e)}}}}{\mathcal{V}_{\alpha_{s(e)}}})$ in K-theory, and $({\rm{Infl}}^{\mathbf{P}}_{\mathbf{G}_{\alpha} \times \mathbf{G}_{\beta}}  )^{-1} ({\rm{Ind}}^{\mathbf{G}_{\gamma}}_{\mathbf{P}})^{-1}  p^{\ast}_{1}\omega_{Y/X}$ is the same determinant of the tautological vector bundle on $\Lambda_{\alpha} \times \Lambda_{\beta}$. 
	
	Since $\omega_{\mathbf{E}_{\alpha} \times \mathbf{E}_{\beta}/X} $ is determined by the adjoint representation $\mathfrak{g}_{\gamma}/\mathfrak{p}$ of $\mathbf{P}$, where $\mathfrak{g}_{\gamma}$ and $\mathfrak{p}$ are Lie algebras of $\mathbf{G}_{\gamma}$ and $\mathbf{P}$, a direct calculation shows that $\omega^{-1}_{\mathbf{E}_{\alpha} \times \mathbf{E}_{\beta}/X} =det (\sum\limits_{i \in Q_{0}}  \frac{\mathcal{V}_{\beta_{i}}}{\mathcal{V}_{\alpha_{i}}} ) $,  and $p^{\ast}_{2}  ({\rm{Infl}}^{\mathbf{P}}_{\mathbf{G}_{\alpha} \times \mathbf{G}_{\beta}} )^{-1}$ $\omega^{-1}_{\mathbf{E}_{\alpha} \times \mathbf{E}_{\beta}/X} $ is the same line bundle on  $\Lambda_{\alpha} \times \Lambda_{\beta}$.
	
	Noticing that $d_{1}+d_{2}=\langle \langle \alpha,\beta \rangle \rangle$, a direct calculation shows that the signs and $v$ powers in the twists coincide, the proof is finished. 
\end{proof}

By a similar argument as \cite[Section 13]{Lu91} and \cite{BT16}, one can prove that singular support of an object in $\mathcal{Q}_{\alpha}$ is contained in $\Lambda_{\alpha}$.  Noticing that there is a natural $\mathbb{Z}[^{\pm}]$-linear morphism $\mathcal{K}'_{\oplus}(\mathcal{Q}_{\alpha}) \rightarrow \mathcal{K}_{0}(\mathcal{D}_{\alpha}) $ of algebras, we still denote the composition  $\mathcal{K}'_{\oplus}(\mathcal{Q}_{\alpha}) \rightarrow \mathcal{K}_{0}(\mathcal{D}_{\alpha})\xrightarrow{[gr]}\mathcal{A}^{nil}_{\alpha}  $ by $[gr]$. 

\begin{proposition}\label{inj}
	The map $[gr]: (\mathcal{K}'_{\oplus}(\mathcal{Q}),\star^{\Psi}) \longrightarrow (\mathcal{A}^{nil},\ast)  $ is an injective homomorphism of algebras.
\end{proposition}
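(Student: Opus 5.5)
The homomorphism property is essentially already in hand. By the preceding proposition, $[gr]:(\mathcal{K}_{0}(\mathcal{D}^{nil}),\star^{\Psi})\to(\mathcal{A}^{nil},\ast)$ is a morphism of $\mathbb{Z}[v^{\pm}]$-algebras. The singular supports of objects of $\mathcal{Q}_{\alpha}$ lie in $\Lambda_{\alpha}$, so the natural map $\mathcal{K}'_{\oplus}(\mathcal{Q}_{\alpha})\to\mathcal{K}_{0}(\mathcal{D}_{\alpha})$ factors through $\mathcal{K}_{0}(\mathcal{D}^{nil}_{\alpha})$. This map is compatible with $\star^{\Psi}$, since $\mathcal{Q}$ is closed under convolution up to the shifts and twists used to define it. The sign convention $[L[k](\frac{k}{2})]=(-v)^{k}[L]$ also matches $K$-theory: $gr$ of a shift by $[k]$ contributes $(-1)^{k}$ and $(\frac{k}{2})$ contributes $v^{k}$. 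So the real content of the statement is injectivity.

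For injectivity I would not work with the $K$-groups directly. Instead I would pass to the classical limit and compare with the characteristic cycle map of \cite{HLCC}. The plan has three steps.

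First, fix a $\mathbb{Z}[v^{\pm}]$-basis of $\mathcal{K}'_{\oplus}(\mathcal{Q}_{\alpha})$ given by the classes $[L]$, $L\in\mathcal{P}_{\alpha}$ up to shifts. Since $\mathcal{K}'_{\oplus}(\mathcal{Q}_{\alpha})$ is free over $\mathbb{Z}[v^{\pm}]$, it suffices to show that the images $[gr L]$ are linearly independent over $\mathbb{Z}[v^{\pm}]$. For this I would use the zero-degree Chern character, that is the support cycle $Ch_{0}:K_{\mathbf{G}_{\alpha}\times\mathbb{C}^{\times}}(\Lambda_{\alpha})\to \mathrm{H}^{BM}_{top}(\Lambda_{\alpha},\mathbb{Z})$. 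This map sends a coherent sheaf to its cycle of multiplicities on the irreducible components of the Lagrangian $\Lambda_{\alpha}$, and $\mathrm{H}^{BM}_{top}(\Lambda_{\alpha},\mathbb{Z})$ is free on those components.

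Second, establish the commutative square of Theorem \ref{main2}: for a simple Hodge module, the cycle of $gr^{F}$ is the characteristic cycle of $rat$. This is the standard fact that the characteristic variety with multiplicities can be computed from any good filtration, in particular the Hodge filtration. Hence $Ch_{0}\circ[gr]|_{v=1}=CC\circ[rat]$.

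Third, invoke \cite{HLCC}: the map $CC:\mathcal{K}'_{\oplus}(rat(\mathcal{Q}))|_{v=1}\to\mathcal{H}^{0}$ is injective, indeed an isomorphism onto $\mathbf{U}^{\mathbb{Z}}(\mathfrak{n}^{+}_{Q})$. Equivalently, the characteristic cycles of the simple perverse sheaves in $\mathcal{P}_{\alpha}$ are linearly independent. This is the unitriangularity of $CC(IC(\bar Y_{j}))$ with respect to the conormal components $\overline{T^{\ast}_{Y_{j}}}$, and the number of simples equals the number of components of $\Lambda_{\alpha}$ by Lusztig, Bozec and Kashiwara–Saito.

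The main obstacle is lifting linear independence from $v=1$ to $\mathbb{Z}[v^{\pm}]$. Suppose $\sum_{L}c_{L}(v)[gr L]=0$ with not all $c_{L}$ zero. Divide by the largest power of $(v-1)$ dividing all the $c_{L}$; then some $c_{L}(1)\neq0$. This is legitimate only if $\mathcal{A}^{nil}_{\alpha}$ has no $(v-1)$-torsion, which is not automatic.

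To avoid this issue, I would instead use the grading. Each $gr L$ is a $\mathbb{C}^{\times}$-equivariant sheaf, and its class restricted to the generic point of a component $C$ of $\Lambda_{\alpha}$ has a graded multiplicity in $\mathbb{Z}[v^{\pm}]$, whose value at $v=1$ is the classical multiplicity. By unitriangularity, $gr L_{j}$ has support meeting $\overline{T^{\ast}_{Y_{j}}}$ generically with nonzero graded rank, and it vanishes generically on the components of higher strata. The localization map $\mathcal{A}^{nil}_{\alpha}\to\bigoplus_{C}K_{\mathbb{C}^{\times}}(\text{generic point of }C)$, combined with this unitriangularity, then gives injectivity over $\mathbb{Z}[v^{\pm}]$ directly. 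This triangularity argument is the step I would spend most care on.
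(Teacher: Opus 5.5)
Your argument up to the classical limit follows the paper: show $Ch_{0}\circ[gr]=CC\circ[rat]$ at $v=1$, then use \cite[Theorem 9.11]{HLCC}. Where you differ is in how the square is established. You prove it object by object: the cycle of $gr^{F}$ is the characteristic cycle, for any good filtration. The paper instead shows that all four maps are algebra homomorphisms, which for $Ch_{0}$ uses \cite{compar} and \cite{Dav17}, and then compares the generators $[\mathbf{1}_{ai}]$, both of which give $[T^{\ast}_{0}\mathbf{E}_{ai}]$. Your route is more direct and does not need multiplicativity of $Ch_{0}$.

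You are also right that an isomorphism at $v=1$ does not by itself give injectivity over $\mathbb{Z}[v^{\pm}]$; think of $\mathbb{Z}[v^{\pm}]\to\mathbb{Z}[v^{\pm}]/(v-1)$. The paper draws this conclusion in one sentence and does not discuss torsion.

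The repair you propose does not work, for two reasons.

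First, an equivariant class has no $\mathbb{Z}[v^{\pm}]$-valued graded multiplicity at the generic point of a component whose generic stabiliser in $\mathbf{G}_{\alpha}\times\mathbb{C}^{\times}$ does not surject onto $\mathbb{C}^{\times}$: there the grading is absorbed by the orbits. Take the Jordan quiver with $\alpha=1$. Then $\Lambda_{1}=T^{\ast}_{0}\mathbf{E}_{1}\cong\mathbb{C}$, $\mathbf{G}_{1}$ acts trivially, and $\mathbb{C}^{\times}$ acts freely on $\mathbb{C}\setminus\{0\}$. Restricting to the generic locus gives
$\mathcal{A}^{nil}_{1}\cong\mathbb{Z}[z^{\pm},v^{\pm}]\to K_{\mathbf{G}_{1}\times\mathbb{C}^{\times}}(\mathbb{C}\setminus\{0\})\cong\mathbb{Z}[z^{\pm}][v]/(v^{2}-1)$, where $z$ is the character of $\mathbf{G}_{1}$. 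This map kills $(v^{2}-1)[\mathcal{O}_{\Lambda_{1}}]\neq 0$, which lies in the image of $[gr]$. So generic localization has the same defect as $Ch_{0}$.

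Second, triangularity by supports fails. Take the Kronecker quiver in dimension vector $(2,2)$, with $\mathbf{i}$ alternating sink, source, sink, source and all $a_{k}=1$. The map $\pi_{\mathbf{i},\mathbf{a}}$ is generically two-to-one, with the two preimages (the two eigenlines of the pencil) exchanged by monodromy. Hence $\mathcal{P}_{(2,2)}$ contains both $IC(\mathbf{E}_{(2,2)},\mathbb{Q})$ and the IC extension of the sign local system. These have the same support and the same leading component $T^{\ast}_{\mathbf{E}_{(2,2)}}\mathbf{E}_{(2,2)}$. Linear independence of characteristic cycles is the global content of \cite{HLCC}; it is not a triangularity.

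What is actually missing is a torsion statement, for example that $v-1$ acts injectively on $\mathcal{A}^{nil}_{\alpha}$, or on the image of $[gr]$. With that input, your argument of dividing by the largest power of $(v-1)$ finishes the proof. Restricting to generic points of components cannot provide it.
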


In order to show the injectivity, we need to introduce the (zero degree) cohomological Hall algebra.

Let $\mathcal{H}_{\alpha}={\rm{H}}^{\ast}(\mathfrak{N}_{\alpha},\mathbf{k})={\rm{H}}^{\ast,\mathbf{G}_{\alpha}}(\Lambda_{\alpha},\mathbf{k}) $ be the cohomology groups of $\mathfrak{N}_{\alpha}$, and let $\widehat{\mathcal{H}}_{\alpha}$ be its formal completion $\prod_{m \in \mathbb{Z}} {\rm{H}}^{m} (\mathfrak{N}_{\alpha},\mathbf{k}) $ and $\mathcal{H}^{0}_{\alpha}={\rm{H}}^{0}(\mathfrak{N}_{\alpha},\mathbf{k})  $ be its zero degree, then the diagram in \ref{KHA} also induces a product $ \odot$ on $\widehat{\mathcal{H}}=\bigoplus\limits_{\alpha \in \mathbb{N}Q_{0}}\widehat{\mathcal{H}}_{\alpha}$,

$$ \odot: \widehat{\mathcal{H}}_{\alpha} \times \widehat{\mathcal{H}}_{\beta} \rightarrow \widehat{\mathcal{H}}_{\gamma},   c_{1} \odot c_{2} =  \tilde{q}_{\ast}\tilde{p}^{!}(c_{1} \boxtimes c_{2}).  $$
The $\odot$ restricts to  products on $\mathcal{H}=\bigoplus \limits_{\alpha \in \mathbb{N}Q_{0}}$ and $\mathcal{H}^{0}=\bigoplus\limits_{\alpha \in \mathbb{N}Q_{0}} \mathcal{H}^{0}_{\alpha}$ respectively, and we can also define a $\Psi$-twisted version product $\odot^{\Psi}$ as what we have done for $\star^{\Psi}$.

\begin{proof}
	Let $\mathcal{Q}_{\alpha,c}$ be the full subcategory of the equivariant derived category $\mathcal{D}^{b}_{\mathbf{G}_{\alpha},c}(\mathbf{E}_{\alpha},\mathbf{k})$ of constructible sheaves consisting of $rat(\mathcal{M})$ with $\mathcal{M}$ in $\mathcal{Q}_{\alpha}$ and let $\mathcal{K}'_{\oplus} ( \mathcal{Q}_{\alpha,c} )$ be the split Grothendieck group of $\mathcal{Q}_{\alpha,c}$ such that $[L[1]]=-v[L]$, then $\star^{\Psi}$ induces a product on the split Grothendieck group $\mathcal{K}'_{\oplus}(\mathcal{Q}_{c})=\bigoplus\limits_{\alpha \in \mathbb{N}Q_{0}}\mathcal{K}'_{\oplus}(\mathcal{Q}_{\alpha,c})$ and $rat$ induces a $\mathbb{Z}[v^{\pm}]$ linear map $[rat]:\mathcal{K}'_{\oplus}(\mathcal{Q}_{\alpha}) \rightarrow \mathcal{K}'_{\oplus}(\mathcal{Q}_{\alpha},c)$. Following \cite{HLCC}, there is a natural characteristic cycle map $$CC:   \mathcal{K}'_{\oplus}(\mathcal{Q}_{\alpha},c)|_{v=1} \rightarrow  {\rm{H}}^{BM}_{top} (\mathfrak{N}_{\alpha},\mathbf{k}).$$
	Let $Ch_{0}: \mathbb{Q}\otimes_{\mathbb{Z}}\mathcal{A}^{nil}_{\alpha} \rightarrow \mathcal{H}^{0}_{\alpha}  $ be the composition of ${\rm{For}}^{\mathbb{C}^{\times}}$ and zero degree Chern character,
    then we claim that we have the following commutative diagram of $\mathbb{Q}$-algebras
    \[
    \xymatrix{
   ( \mathbb{Q}\otimes_{\mathbb{Z}}\mathcal{K}'_{\oplus}(\mathcal{Q})|_{v=1},\star^{\Psi}) \ar[r]^{[gr]} \ar[d]_{[rat]} &  ( \mathbb{Q}\otimes_{\mathbb{Z}}\mathcal{A}^{nil}|_{v=1}, \ast ) \ar[d]^{Ch_{0}} \\
   ( \mathbb{Q}\otimes_{\mathbb{Z}}\mathcal{K}'_{\oplus}(\mathcal{Q}_{c})|_{v=1},\star^{\Psi}) \ar[r]^{CC}  & (\mathcal{H}^{0} ,\odot^{\Psi} ).
    }    
    \]
    Replace $[T^{\ast}_{\mathbf{E}_{\alpha}} \mathbf{E}_{\alpha}]$  of \cite[Theorem 9.11]{HLCC} by  $[T^{\ast}_{0} \mathbf{E}_{\alpha}]$, one can show that $CC$ is an isomorphism of algebras in a similar way.
    
    Following \cite[Theorem 4.8]{compar}, there is a morphism of $\mathbb{Q}$ algebras from $ \mathbb{Q}\otimes_{\mathbb{Z}}\mathcal{A}^{nil}|_{v=1}$ to twisted $\widehat{\mathcal{H}}$ defined by
    $$  \mathbb{Q}\otimes_{\mathbb{Z}}\mathcal{A}^{nil}_{\alpha} \rightarrow \widehat{\mathcal{H}}_{\alpha},  f \mapsto  Td^{1/2} _{\mathfrak{M}_{\alpha}  } Ch(f), $$  
    where $Ch$ is the Chern character and $Td^{1/2} _{\mathfrak{M}_{\alpha}  }$ is a square root of the Todd class of $\mathfrak{M}_{\alpha}$. The twist product of $\widehat{\mathcal{H}}$ is defined by $$\widehat{\mathcal{H}}_{\alpha} \times \widehat{\mathcal{H}}_{\beta} \rightarrow \widehat{\mathcal{H}}_{\gamma}, (c_{1},c_{2} ) \mapsto t_{\alpha,\beta}c^{\alpha}_{\beta}\tilde{q}_{*} \tilde{p}^{!}(c^{\beta}_{\alpha})^{-1} (c_{1}\boxtimes c_{2}),  $$
    where $t_{\alpha,\beta}$ is the Chern character of the twisting factor in equation \ref{defKHA} and $c^{\beta}_{\alpha}, c^{\alpha}_{\beta}$ are certain classes. For the definition of  $c^{\beta}_{\alpha}, c^{\alpha}_{\beta}$, one can see details in \cite[Section 3.4 and 4.3]{compar}. Since the statement of \cite{compar} holds for quivers with potentials, we use the fact \cite[Appendix]{Dav17} that the CoHA of preprojective algebras is isomorphic to the CoHA of  triple quiver with a certain potential.
    
    After restricting $f \mapsto  Td^{1/2} _{\mathfrak{M}_{\alpha}  } Ch(f)$
    to zero degree, we obtain the morphism $Ch_{0}$ since the Todd class contributes $1$ at zero degree. Noticing that $t_{\alpha,\beta}$ coincides with the $\Psi$-twist, we can see that $Ch_{0}$ is a morphism of algebras.
    
    In order to show that the diagram is commutative, it suffices to check for any $a \geqslant 1$ and $i \in Q_{0}$,
    $$CC([rat] [\mathbf{1}_{ai}] ) = Ch_{0} ([gr] [\mathbf{1}_{ai}] ).$$ By direct calculation,  $CC([rat][\mathbf{1}_{ai}])=[T^{\ast}_{0} \mathbf{E}_{ai} ]$. Here we identify ${\rm{H}}^{BM}_{top,\mathbf{G}_{ai}}(\Lambda_{ai},\mathbf{k})$ with $\mathcal{H}^{0}_{ai}$ via Poincare duality and $[T^{\ast}_{0} \mathbf{E}_{ai} ]$ means the fundamental class of the irreducible component $T^{\ast}_{0} \mathbf{E}_{ai}$ of $\Lambda_{ai}$. Similarly, $Ch_{0} ([gr] [\mathbf{1}_{ai}]) =Ch_{0} (\pi^{\ast}_{\mathbf{E}_{ai}} \mathcal{O}_{0} ), $ where $\pi_{\mathbf{E}_{ai}}:\overline{\mathbf{E}}_{ai} \rightarrow \mathbf{E}_{ai}$ is the projection and $\mathcal{O}_{0}$ is the skyscraper sheaf at $0 \in \mathbf{E}_{ai}$. Hence $Ch_{0}([gr] [\mathbf{1}_{ai}])= Ch_{0}( \mathcal{O}_{\pi^{-1}_{\mathbf{E}_{ai}}}(0) )= [T^{\ast}_{0} \mathbf{E}_{ai} ]$ via Poincare duality.
    
    By \cite[Theorem 9.11]{HLCC}, we can see that $Ch_{0}[gr]$ is an isomorphism of $\mathbb{Z}$-algebra at $v=1$, hence $[gr]$ is injective.

    Let $\mathcal{A}^{nil}_{zs}$ be the subalgebra of $\mathcal{A}^{nil}$ generated by elements  $[\mathcal{O}_{T^{\ast}_{0} \mathbf{E}_{ai}}], a\in \mathbb{N}, i \in Q_{0}$ and we call $\mathcal{A}^{nil}_{ss}$ the zero spherical subalgebra of the K-theoretic Hall algebra. Then we have the following corollary.
    \begin{corollary}\label{ciso}
    	There is a commutative diagram of isomorphisms of $\mathbb{Z}[v^{\pm}]$-algebras
    	\[\xymatrix{
    	& \mathbf{U}^{\mathbb{Z}}_{v}(\mathfrak{n}^{+}_{Q})  \ar[ld]_{\phi} \ar[dr]^{\psi} & \\
       (\mathcal{K}'_{\oplus}(\mathcal{Q}),\star^{\Psi}) \ar[rr]^{[gr]} 	&  & (\mathcal{A}^{nil}_{zs},\ast),
    	}\]
    	where $\phi$ is the unique morphism determined by $E^{(a)}_{i}\mapsto [\mathbf{1}_{ai}]$ if $i \in Q^{re}_{0}$ and $E_{i,a} \mapsto [\mathbf{1}_{ai}]$ if $i \in Q^{im}_{0}$, and $\psi$ is the unique morphism determined by $E^{(a)}_{i}\mapsto [\mathcal{O}_{T^{\ast}_{0} \mathbf{E}_{ai}}]$ if $i \in Q^{re}_{0}$ and $E_{i,a} \mapsto [\mathcal{O}_{T^{\ast}_{0} \mathbf{E}_{ai}}]$ if $i \in Q^{im}_{0}$. Here $a \in \mathbb{N}$.
    \end{corollary}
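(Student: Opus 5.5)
The plan is to build the triangle from three facts. The first is the isomorphism $\phi$ recalled in Section \ref{BB}. The second is that $[gr]$ is an injective algebra map (Proposition \ref{inj}). The third is that $[gr]$ sends generators to generators.

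\textbf{Step 1: $\phi$ is an isomorphism.} By \cite[Proposition 8.7]{HLCC} (together with the Fourier transform remark), $(\mathcal{K}'_{\oplus}(\mathcal{Q}),\star^{\Psi})$ is canonically isomorphic to $\mathbf{U}^{\mathbb{Z}}_{v}(\mathfrak{n}^{+}_{Q})$ via $[\mathbf{1}_{ai}]\mapsto E^{(a)}_{i}$ or $E_{i,a}$. So $\phi$ is the inverse of that isomorphism. It is well defined and unique, because the elements $E^{(a)}_i$ ($i$ real) and $E_{i,a}$ ($i$ imaginary) generate $\mathbf{U}^{\mathbb{Z}}_{v}(\mathfrak{n}^{+}_{Q})$ as a $\mathbb{Z}[v^{\pm}]$-algebra. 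Note that $E_{i,1}=E_i=E^{(1)}_i$, and the $E_{i,a}$ for $i$ real and $a\geqslant 2$ do not occur.

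\textbf{Step 2: generators go to generators.} I compute $[gr][\mathbf{1}_{ai}]$. The object $\mathbf{1}_{ai}$ is the pushforward of $\underline{\mathbb{Q}}^H_{\{0\}}$ along the closed embedding $0\hookrightarrow\mathbf{E}_{ai}$, taken $\mathbf{G}_{ai}$-equivariantly. Proposition \ref{epf} applies to this proper map. It gives $gr(\mathbf{1}_{ai})\cong \mathcal{O}_{T^{\ast}_{0}\mathbf{E}_{ai}}$, up to tensoring with an equivariant line bundle and a shift and twist; I compute that line bundle from $gr$ of $\omega_{pt}$ and the conormal correspondence. This is the same computation already used in the proof of Proposition \ref{inj}, where $Ch_0([gr][\mathbf{1}_{ai}])=[T^*_0\mathbf{E}_{ai}]$. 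I need it at the integral K-theory level with exact normalization.

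The main obstacle is to check that this line bundle is trivial and that no sign or power of $v$ appears. The point is that the Hodge filtration of $\underline{\mathbb{Q}}^H_{pt}$ sits in degree $0$. The conormal bundle to the origin is $\mathbf{G}_{ai}$-equivariantly the dual representation, and the Koszul description of $\mathcal{O}_0$ pulled back gives exactly $[\mathcal{O}_{T^*_0\mathbf{E}_{ai}}]$ with trivial twist. If a nontrivial character of $\mathbf{G}_{ai}\times\mathbb{C}^\times$ did appear, it would be a power of $\det$ of the dual fiber. I would absorb it by renormalizing generators, but I expect it to vanish with the paper's right $\mathcal{D}$-module conventions.

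\textbf{Step 3: conclude.} By Proposition \ref{inj}, $[gr]$ is an algebra homomorphism. By Step 2, it maps the generators $[\mathbf{1}_{ai}]$ of $\mathcal{K}'_{\oplus}(\mathcal{Q})$ onto the generators $[\mathcal{O}_{T^*_0\mathbf{E}_{ai}}]$ of $\mathcal{A}^{nil}_{zs}$. Hence the image of $[gr]$ is exactly $\mathcal{A}^{nil}_{zs}$. Injectivity, again from Proposition \ref{inj}, makes $[gr]$ an isomorphism onto $\mathcal{A}^{nil}_{zs}$. Now define $\psi:=[gr]\circ\phi$. It is an algebra isomorphism with $\psi(E^{(a)}_i)=[\mathcal{O}_{T^*_0\mathbf{E}_{ai}}]$, and the same holds for $E_{i,a}$. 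Any homomorphism with these values agrees with $\psi$ on generators, so $\psi$ is the unique such morphism and the triangle commutes by construction.
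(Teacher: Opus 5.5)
Your proposal is correct and matches the paper's argument. The paper gives no separate proof: the corollary is read off from Proposition \ref{inj}, namely that $[gr]$ is an injective algebra map, together with the isomorphism $\phi$ from \cite[Proposition 8.7]{HLCC} and the identity $[gr][\mathbf{1}_{ai}]=[\pi^{\ast}_{\mathbf{E}_{ai}}\mathcal{O}_{0}]=[\mathcal{O}_{T^{\ast}_{0}\mathbf{E}_{ai}}]$ used in that proof, so the image is $\mathcal{A}^{nil}_{zs}$ and $\psi=[gr]\circ\phi$. The normalization worry in your Step 2 does not arise, and renormalizing generators would in any case change the statement: since $\tilde{\mathbf{E}}_{(i),(a)}=\{0\}$ and $gr^{F}\omega_{pt}$ is $\mathbb{C}$ in degree $0$, Proposition \ref{epf} applied to $\{0\}\hookrightarrow\mathbf{E}_{ai}$ gives $gr(\mathbf{1}_{ai})\cong\mathcal{O}_{T^{\ast}_{0}\mathbf{E}_{ai}}$ directly, with no line bundle, shift or twist.
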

   
\end{proof}

\section{Reflection functors and associated graded modules}
In this section, we fix a vertex $i\in Q^{re}_{0}$ and take $Q=(Q_{0},Q_{1},s,t)$ such that $i$ is a source in $Q_{0}$, that is, $t(e) \neq i$ for any $e \in Q_{1}$. Let $Q'=\sigma_{i}Q$ be the quiver obtained by reversing all arrows adjacent to $i$, then $s(e) \neq i$ for any $e \in Q'_{1}$ and we say $i$ is a sink in $Q'$.
\subsection{Geometric BGP reflection functor for quivers}
Assume that $(\mathbf{V},x)=(\bigoplus\limits_{i \in Q_{0}}\mathbf{V}_{i},(x_{e})_{e\in Q_{1}})$ is a representation of $Q$ such that the simple representation at vertex $i \in Q_{0}$ doesn't appear as a direct summand of $(\mathbf{V},x)$, then the BGP reflection $\sigma_{i}(\mathbf{V},x)=(\mathbf{V}',x')$ is defined to be the representation of $Q'$ given by
\begin{enumerate}
	\item For any $j\neq i \in Q_{0}$, set $\mathbf{V}'_{j} =\mathbf{V}_{j}$ and $\mathbf{V}'_{i}= {\rm{Coker}} (\bigoplus\limits_{e \in Q_{1},s(e)=i}x_{e}: \mathbf{V}_{i} \rightarrow \bigoplus\limits_{e \in Q_{1},s(e)=i} \mathbf{V}_{t(e)}) ;$
	\item  For any $e \in Q_{1} \cap Q'_{1}$, set $x'_{e}=x_{e}$ and those $x'_{e}$ such that $t(e)=i$ are determined by the natural quotient map from $\bigoplus\limits_{e \in Q_{1},s(e)=i} \mathbf{V}_{t(e)}$ to the cokernel $\mathbf{V}'_{i}$.
\end{enumerate}
The BGP reflection induces Lusztig symmetries of half quantum groups on split Grothendieck groups. More precisely, since $i$ is a source in $Q$, there is a partition of $\mathbf{E}_{\alpha,Q}= \coprod\limits_{k \geqslant 0} \mathbf{E}^{(i)}_{\alpha,Q,k} $ for any $\alpha \in \mathbb{N}Q_{0}$ such that 
$$ \mathbf{E}^{(i)}_{\alpha,Q,k}=\{x \in \mathbf{E}_{\alpha,Q}| {\rm{dim}} ({\rm{Ker}} \bigoplus\limits_{e \in Q_{1},s(e)=i}x_{e}:\mathbb{C}^{\alpha_{i}} \rightarrow \bigoplus\limits_{e \in Q_{1},s(e)=i} \mathbb{C}^{\alpha_{t(e)}} ) =k.   \}   $$ 
Similarly, $\mathbf{E}_{\alpha,Q'}$ also admits a partition $\mathbf{E}_{\alpha,Q'}= \coprod\limits_{k \geqslant 0} {^{(i)}\mathbf{E}}_{\alpha,Q',k} $ such that 
$$ {^{(i)}\mathbf{E}}_{\alpha,Q',k}=\{x \in \mathbf{E}_{\alpha,Q'}| {\rm{dim}} ({\rm{Coker}} \bigoplus\limits_{e \in Q'_{1},t(e)=i}x_{e}: \bigoplus\limits_{e \in Q'_{1},t(e)=i} \mathbb{C}^{\alpha_{s(e)}} \rightarrow \mathbb{C}^{\alpha_{i}} ) =k.   \}   $$

Let $\mathcal{P}^{(i)}_{\alpha,Q}$ be the subset of $\mathcal{P}_{\alpha,Q}$ consisting of objects  whose supports intersect $ \mathbf{E}^{(i)}_{\alpha,Q,0}$ nontrivially, and let $\mathcal{Q}^{(i)}_{\alpha,Q}$ be the full subcategory of  $\mathcal{Q}_{\alpha,Q}$ consisting of complexes of the form $$\bigoplus\limits_{L \in \mathcal{P}^{(i)}_{\alpha,Q},k \in \mathbb{Z}}  L^{\oplus m_{L,k} }[k](\frac{k}{2})$$ such that $m_{L,k} \in \mathbb{N}$ and only finitely many $m_{L,k}$ are nonzero. Let $ \iota^{(i)} :\mathbf{E}^{(i)}_{\alpha,Q,0} \rightarrow \mathbf{E}_{\alpha,Q} $ be the open inclusion, then we have the following commutative diagram
\[
\xymatrix{
\mathcal{K}'_{\oplus} (\mathcal{Q}_{Q}) \ar[r]^{(\iota^{(i)})^{\ast}}  \ar[d]_{\phi^{-1}_{Q}}& \mathcal{K}'_{\oplus}(\mathcal{Q}^{(i)}_{Q}) \ar[d]^{\phi^{-1}_{Q}} \\
\mathbf{U}^{\mathbb{Z}}_{v}(\mathfrak{n}^{+}_{Q} ) \ar[r] &  \mathbf{U}^{\mathbb{Z},(i)}_{v}(\mathfrak{n}^{+}_{Q} ),
}
\] 
where $\mathbf{U}^{\mathbb{Z},(i)}_{v}(\mathfrak{n}^{+}_{Q} )= \mathbf{U}^{\mathbb{Z}}_{v}(\mathfrak{n}^{+}_{Q} )/\sum\limits_{a\geqslant 1}\mathbf{U}^{\mathbb{Z}}_{v}(\mathfrak{n}^{+}_{Q} )E^{(a)}_{i}$ and the bottom arrow is the quotient map. Here we identify $  \mathcal{K}'_{\oplus}(\mathcal{Q}^{(i)}_{Q}) $ with $ \mathcal{K}'_{\oplus}( (\iota^{(i)})^{\ast} \mathcal{Q}_{Q})$, since they are isomorphic to each other as $\mathbb{Z}[v^{\pm}]$-modules. Replace  $\iota^{(i)}:\mathbf{E}^{(i)}_{\alpha,Q,0} \rightarrow \mathbf{E}_{\alpha,Q} $ by ${^{(i)} \iota }: {^{(i)}\mathbf{E}}_{\alpha,Q',0} \rightarrow \mathbf{E}_{\alpha,Q'}$, one can define ${^{(i)} \mathcal{Q}_{\alpha,Q'}}$ in a similar way and get the following commutative diagram
\[
\xymatrix{
	\mathcal{K}'_{\oplus} (\mathcal{Q}_{Q'}) \ar[r]^{(^{(i)}\iota)^{\ast}}  \ar[d]_{\phi^{-1}_{Q'}}& \mathcal{K}'_{\oplus}(^{(i)}\mathcal{Q}_{Q'}) \ar[d]^{\phi^{-1}_{Q'}} \\
	\mathbf{U}^{\mathbb{Z}}_{v}(\mathfrak{n}^{+}_{Q'} ) \ar[r] &  ^{(i)}\mathbf{U}^{\mathbb{Z}}_{v}(\mathfrak{n}^{+}_{Q'} ),
}\] 
where $^{(i)}\mathbf{U}^{\mathbb{Z}}_{v}(\mathfrak{n}^{+}_{Q'} )= \mathbf{U}^{\mathbb{Z}}_{v}(\mathfrak{n}^{+}_{Q'} )/\sum\limits_{a\geqslant 1}E^{(a)}_{i}\mathbf{U}^{\mathbb{Z}}_{v}(\mathfrak{n}^{+}_{Q'} )$.

Assume that $\beta=s_{i}(\alpha)$, let $Z_{\alpha,\beta}$ be the subvariety of $\mathbf{E}_{\alpha,Q} \times \mathbf{E}_{\beta,Q'}$ whose close points are those $(x_{e})_{e \in Q_{1}} ,(x'_{e})_{e\in Q'_{1}}$ such that $x_{e}=x'_{e}$ if $e \in Q_{1} \cap Q'_{1}$, and $(x_{e})_{e \in Q_{1},s(e)=i} ,  (x'_{e})_{e \in Q'_{1},t(e)=i}  $ fit into the following short exact sequence
$$  0 \longrightarrow \mathbb{C}^{\alpha_{i}} \xrightarrow{x^{(i)}}    \bigoplus\limits_{e \in Q_{1},s(e)=i}\mathbb{C}^{\alpha_{t(e)}} = \bigoplus\limits_{e \in Q'_{1},t(e)=i}\mathbb{C}^{\beta_{s(e)}} \xrightarrow{{^{(i)}x'}}  \mathbb{C}^{\beta_{i}} \longrightarrow 0,  $$
where $x^{(i)}=\bigoplus\limits_{e \in Q_{1},s(e)=i}x_{e} $ and $^{(i)}x'=\bigoplus\limits_{e \in Q'_{1},t(e)=i}x'_{e}$.
The algebraic group $\mathbf{G}_{\alpha,\beta}= GL(\mathbb{C}^{\alpha_{i}}) \times GL(\mathbb{C}^{\beta_{i}})  \times  \prod_{j \neq i \in Q_{0}} GL(\mathbb{C}^{\alpha_{j}})  $ acts naturally on $Z_{\alpha,\beta}$.
By restricting projections $pr_{1},pr_{2}: \mathbf{E}_{\alpha,Q} \times \mathbf{E}_{\beta,Q'} \rightarrow \mathbf{E}_{\alpha,Q}, \mathbf{E}_{\beta,Q'} $ to $Z_{\alpha,\beta}$, we obtain a $GL(\mathbb{C}^{\beta_{i}})$-principal bundle $pr_{1}:Z_{\alpha,\beta} \rightarrow  \mathbf{E}^{(i)}_{\alpha,Q,0}$ and a $GL(\mathbb{C}^{\alpha_{i}})$-principal bundle 
$pr_{2}:Z_{\alpha,\beta} \rightarrow {^{(i)} \mathbf{E}_{\beta,Q',0}}$,
\[
\xymatrix{
& Z_{\alpha,\beta} \ar[dl]_{pr_{1}} \ar[dr]^{pr_{2}} &\\
\mathbf{E}^{(i)}_{\alpha,Q,0}& &{^{(i)} \mathbf{E}_{\beta,Q',0}}}.
\] 
Therefore we can define the following geometric BGP reflection functor
$$\mathbf{S}_{i} : \mathcal{D}^{b}_{\mathbf{G}_{\alpha}}({\rm{MHM}}(\mathbf{E}^{(i)}_{\alpha,Q,0},\mathbf{k}) ) \rightarrow  \mathcal{D}^{b}_{\mathbf{G}_{\beta}}({\rm{MHM}}({^{(i)}\mathbf{E}}_{\beta,Q',0},\mathbf{k}) ),$$ $$\mathcal{M} \mapsto (pr^{\ast}_{2} {\rm{Infl}}^{\mathbf{G}_{\alpha,\beta}}_{\mathbf{G}_{\beta}} )^{-1} pr^{\ast}_{1} {\rm{Infl}}^{\mathbf{G}_{\alpha,\beta}}_{\mathbf{G}_{\alpha}} \mathcal{M}[\alpha^{2}_{i}-\beta^{2}_{i}](\frac{1}{2}(\alpha^{2}_{i}-\beta^{2}_{i})  ). $$
which is a derived equivalence and induces a $\mathbb{Z}[v^{\pm}]$-linear isomorphism 
$[\mathbf{S}_{i}]:\mathcal{K}'_{\oplus} (\mathcal{Q}^{(i)}_{Q}) \rightarrow \mathcal{K}'_{\oplus} (^{(i)}\mathcal{Q}_{Q'}). $ 
By a similar argument as \cite[II.7]{Coha26} or \cite{BGP}, one can show that $\mathbf{S}_{i}$ fits into the following commutative diagram
\[
\xymatrix{
\mathcal{K}'_{\oplus} (\mathcal{Q}^{(i)}_{Q}) \ar[r]^{[\mathbf{S}_{i}]} \ar[d]_{\phi^{-1}_{Q}} & \mathcal{K}'_{\oplus} (^{(i)}\mathcal{Q}_{Q'}) \ar[d]^{\phi^{-1}_{Q'}} \\
 \mathbf{U}^{\mathbb{Z},(i)}_{v}(\mathfrak{n}^{+}_{Q} ) \ar[r]^{T''_{i,1}}   & ^{(i)}\mathbf{U}^{\mathbb{Z}}_{v}(\mathfrak{n}^{+}_{Q'} ),
}
\]
 denote the inverse of $\mathbf{S}_{i}$ by $\mathbf{S}'_{i}$, then there is another commutative diagram 
 \[
 \xymatrix{
 	\mathcal{K}'_{\oplus} (^{(i)}\mathcal{Q}_{Q'}) \ar[r]^{[\mathbf{S}'_{i}]}  \ar[r]^{[\mathbf{S}_{i}]} \ar[d]_{\phi^{-1}_{Q'}} &  \mathcal{K}'_{\oplus} (\mathcal{Q}^{(i)}_{Q})  \ar[d]^{\phi^{-1}_{Q}} \\
 ^{(i)}\mathbf{U}^{\mathbb{Z}}_{v}(\mathfrak{n}^{+}_{Q'} )	 \ar[r]^{T'_{i,-1}}   & \mathbf{U}^{\mathbb{Z},(i)}_{v}(\mathfrak{n}^{+}_{Q} ),
 }
 \]
 where $T'_{i,-1},T''_{i,1}$ are Lusztig's symmetries defined in \cite[Chapter 38]{lusztig2010introduction}.
 
\subsection{Derived reflection functor for preprojective algebra}
Let $I_{i}= \Pi_{Q}(1-e_{i}) \Pi_{Q}$ be the ideal of $\Pi_{Q}$, where $e_{i}$ is the idempotent associated to the vertex $i$, then it is a tilting $(\Pi_{Q},\Pi_{Q})$-bimodule such that ${\rm{End}}(I_{i})=\Pi_{Q}$. There are torsion pairs $(\mathcal{T}_{i},\mathcal{F}_{i})$ and $(\mathcal{T}^{i},\mathcal{F}^{i})$ induced from $I_{i}$ given by
$$ \mathcal{T}_{i}=\{ M \in {\rm{Mod}}(\Pi_{Q}) , I_{i} \otimes_{\Pi_{Q}}(M)=0 \}, $$
$$ \mathcal{F}_{i}=\{ M \in {\rm{Mod}}(\Pi_{Q}) ,{\rm{Tor}}^{1}_{\Pi_{Q}} (I_{i},M)=0 \}, $$
$$ \mathcal{T}^{i}=\{ M \in {\rm{Mod}}(\Pi_{Q}) ,{\rm{Ext}}^{1}_{\Pi_{Q}} (I_{i},M)=0 \}, $$
$$ \mathcal{F}^{i}=\{ M \in {\rm{Mod}}(\Pi_{Q}) ,{\rm{Hom}}_{\Pi_{Q}} (I_{i},M)=0 \}. $$
Let $R_{i}=I_{i} \otimes_{\Pi_{Q}}(-)$ and $R'_{i}={\rm{Hom}}_{\Pi_{Q}}(I_{i},-)$, then by \cite[Theorem 5.4]{BKT14} and \cite[Proposition 2.7]{SY13} tilting theory provides mutually inverse equivalences  
\[
\xymatrix{
	\mathcal{F}_{i} \ar@<0.5ex>[r]^{R_{i}} & \mathcal{T}^{i} \ar@<0.5ex>[l]^{R'_{i}}
}.
\]

Let $\Lambda^{(i)}_{\alpha,Q}$ be the open subvariety of $\Lambda_{\alpha,Q}$ consisting of $(x,\bar{x})$ such that $x^{(i)}= \bigoplus\limits_{e \in Q_{1},s(e)=i} x_{e}$ is injective, then the stack $\mathfrak{N}^{(i)}_{\alpha,Q}=\Lambda^{(i)}_{\alpha,Q}/\mathbf{G}_{\alpha}$ parameterizes the representations of $\Pi_{Q}$ in $\mathcal{F}_{i}$ with dimension vector $\alpha$.
Similarly, let $^{(i)}\Lambda_{\alpha,Q'}$ be the open subvariety of $\Lambda_{\alpha,Q'}$ consisting of $(x',\bar{x}')$ such that $^{(i)}x'= \bigoplus\limits_{e \in Q_{1},t(e)=i} x'_{e}$ is surjective, then the stack $^{(i)}\mathfrak{N}_{\alpha,Q'}={^{(i)}\Lambda}_{\alpha,Q'}/\mathbf{G}_{\alpha}$ parameterizes the representations of $\Pi_{Q'}=\Pi_{Q}$ in $\mathcal{F}^{i}$ with dimension vector $\alpha$. Here the subscript $Q'$ of $\Lambda_{\alpha,Q'}$ means that $\mathbb{C}^{\times}$ acts trivially on $x'_{e}, e\in Q'_{1}$ and scales $\bar{x}'_{e^{\ast}},e^{\ast} \in Q^{',\ast}_{1}$ with weight $-1$. Following \cite[2.2]{BK12}, the equivalence $R_{i}$ induces  an isomorphism between moduli stacks $\mathfrak{N}^{(i)}_{\alpha,Q}$ and $^{(i)}\mathfrak{N}_{\beta,Q'}$ such that $s_{i}(\alpha)=\beta$, which is given by the following diagram
\[
\xymatrix{ & \overline{Z}_{\alpha,\beta} \ar[dr]^{\rho_{2}} \ar[dl]_{\rho_{1}} &\\
\Lambda^{(i)}_{\alpha,Q}	& & ^{(i)}\Lambda_{\beta,Q'}.
}\]  
Here the variety $\overline{Z}_{\alpha,\beta}$ consists of $(x,\bar{x},\rho)$, where $(x,\bar{x}) \in \Lambda^{(i)}_{\alpha,Q}$ and $\rho:  {\rm{Coker}}(x^{(i)} ) \cong \mathbb{C}^{\beta_{i}} $ is a linear isomorphism, the map $\rho_{1}$ is the projection and the map $\rho_{2} (x,\bar{x},\rho) =(x',\bar{x}')$ is determined by the following conditions
\begin{enumerate}
	\item If  $e\in Q_{1} \cap Q'_{1}$, set $x'_{e}=x_{e}$, and if $e^{\ast} \in Q^{\ast}_{1} \cap Q^{',\ast}_{1}$, set  $\bar{x}'_{e^{\ast}}=\bar{x}_{e^{\ast}}$.
	\item If $e\in Q'_{1}$ such that $t(e)=i$, set $x'_{e}$ be the composition $$\mathbb{C}^{\beta_{s(e)}} \rightarrow  \bigoplus\limits_{e \in Q'_{1},t(e)=i} \mathbb{C}^{\beta_{s(e)}}=\bigoplus \limits_{e \in Q_{1},s(e)=i} \mathbb{C}^{\alpha_{t(e)}} \rightarrow  {\rm{Coker}}(x^{(i)} ) \xrightarrow{\rho} \mathbb{C}^{\beta_{i}},$$
	where the first arrow is the natural inclusion and the second arrow is the quotient map. 
	\item The preprojective relation implies that the composition ${^{(i)}\bar{x}} \circ x^{(i)} \in {\rm{End}}(\mathbb{C}^{\alpha_{i}}) $ of $ x^{(i)}$ and  ${^{(i)}\bar{x}}=\bigoplus\limits_{e \in Q^{\ast}_{1},t(e^{\ast})=i}\bar{x}_{e^{\ast}} : \bigoplus\limits_{e \in Q^{\ast}_{1},t(e^{\ast})=i} \mathbb{C}^{\alpha_{s(e^{\ast})}} \rightarrow  \mathbb{C}^{\alpha_{i}}$ is zero, hence $ {^{(i)}\bar{x}} $ induces a morphism $  {\rm{Coker}}(x^{(i)} ) \rightarrow \mathbb{C}^{\alpha_{i}}$, which is still denoted by $ {^{(i)}\bar{x}} $. If $e^{\ast} \in Q^{',\ast}_{1}$ such that $s(e^{\ast})=i$, set $x'_{e^{\ast}}$ be the composition
	$$ \mathbb{C}^{\beta_{i}} \xrightarrow{ \rho^{-1}} {\rm{Coker}}(x^{(i)} ) \xrightarrow{ {^{(i)}\bar{x}}} \mathbb{C}^{\alpha_{i}} \xrightarrow{ x^{(i)}} \bigoplus\limits_{e \in Q_{1},s(e)=i} \mathbb{C}^{\alpha_{t(e)}} =\bigoplus\limits_{e \in Q^{',\ast}_{1},s(e^{\ast})=i} \mathbb{C}^{\beta_{t(e^{\ast})}}   \rightarrow \mathbb{C}^{\beta_{t(e^{\ast})}}, $$
	where the last arrow is the natural projection. 
\end{enumerate} 
In particular, the preprojective relation still holds for $(x',\bar{x}')$ and $^{(i)}x'$ is surjective by definition. It's easy to see that $\rho_{1}$ is a principal $GL(\mathbb{C}^{\beta_{i}})$-bundle, $\rho_{2}$ is a principal $GL(\mathbb{C}^{\alpha_{i}})$-bundle and both $\rho_{1}$ and $\rho_{2}$ are $\mathbb{C}^{\times}$-equivariant. The diagram above defines a derived equivalence 
$$ \mathbf{R}_{i}: \mathcal{D}^{b}( {\rm{Qcoh}}^{\mathbf{G}_{\alpha} \times \mathbb{C}^{\times}}(\Lambda^{(i)}_{\alpha,Q})    ) \rightarrow \mathcal{D}^{b}( {\rm{Qcoh}}^{\mathbf{G}_{\beta} \times \mathbb{C}^{\times}}(^{(i)}\Lambda_{\beta,Q'})    ),$$  $$\mathcal{M} \mapsto  (\rho^{\ast}_{2} {\rm{Infl}}^{\mathbf{G}_{\alpha,\beta}}_{\mathbf{G}_{\beta}} )^{-1} \rho^{\ast}_{1} {\rm{Infl}}^{\mathbf{G}_{\alpha,\beta}}_{\mathbf{G}_{\alpha}} \mathcal{M}[2(\alpha^{2}_{i}- \beta^{2}_{i})] \langle \beta^{2}_{i}-\alpha^{2}_{i} \rangle, $$
which induces a linear map $[\mathbf{R}_{i}]: \mathcal{A}^{nil,(i)}_{\alpha} \rightarrow {^{(i)}\mathcal{A}}^{nil}_{\beta}. $ Here $ \mathcal{A}^{nil,(i)}_{\alpha}= K_{\mathbb{C}^{\times}}(\mathfrak{N}^{(i)}_{\alpha})$ and ${^{(i)}\mathcal{A}}^{nil}_{\beta} =K_{\mathbb{C}^{\times}}({^{(i)}\mathfrak{N} }_{\beta} ). $ 

\subsection{Compare reflection functors}
Consider the full subcategory $\mathcal{D}^{b}_{\mathbf{G}_{\alpha}}({\rm{MHM}} (\mathbf{E}^{(i)}_{\alpha,Q,0},\Lambda^{(i)}_{\alpha,Q},\mathbf{k} ))'$ of $\mathcal{D}^{b}_{\mathbf{G}_{\alpha}}({\rm{MHM}} (\mathbf{E}^{(i)}_{\alpha,Q,0},\mathbf{k} ))'$ and denote it by $\mathcal{D}^{nil,(i)}_{\alpha}$, and similarly denote the full subcategory 
$\mathcal{D}^{b}_{\mathbf{G}_{\alpha}}({\rm{MHM}} ({^{(i)}\mathbf{E}}_{\alpha,Q',0},{^{(i)}\Lambda}_{\alpha,Q'},\mathbf{k} ))'$ by $^{(i)} \mathcal{D}^{nil}_{\alpha}$.  
\begin{proposition}
	The $\mathbb{Z}[v^{\pm}]$-linear map $[gr]$ intertwines the geometric BGP reflection functors $[\mathbf{S}_{i}]$ and the derived reflection functors $[\mathbf{R}_{i}]$. More precisely, assume $s_{i}(\alpha)=\beta \in \mathbb{N}Q_{0}$, then there is a commutative diagram
	\[
	\xymatrix{
  \mathcal{K}_{0}(\mathcal{D}^{nil,(i)}_{\alpha})  \ar[r]^{[\mathbf{S}_{i}]} \ar[d]_{[gr]}	& \mathcal{K}_{0}({^{(i)} \mathcal{D}}^{nil}_{\beta}) \ar[d]^{[gr]} \\
\mathcal{A}^{nil,(i)}_{\alpha} \ar[r]^{[\mathbf{R}_{i}]}	&  {^{(i)}\mathcal{A}}^{nil}_{\beta}. }
	\]
\end{proposition}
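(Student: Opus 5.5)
We unwind both reflection functors into a pullback along a principal bundle followed by the inverse of a quotient equivalence, and compare them with $gr$ step by step using Proposition \ref{eqo}.

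\textbf{Step 1: identify the cotangent geometry.} By definition,
$$\mathbf{S}_{i}=(pr^{\ast}_{2}{\rm{Infl}}^{\mathbf{G}_{\alpha,\beta}}_{\mathbf{G}_{\beta}})^{-1}\, pr^{\ast}_{1}{\rm{Infl}}^{\mathbf{G}_{\alpha,\beta}}_{\mathbf{G}_{\alpha}}[\alpha_i^2-\beta_i^2]\big(\tfrac{1}{2}(\alpha_i^2-\beta_i^2)\big),$$
where $pr_1$ is a principal $GL(\mathbb{C}^{\beta_i})$-bundle and $pr_2$ a principal $GL(\mathbb{C}^{\alpha_i})$-bundle over $Z_{\alpha,\beta}$. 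The key geometric input is that the two correspondences are compatible:
$$\mu^{-1}_{GL(\beta_i)}(0)\subseteq T^{\ast}Z_{\alpha,\beta}\ \text{(for $pr_1$)},\qquad \mu^{-1}_{GL(\alpha_i)}(0)\subseteq T^{\ast}Z_{\alpha,\beta}\ \text{(for $pr_2$)}.$$
I would show that the intersection $\mu^{-1}_{GL(\alpha_i)\times GL(\beta_i)}(0)$, after restricting to the preimages of $\Lambda^{(i)}_{\alpha,Q}$ and ${^{(i)}\Lambda}_{\beta,Q'}$, is identified with $\overline{Z}_{\alpha,\beta}$. This identification should be $\mathbf{G}_{\alpha,\beta}\times\mathbb{C}^{\times}$-equivariant, and under it the two quotient maps $\tilde{pr}_1,\tilde{pr}_2$ become $\rho_1,\rho_2$ (possibly up to the sign automorphism $-1$ on fibres, which acts trivially on $K$-groups, as in the previous section).

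Concretely, a covector on $Z_{\alpha,\beta}$ annihilating the $GL(\beta_i)$-orbit directions is a pair $(\bar{x},\bar{x}')$ of dual arrows. The moment map condition for $GL(\beta_i)$ forces $\bar{x}'$ at $i$ to be determined by $\bar{x}$ through $\rho$, exactly as in condition (3) of the definition of $\rho_2$. The condition for $GL(\alpha_i)$ gives the symmetric statement. The $\mathbb{C}^{\times}$-weights agree, since both sides scale the starred arrows by $-1$. This linear algebra, including checking that the nilpotency conditions defining $\Lambda$ match under the identification, is the step I expect to be the main obstacle. I would first verify it at the level of closed points, using that $R_i$ is the BGP reflection on the $Q$-part together with the description in \cite[2.2]{BK12}, and then argue that the identification is scheme-theoretic because both sides are smooth of the same dimension over $Z_{\alpha,\beta}$.

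\textbf{Step 2: apply Proposition \ref{eqo} twice.} Using the identification of Step 1, Proposition \ref{eqo} gives
$$gr\big(pr_1^{\ast}{\rm{Infl}}\,\mathcal{M}\big)\cong \rho_1^{\ast}{\rm{Infl}}\,gr(\mathcal{M})\otimes\omega_{pr_1}[\beta_i^2]\langle-2\beta_i^2\rangle.$$
Applying the same proposition to $pr_2$ and inverting the equivalence gives
$$gr\big((pr_2^{\ast}{\rm{Infl}})^{-1}\mathcal{N}\big)\cong (\rho_2^{\ast}{\rm{Infl}})^{-1}\big(gr(\mathcal{N})\otimes\omega^{-1}_{pr_2}\big)[-\alpha_i^2]\langle 2\alpha_i^2\rangle.$$
The Tate twist $(\tfrac{1}{2}(\alpha_i^2-\beta_i^2))$ contributes $\langle\alpha_i^2-\beta_i^2\rangle$. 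Adding up, the total shift and twist is
$$[2(\alpha_i^2-\beta_i^2)]\,\langle\beta_i^2-\alpha_i^2\rangle,$$
which matches the normalisation in the definition of $\mathbf{R}_i$.

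\textbf{Step 3: the relative canonical bundles.} It remains to show that the line bundle $\omega_{pr_1}\otimes\omega_{pr_2}^{-1}$ is trivial, or at least acts trivially on $K$-classes. Each fibre is a general linear group, so $\omega_{pr_1}$ corresponds to the determinant of the adjoint representation $\mathfrak{gl}(\beta_i)$, and $\omega_{pr_2}$ to that of $\mathfrak{gl}(\alpha_i)$. Both determinants are trivial characters, and the bundles carry trivial $\mathbb{C}^{\times}$-weight. Hence both relative canonical sheaves are equivariantly trivial.

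Taking classes in $K$-theory then gives $[gr]\circ[\mathbf{S}_i]=[\mathbf{R}_i]\circ[gr]$, which is the commutativity of the diagram in the statement.
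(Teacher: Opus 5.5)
Your plan has the same skeleton as the paper's proof. You identify $\Lambda^{(i)}_{\alpha,Q}\times_{\mathbf{E}_{\alpha,Q}}Z_{\alpha,\beta}$, which sits in the cotangent correspondences of both principal bundles $pr_1,pr_2$, with $\overline{Z}_{\alpha,\beta}$. You then apply Proposition \ref{eqo} on each side and dispose of the relative canonical bundles.

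Your Step 3 is a genuinely different and cleaner argument. The paper rewrites $\omega_{pr_1}\otimes\omega_{pr_2}^{-1}\cong pr_1^{\ast}\omega^{-1}_{\mathbf{E}^{(i)}_{\alpha,Q,0}}\otimes pr_2^{\ast}\omega_{{^{(i)}\mathbf{E}}_{\beta,Q',0}}$. It then cancels determinants of tautological bundles using the exact sequence $0\to\mathcal{V}_{\alpha_i}\to\bigoplus_{s(e)=i}\mathcal{V}_{\alpha_{t(e)}}\to\mathcal{V}_{\beta_i}\to 0$ on $Z_{\alpha,\beta}$. You instead use that the vertical tangent bundle of a principal $GL_n$-bundle is $\mathcal{O}\otimes\mathfrak{gl}_n$ with the adjoint action, whose determinant is trivial; the commuting factor of $\mathbf{G}_{\alpha,\beta}$ fixes the fundamental vector fields. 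This is correct, avoids the determinant bookkeeping, and shows that each $\omega_{pr_k}$ is separately equivariantly trivial, which is more than the paper needs.

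The genuine problem is the arithmetic in Step 2. Your displayed contributions are:
\begin{itemize}
\item $[\beta_i^2]\langle-2\beta_i^2\rangle$ from $pr_1$;
\item $[-\alpha_i^2]\langle 2\alpha_i^2\rangle$ from inverting $pr_2$;
\item $[\alpha_i^2-\beta_i^2]\langle\alpha_i^2-\beta_i^2\rangle$ from the shift and Tate twist built into $\mathbf{S}_i$.
\end{itemize}
These add up to $[0]\langle 3(\alpha_i^2-\beta_i^2)\rangle$, not to $[2(\alpha_i^2-\beta_i^2)]\langle\beta_i^2-\alpha_i^2\rangle$. The shift has the right parity, so that part is harmless in $K$-theory. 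The power of $v$ is not: taken at face value, your computation gives $[gr]\circ[\mathbf{S}_i]=v^{4(\alpha_i^2-\beta_i^2)}[\mathbf{R}_i]\circ[gr]$, and $\alpha_i\neq\beta_i$ in general.

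The total you assert is also the one the paper's proof records, without intermediate steps. It is exactly what comes out if pulling back along a principal $H$-bundle contributes $[-\dim H]\langle 2\dim H\rangle$ rather than $[\dim H]\langle-2\dim H\rangle$ as in Proposition \ref{eqo}. So you must pin down the convention behind Proposition \ref{pb}: which twist of $f^{\ast}[\pm d]\mathcal{M}$ carries the filtration $\omega_{X/Y}\otimes f^{\ast}F_{\bullet}\mathcal{M}$, given $gr(\mathcal{M}(\tfrac{1}{2}))=gr(\mathcal{M})\langle 1\rangle$. Then check that convention against the normalisations of $\mathbf{S}_i$ and $\mathbf{R}_i$. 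The match cannot simply be asserted.

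A smaller point on Step 1. If you carry out the linear algebra, the pullback of $(x,\bar{x})$ along $pr_1$ agrees with the pullback of $(x',\bar{x}')$ along $pr_2$ exactly when $\bar{x}'=\bar{x}$ away from $i$ and the starred arrows at $i$ are given by $-x^{(i)}\circ{^{(i)}\bar{x}}\circ\rho^{-1}$, composed with the projections. This sign sits only on the starred arrows at $i$, so it is not the fibrewise $-1$ you invoke. It is, however, exactly the sign for which $(x',\bar{x}')$ satisfies the preprojective relation at the neighbours of $i$. So the correspondence produces the correct reflection, and condition (3) in the definition of $\rho_2$ should be read with this sign.

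Finally, no smoothness argument is needed: the identification is given by explicit mutually inverse formulas on the vector bundle $\{{^{(i)}\bar{x}}\circ x^{(i)}=0\}$ over $Z_{\alpha,\beta}$, which you then restrict to $\Lambda$. In any case $\overline{Z}_{\alpha,\beta}$ itself is not smooth over $Z_{\alpha,\beta}$.
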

\begin{proof}
	We claim that there exists an isomorphism $  \Lambda_{\alpha,Q} \times_{\mathbf{E}_{\alpha,Q}} Z_{\alpha,\beta} \cong \overline{Z}_{\alpha,\beta} $. Indeed, denote an element of $\Lambda_{\alpha,Q} \times_{\mathbf{E}_{\alpha,Q}} Z_{\alpha,\beta}$ by $(x,\bar{x},x')$ such that $(x,\bar{x}) \in \Lambda_{\alpha,Q}$ and $x' \in \mathbf{E}_{\beta,Q'}$, the morphism $x^{(i)}$ and $^{(i)}x'$ form an exact sequence implies that $^{(i)}x'$ induces a surjective linear map $^{(i)}\tilde{x}':{\rm{Coker}} (x^{(i)}) \rightarrow \mathbb{C}^{\beta_{i}}$. Since ${\rm{Coker}} (x^{(i)})$ has dimension $\beta_{i}$, we know that  $^{(i)}\tilde{x}'$ is a linear isomorphism. Conversely, given $(x,\bar{x}, \rho) \in \overline{Z}_{\alpha,\beta}$, let $^{(i)}x'$ be the composition $ \bigoplus\limits_{e \in Q_{1},s(e)=i}\mathbb{C}^{\alpha_{t(e)}} \rightarrow {\rm{Coker}}(x^{(i)}) \xrightarrow{\rho} \mathbb{C}^{\beta_{i}} $, then $^{(i)}x'$ and $x^{(i)}$ form a short exact sequence, and this $^{(i)}x'$ (together with $x$) determines an element $(x,x')$ of $Z_{\alpha,\beta}$. One can check by definition that $(x,\bar{x},x') \mapsto (x,\bar{x}, {^{(i)}\tilde{x}}') $
	and $(x,\bar{x}, \rho) \mapsto (x,\bar{x},x') $ are mutually inverse. In particular, the cotangent correspondence of $pr_{1},pr_{2}$ fit into the diagram of $\rho_{1},\rho_{2}$.
 	
	By Proposition \ref{eqo}, we have the following equation
	\begin{equation}
		\begin{split}
		 &gr (\mathbf{S}_{i} \mathcal{M}) \\
		\cong & gr (pr^{\ast}_{2} {\rm{Infl}}^{\mathbf{G}_{\alpha,\beta}}_{\mathbf{G}_{\beta}} )^{-1} pr^{\ast}_{1} {\rm{Infl}}^{\mathbf{G}_{\alpha,\beta}}_{\mathbf{G}_{\alpha}} \mathcal{M}[\alpha^{2}_{i}-\beta^{2}_{i}](\frac{1}{2}(\alpha^{2}_{i}-\beta^{2}_{i})    \\
		\cong &  (\rho^{\ast}_{2} {\rm{Infl}}^{\mathbf{G}_{\alpha,\beta}}_{\mathbf{G}_{\beta}} )^{-1} (\rho^{\ast}_{1} {\rm{Infl}}^{\mathbf{G}_{\alpha,\beta}}_{\mathbf{G}_{\alpha}} gr(\mathcal{M}) \otimes p^{\ast}\omega_{pr_{1}} \otimes p^{\ast}\omega^{-1}_{pr_{2}} )[2(\alpha^{2}_{i}-\beta^{2}_{i})] \langle (-\alpha^{2}_{i}+\beta^{2}_{i})  \rangle,
		\end{split}
	\end{equation}
	where $p: \overline{Z}_{\alpha,\beta} \rightarrow  Z_{\alpha,\beta} $ is the projection $(x,\bar{x},x') \mapsto (x,x')$. (Here we identify $\overline{ Z}_{\alpha,\beta}$ with  $  \Lambda_{\alpha,Q} \times_{\mathbf{E}_{\alpha,Q}} Z_{\alpha,\beta} \cong \overline{Z}_{\alpha,\beta} $.) 
	
	It remains to show that $p^{\ast}\omega_{pr_{1}} \otimes p^{\ast}\omega^{-1}_{pr_{2}} $ is trivial. Since $p^{\ast}\omega_{pr_{1}} \otimes p^{\ast}\omega^{-1}_{pr_{2}} \cong p^{\ast} ( \omega_{pr_{1}} \otimes \omega^{-1}_{pr_{2}}) \cong p^{\ast}( pr^{\ast}_{1}\omega^{-1}_{\mathbf{E}^{(i)}_{\alpha,Q,0}} \otimes pr^{\ast}_{2} \omega_{{^{(i)}\mathbf{E}}_{\beta,Q',0}} )$, it suffices to  show that $pr^{\ast}_{1}\omega^{-1}_{\mathbf{E}^{(i)}_{\alpha,Q,0}} \otimes pr^{\ast}_{2} \omega_{{^{(i)}\mathbf{E}}_{\beta,Q',0}} $ is trivial. With the notations in the proof of Proposition \ref{inj}, the sheaf $pr^{\ast}_{1}\omega^{-1}_{\mathbf{E}^{(i)}_{\alpha,Q,0}}$ contributes $det ( \bigoplus\limits_{e \in Q_{1},s(e)=i}\mathcal{V}_{\alpha_{i}} \otimes \mathcal{V}^{\vee}_{\alpha_{t(e)} } ) $  and $det(\bigoplus\limits_{e \in Q_{1}\cap Q'_{i}}  \mathcal{V}_{\alpha_{s(e)}} \otimes \mathcal{V}^{\vee}_{\alpha_{t(e)}} )$, and $pr^{\ast}_{2} \omega_{{^{(i)}\mathbf{E}}_{\beta,Q',0}}$ contributes $det ( \bigoplus\limits_{e \in Q'_{1},t(e)=i}\mathcal{V}^{\vee}_{\beta_{s(e)}} \otimes \mathcal{V}_{\beta_{i} } ) $ and $det(\bigoplus\limits_{e \in Q_{1}\cap Q'_{i}}  \mathcal{V}^{\vee}_{\beta_{s(e)}} \otimes \mathcal{V}_{\beta_{t(e)}} )$, where $\vee$ means taking dual vector bundle. Notice that on $Z_{\alpha,\beta}$, $\mathcal{V}_{\alpha_{j}}=\mathcal{V}_{\beta_{j}}$ for any $j \neq i$, and $det(\mathcal{V}_{\alpha_{i}})\otimes det(\mathcal{V}_{\beta_{i}}) \cong det ( \bigoplus\limits_{e \in Q_{1},s(e)=i} \mathcal{V}_{\alpha_{t(e)} } )=det ( \bigoplus\limits_{e \in Q'_{1},t(e)=i}\mathcal{V}_{\beta_{s(e)}})$ since they form a short exact sequence, we finish the proof.
\end{proof}

Let $\tilde{\iota}^{(i)}: \Lambda^{(i)}_{\alpha} \rightarrow \Lambda_{\alpha}$ be the open inclusion and denote $(\tilde{\iota}^{(i)})^{\ast} \mathcal{A}^{nil}_{zs,\alpha}$ by $\mathcal{A}^{nil,(i)}_{zs,\alpha}$. Simiarly, one can define $^{(i)}\tilde{\iota}$ and $^{(i)}\mathcal{A}^{nil}_{zs,\beta}$. Apply Corollary \ref{ciso}, we get the following corollary.
\begin{corollary}
	\begin{enumerate}
		\item   The kernel of $(\tilde{\iota}^{(i)})^{\ast}:\mathcal{A}^{nil}_{zs,\alpha} \rightarrow \mathcal{A}^{nil,(i)}_{zs,\alpha}$ is $\sum\limits_{a= 1}\limits^{\alpha_{i}}\mathcal{A}^{nil}_{zs,\alpha-ai}\ast [\mathcal{O}_{T^{\ast}_{0} \mathbf{E}_{ai,Q} }] $, and the kernel of $({^{(i)}\tilde{\iota}})^{\ast}:\mathcal{A}^{nil}_{zs,\beta} \rightarrow {^{(i)}\mathcal{A}}^{nil}_{zs,\beta}$ is $ \sum\limits_{a= 1}^{\beta_{i}}[\mathcal{O}_{T^{\ast}_{0} \mathbf{E}_{ai,Q'} }] \ast \mathcal{A}^{nil}_{zs,\beta-ai}$.
		\item 	There is a commutative diagram  of isomorphisms of $\mathbb{Z}[v^{\pm}]$-modules
		\[
		\xymatrix{
			& \mathcal{K}'_{\oplus}( \mathcal{Q}^{(i)}_{\alpha,Q} ) \ar[r]^{[\mathbf{S}_{i}]} \ar[dd] &  \mathcal{K}'_{\oplus}({^{(i)}\mathcal{Q}}_{\beta,Q'}) \ar[dd] & \\
			\mathbf{U}^{\mathbb{Z},(i)}_{v} (\mathfrak{n}^{+}_{Q}) \ar[rrr]^{T''_{i,1}} \ar[ru]^{\phi_{Q}} \ar[rd]_{\psi_{Q}}	& & & ^{(i)}\mathbf{U}^{\mathbb{Z}}_{v} (\mathfrak{n}^{+}_{Q'}) \ar[lu]_{\phi_{Q'}} \ar[ld]^{\psi_{Q'}}  \\
			&  \mathcal{A}^{nil,(i)}_{zs,\alpha} \ar[r]^{\mathbf{R}_{i}} &  ^{(i)}\mathcal{A}^{nil}_{zs,\beta} &,
		}
		\]
		where  the vertical arrows are given by $[gr]$.
	\end{enumerate}
\end{corollary}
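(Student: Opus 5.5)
The plan is to transport everything through the isomorphisms of Corollary \ref{ciso} and the commutative squares already established for $[\mathbf{S}_{i}]$ and $[gr]$.

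\emph{Part (1).} Start on the mixed Hodge module side. The first commutative square of this section identifies the kernel of $(\iota^{(i)})^{\ast}:\mathcal{K}'_{\oplus}(\mathcal{Q}_{\alpha,Q})\to\mathcal{K}'_{\oplus}(\mathcal{Q}^{(i)}_{\alpha,Q})$. Under $\phi_{Q}^{-1}$ this kernel corresponds to $\sum_{a\geqslant 1}\mathbf{U}^{\mathbb{Z}}_{v}(\mathfrak{n}^{+}_{Q})E^{(a)}_{i}$ in degree $\alpha$, which is $\sum_{a=1}^{\alpha_{i}}\mathcal{K}'_{\oplus}(\mathcal{Q}_{\alpha-ai})\star^{\Psi}[\mathbf{1}_{ai}]$.

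Next I will check that $[gr]$ commutes with restriction to open subsets. Taking associated graded modules is local, so $gr$ commutes with open restriction, and the singular support of a restriction is the restriction of the singular support. This gives a commutative square with $(\iota^{(i)})^{\ast}$ on top, $(\tilde{\iota}^{(i)})^{\ast}$ on the bottom, and the two $[gr]$ maps as vertical arrows.

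The vertical maps are isomorphisms. The left one is $[gr]:\mathcal{K}'_{\oplus}(\mathcal{Q}_{\alpha})\to\mathcal{A}^{nil}_{zs,\alpha}$ by Corollary \ref{ciso}. On the target, $\mathcal{A}^{nil,(i)}_{zs,\alpha}$ is by definition the image of $(\tilde{\iota}^{(i)})^{\ast}$. So it suffices to see that the induced map $\mathcal{K}'_{\oplus}(\mathcal{Q}^{(i)}_{\alpha})\to\mathcal{A}^{nil,(i)}_{zs,\alpha}$ is injective, since then the kernels correspond.

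This injectivity is the main obstacle. The direct approach is to repeat the argument of Proposition \ref{inj} on the open part. At $v=1$, compose with $Ch_{0}$ and compare with $CC$: the restrictions of the classes $[T^{\ast}_{Y}\mathbf{E}_{\alpha}]$, for components meeting $\mathbf{E}^{(i)}_{\alpha,Q,0}$, stay linearly independent in ${\rm{H}}^{BM}_{top}(\Lambda^{(i)}_{\alpha})$. Those not meeting the open set vanish, and these match exactly the elements of $\mathcal{P}_{\alpha}\setminus\mathcal{P}^{(i)}_{\alpha}$. A freeness argument over $\mathbb{Z}[v^{\pm}]$, using that the $\mathcal{P}$-basis is preserved, then lifts injectivity from $v=1$ to generic $v$.

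Since $[gr]$ is an algebra map with $[gr][\mathbf{1}_{ai}]=[\mathcal{O}_{T^{\ast}_{0}\mathbf{E}_{ai}}]$, the kernel of $(\tilde{\iota}^{(i)})^{\ast}$ is $\sum_{a}\mathcal{A}^{nil}_{zs,\alpha-ai}\ast[\mathcal{O}_{T^{\ast}_{0}\mathbf{E}_{ai,Q}}]$. The statement for ${^{(i)}\tilde{\iota}}$ and $Q'$ is proved the same way, with left ideals in place of right ideals.

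\emph{Part (2).} By part (1), $\psi_{Q}$ and $[gr]\circ\phi_{Q}$ descend to isomorphisms from $\mathbf{U}^{\mathbb{Z},(i)}_{v}(\mathfrak{n}^{+}_{Q})$, and they agree because Corollary \ref{ciso} gives $\psi=[gr]\circ\phi$. The left and right triangles therefore commute, and the same holds for $Q'$ with ${^{(i)}\mathbf{U}}$. The upper trapezoid is the commutative square relating $[\mathbf{S}_{i}]$ and $T''_{i,1}$ stated in the BGP subsection.

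The middle square is the preceding proposition, restricted along $\mathcal{K}'_{\oplus}(\mathcal{Q}^{(i)})\to\mathcal{K}_{0}(\mathcal{D}^{nil,(i)})$. This restriction is valid because objects of $\mathcal{Q}$ have singular support in $\Lambda$. It also shows that $[\mathbf{R}_{i}]$ maps $\mathcal{A}^{nil,(i)}_{zs,\alpha}$ onto ${^{(i)}\mathcal{A}}^{nil}_{zs,\beta}$.

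The lower trapezoid then follows by chasing the diagram: $\mathbf{R}_{i}\psi_{Q}=\mathbf{R}_{i}[gr]\phi_{Q}=[gr][\mathbf{S}_{i}]\phi_{Q}=[gr]\phi_{Q'}T''_{i,1}=\psi_{Q'}T''_{i,1}$. All arrows are isomorphisms because $T''_{i,1}$, $\phi$ and $\psi$ are.
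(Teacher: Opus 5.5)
Your skeleton is the paper's. Its entire proof is ``apply Corollary~\ref{ciso}'', meaning: combine $\psi=[gr]\circ\phi$ with the $[\mathbf{S}_{i}]$/$T''_{i,1}$ square and the preceding proposition. Your diagram chase for (2) is exactly that, including the observation that $[\mathbf{R}_{i}]$ carries $\mathcal{A}^{nil,(i)}_{zs,\alpha}$ onto ${^{(i)}\mathcal{A}}^{nil}_{zs,\beta}$. For (1) you correctly isolate what the one-line proof leaves implicit. The kernel statement is equivalent to injectivity of the restricted map $[gr]\colon\mathcal{K}'_{\oplus}(\mathcal{Q}^{(i)}_{\alpha,Q})\to\mathcal{A}^{nil,(i)}_{zs,\alpha}$. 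The paper never proves this; it simply builds it into its assertion that the vertical arrows in (2) are isomorphisms. Your reduction of (1) to this statement is sound.

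The gap is in your sketch of that injectivity, at the passage from $v=1$ to $\mathbb{Z}[v^{\pm}]$. Freeness of the source and preservation of the $\mathcal{P}$-basis do not suffice. A $\mathbb{Z}[v^{\pm}]$-linear map out of a free module can be injective after setting $v=1$ and still have a kernel, as soon as the target has $(v-1)$-torsion; for example, $\mathbb{Z}[v^{\pm}]\to\mathbb{Z}[v^{\pm}]/(v-1)$. Because the source is free, injectivity after $\otimes\,\mathbb{Q}(v)$ would be enough. But deducing it from the $v=1$ statement requires that $K_{\mathbf{G}_{\alpha}\times\mathbb{C}^{\times}}(\Lambda^{(i)}_{\alpha,Q})$, or at least the image of $[gr]$ in it, has no $(v-1)$-torsion. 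Neither your argument nor the paper establishes this. The proof of Proposition~\ref{inj} makes the same leap, so you inherit the problem rather than introduce it. Still, a complete proof needs that torsion-freeness, or an argument carried out directly over $\mathbb{Z}[v^{\pm}]$.

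A smaller point at $v=1$: $CC(L)$ is in general not a single component. So your ``match'' should be stated as the rank equality $|\mathcal{P}_{\alpha}\setminus\mathcal{P}^{(i)}_{\alpha}|=\#\{Z\in\mathrm{Irr}(\Lambda_{\alpha}) : Z\cap\Lambda^{(i)}_{\alpha,Q}=\emptyset\}$. Given this equality, injectivity at $v=1$ follows from the full $CC$ isomorphism together with the support inclusion: the $CC(L)$ with $L\in\mathcal{P}_{\alpha}\setminus\mathcal{P}^{(i)}_{\alpha}$ lie in the span of the components missing the open set. The equality itself is not automatic. It comes from the crystal-type count of both sides via the stratification by $\dim\ker x^{(i)}$, and it should be cited rather than asserted.
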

We remark that after taking $rat$ and $Ch_{0}$ as the proof of Proposition \ref{inj}, the corollary above degenerates to \cite[Corollary II.7.11]{Coha26} at $v=1$.

\end{spacing}

\bibliography{ref.bib}
\end{document}